\documentclass[11pt,twoside,reqno,a4paper]{amsart}

\usepackage{microtype}
\usepackage{cite}
\usepackage[OT1]{fontenc}
\usepackage{lmodern}
\usepackage{type1cm}
\usepackage{mathtools}
\usepackage{amssymb}
\usepackage[shortlabels]{enumitem}
\IfFormatAtLeastTF{2026-06-01}{}{\usepackage{aliascnt}}
\usepackage[a4paper,margin=1in]{geometry}
\usepackage{xcolor}
\usepackage{graphicx}
\graphicspath{{graphics/}}
\usepackage{hyperref}
\hypersetup{colorlinks=true,linkcolor=black,citecolor=black,urlcolor=black}
\usepackage[capitalize,noabbrev,nameinlink]{cleveref}

\numberwithin{equation}{section}
\allowdisplaybreaks

\makeatletter
\renewcommand{\paragraph}{\@startsection{paragraph}{4}%
  \z@\medskipamount{-\fontdimen2\font}%
  {\normalfont\bfseries}}
\makeatother

\theoremstyle{plain}
\newtheorem{theorem}{Theorem}[section]
\makeatletter
\IfFormatAtLeastTF{2026-06-01}{%
  \newtheorem{proposition}[theorem]{Proposition}
  \newtheorem{lemma}[theorem]{Lemma}
  \newtheorem{corollary}[theorem]{Corollary}
  \theoremstyle{definition}
  \newtheorem{definition}[theorem]{Definition}
  \theoremstyle{remark}
  \newtheorem{remark}[theorem]{Remark}
  \let\cref@orig@resetby\cref@resetby
  \def\cref@resetby#1#2{%
    \@ifundefined{alias@ctr@#1}{\cref@orig@resetby{#1}{#2}}{%
      \edef\cref@alias@target{\csname alias@ctr@#1\endcsname}%
      \expandafter\cref@orig@resetby\expandafter{\cref@alias@target}{#2}}}%
}{%
  \newaliascnt{proposition}{theorem}
  \newtheorem{proposition}[proposition]{Proposition}
  \aliascntresetthe{proposition}
  \newaliascnt{lemma}{theorem}
  \newtheorem{lemma}[lemma]{Lemma}
  \aliascntresetthe{lemma}
  \newaliascnt{corollary}{theorem}
  \newtheorem{corollary}[corollary]{Corollary}
  \aliascntresetthe{corollary}
  \theoremstyle{definition}
  \newaliascnt{definition}{theorem}
  
  \aliascntresetthe{definition}
  \theoremstyle{remark}
  \newaliascnt{remark}{theorem}
  
  \aliascntresetthe{remark}%
}
\makeatother

\crefname{proposition}{Proposition}{Propositions}
\Crefname{proposition}{Proposition}{Propositions}
\crefname{lemma}{Lemma}{Lemmas}
\Crefname{lemma}{Lemma}{Lemmas}
\crefname{corollary}{Corollary}{Corollaries}
\Crefname{corollary}{Corollary}{Corollaries}
\crefname{definition}{Definition}{Definitions}
\Crefname{definition}{Definition}{Definitions}
\crefname{remark}{Remark}{Remarks}
\Crefname{remark}{Remark}{Remarks}

\newcommand{\bbR}{\mathbb{R}}
\newcommand{\bbQ}{\mathbb{Q}}
\newcommand{\bbZ}{\mathbb{Z}}
\newcommand{\bbC}{\mathbb{C}}
\newcommand{\bbE}{\mathbb{E}}
\newcommand{\bbP}{\mathbb{P}}
\newcommand{\cC}{\mathcal{C}}
\newcommand{\cA}{\mathcal{A}}
\newcommand{\cE}{\mathcal{E}}
\newcommand{\cK}{\mathcal{K}}
\newcommand{\cP}{\mathcal{P}}

\newcommand{\cL}{\mathcal{L}}
\newcommand{\cM}{\mathcal{M}}
\newcommand{\cT}{\mathcal{T}}
\newcommand{\cW}{\mathcal{W}}
\newcommand{\eps}{\varepsilon}
\newcommand{\dd}{\,\mathrm{d}}

\renewcommand{\leq}{\leqslant}
\renewcommand{\geq}{\geqslant}

\DeclarePairedDelimiter{\abs}{\lvert}{\rvert}
\DeclarePairedDelimiter{\norm}{\lVert}{\rVert}
\DeclarePairedDelimiter{\set}{\lbrace}{\rbrace}

\DeclareMathOperator{\dist}{dist}
\DeclareMathOperator{\diam}{diam}
\DeclareMathOperator{\Lip}{Lip}
\DeclareMathOperator{\Law}{Law}
\DeclareMathOperator{\Var}{Var}

\newcommand{\BM}{W}
\newcommand{\KA}{K_A}
\newcommand{\KB}{K_B}
\newcommand{\muA}{\mu_A}
\newcommand{\muB}{\mu_B}

\begin{document}

\title{Distinguishing sets and measures from their planar Brownian images}

\author{Sascha Troscheit}
\address{Department of Mathematics, Box 524, 751 20 Uppsala University, Sweden}
\email{sascha.troscheit@math.uu.se}

\begin{abstract}
Motivated by a question of Hochman, we investigate whether two deterministic time measures or time sets
can be distinguished almost surely from a single planar Brownian image. Using rescaled
correlation integrals, we give a criterion for mutual singularity of the laws of the image
measures, even when the time measures have the same dimension. For self-similar time sets 
satisfying the open set condition, we also recover the natural
occupation measure from the compact image. Together, these results allow us to
distinguish lattice and non-lattice self-similar sets and their natural measures within this class.
For a naturally parametrised family with two contractions, with fixed dimension, 
each parameter can be distinguished from all but finitely many others. If the dimension is
sufficiently small, every distinct parameter can be distinguished.
The results are formalised in Lean.
\end{abstract}

\maketitle

\section{Introduction}

Let $\BM=(\BM_t)_{t\in[0,1]}$ be standard Brownian motion in $\bbR^2$. If $\mu$ is a Borel
probability measure with support $K\subseteq[0,1]$, then $\nu=\BM_*\mu$ is a random probability
measure on $\bbR^2$, while $E=\BM(K)$ is a random compact set.
Given two prescribed deterministic time measures, can we identify which one produced a single
observed Brownian pushforward, with probability one? Equivalently, are their corresponding laws
mutually singular? Further, is it possible to distinguish between different self-similar sets
$K_A,K_B$ of the line, purely by almost sure images under $W$?
The Hausdorff dimension of these objects doubles and can be used to crudely distinguish between
different sets. However, when $\mu_A,\mu_B$ or $K_A,K_B$ are different measures or self-similar sets of
the same dimension, these questions are much more intricate.
This `inverse problem' was posed to the author by M.~Hochman\footnote{M.~Hochman asked whether one
  can distinguish the planar Brownian images of two self-similar sets, one
  homogeneous and one inhomogeneous. He posed the question to the author at the \emph{Geometry of
  Deterministic and Random Fractals} conference at the R\'enyi Institute in 2022 and more generally in the
  problem session at the \emph{Smooth Dynamics, Random Walks, and Fractal Geometry} workshop at the Institut Mittag-Leffler in June 2026.} and we resolve it here for a
large class of measures and sets.

For a probability measure $\nu$ on $\bbR^2$, write
\begin{equation}\label{eq:correlation-integral}
  \cC_r(\nu)=(\nu\times\nu)\set{(x,y)\in\bbR^2\times\bbR^2\colon\abs{x-y}<r}
\end{equation}
for its \emph{correlation integral}.
Equivalently, $\cC_r(\nu)=\int\nu(B(x,r))\dd\nu(x)$, with $B(x,r)$ denoting
the open ball in $\bbR^2$.

\subsection{Distinguishing image measures}
Let $0<s<1$. 
For an $s$-Frostman probability measure $\mu$ on $[0,1]$, we define the
\emph{normalised expected correlation profile}, or \emph{correlation profile} for short,
\begin{equation}\label{eq:expected-profile}
  H_\mu^s(t)=e^{2st}\bbE \cC_{e^{-t}}(\BM_*\mu),
  \qquad t\geq0,
\end{equation}
where the expectation is taken with respect to the law of the Brownian motion.
We show

\begin{theorem}\label{thm:main}
Let $0<s<1$, and let $\mu_1$ and $\mu_2$ be $s$-Frostman probability measures on $[0,1]$.
If
\begin{equation}\label{eq:profile-separation}
  \limsup_{t\to\infty}\abs{H_{\mu_1}^s(t)-H_{\mu_2}^s(t)}>0,
\end{equation}
then
\[
  \Law(\BM_*\mu_1)\perp\Law(\BM_*\mu_2)
  \quad\text{on }\cP(\bbR^2).
\]
\end{theorem}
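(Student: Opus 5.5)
The strategy is to find a measurable functional of the image measure $\nu=\BM_*\mu$ that converges almost surely to a deterministic limit depending only on $\mu$, along a suitable sequence of scales. Consider the random normalised correlation integrals
\[
  X_t(\nu)=e^{2st}\cC_{e^{-t}}(\nu),
\]
each of which is a Borel function of $\nu\in\cP(\bbR^2)$. Hypothesis \eqref{eq:profile-separation} gives $\delta>0$ and times $t_k\to\infty$ with $\abs{H_{\mu_1}^s(t_k)-H_{\mu_2}^s(t_k)}\geq\delta$. We may pass to a subsequence that grows as fast as we like, say $t_{k+1}\geq t_k+k$. If we can show
\[
  X_{t_k}(\BM_*\mu_i)-H_{\mu_i}^s(t_k)\to0 \quad\text{almost surely}
\]
for $i=1,2$, the proof finishes as follows. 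Pass to a further subsequence along which $H_{\mu_1}^s(t_k)\to a$ and $H_{\mu_2}^s(t_k)\to b$ with $\abs{a-b}\geq\delta$; this is possible once we know the profiles are bounded. Then the Borel set
\[
  \set{\nu\colon X_{t_k}(\nu)\to a}
\]
has full $\Law(\BM_*\mu_1)$-measure and zero $\Law(\BM_*\mu_2)$-measure, which is the mutual singularity.

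\textbf{Boundedness of the profile.} Write
\[
  \bbE\,\cC_r(\BM_*\mu)=\iint \bbP\bigl(\abs{\BM_u-\BM_v}<r\bigr)\dd\mu(u)\dd\mu(v).
\]
The probability equals $1-e^{-r^2/(2\abs{u-v})}$, which is at most $\min\bigl(1,r^2/(2\abs{u-v})\bigr)$. The $s$-Frostman property, together with a dyadic decomposition in $\abs{u-v}$, then gives $\bbE\,\cC_r\lesssim r^{2s}$, because $s<1$ makes the tail sum converge. So $H^s_\mu$ is bounded.

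\textbf{Almost sure convergence (the main obstacle).} This is a second-moment concentration estimate:
\[
  \Var\bigl(X_t(\BM_*\mu)\bigr)\to0
\]
at a summable rate along a sparse sequence. Expanding,
\[
  \Var\,\cC_r=\iiiint \mathrm{Cov}\bigl(\mathbf 1\{\abs{\BM_u-\BM_v}<r\},\mathbf 1\{\abs{\BM_{u'}-\BM_{v'}}<r\}\bigr)\dd\mu^{4}.
\]
If the intervals $[u,v]$ and $[u',v']$ are disjoint, the increments are independent and the covariance vanishes. The contribution therefore comes only from overlapping or nested pairs of intervals.

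For such configurations we bound the joint probability by a Gaussian density estimate. For nested or overlapping intervals one conditions on the shared increment, which yields a bound of the type
\[
  \frac{r^4}{\abs{u-v}\,\abs{u'-v'}}\wedge\cdots
\]
up to a factor depending on the overlap geometry. We then integrate against $\mu^4$ using Frostman estimates at dyadic scales, aiming for
\[
  \Var\,\cC_r\lesssim r^{4s}\cdot r^{\eta}
\]
for some $\eta>0$. The expected mechanism is that overlap forces $u'$ close to $[u,v]$, gaining an extra factor comparable to $\abs{u-v}^{s}$ relative to independence, and in the short-interval regime $\abs{u-v}\approx r^2$ this becomes a power of $r$. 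I expect this bookkeeping, particularly the case where one tiny interval sits inside a long one, to be the delicate step. If the gain is only logarithmic, sparseness of $t_k$ still allows Borel--Cantelli provided the bound is $o(1)$ uniformly in $t$, since we may choose a subsequence on which the variances are summable.

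Given the variance bound, Chebyshev's inequality and Borel--Cantelli along $t_k$ yield the almost sure convergence, and the singularity follows as above.
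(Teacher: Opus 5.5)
Your reduction is sound, and in one respect it is leaner than the paper's. You only need $X_{t_k}(\BM_*\mu_i)-H^s_{\mu_i}(t_k)\to0$ along a single deterministic sequence, so $\Var(X_t)\to0$ with no rate already suffices after passing to a subsequence with summable variances. This lets you skip the grid-plus-monotonicity interpolation and the uniform continuity of $H^s_\mu$ that the paper uses to build pair-independent test sets $\cE_i$. Your boundedness argument for $H^s_\mu$ and your final Borel set are fine.

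The genuine gap is that the one substantive estimate in the theorem, $\Var(\cC_r(\BM_*\mu))=o(r^{4s})$, is not proved. The bound you suggest for it is also false. When $I$ and $I'$ nearly coincide, the joint probability is close to $\bbP(\abs{Z}<r)\approx r^2/(2\abs{I})$, which is far larger than $r^4/(\abs{I}\abs{I'})$ once $\abs{I}\gg r^2$. The ``factor depending on the overlap geometry'' is exactly the missing content. Likewise, your heuristic that overlap forces $u'$ near $[u,v]$ fails in the nested configuration you flag, where $u'$ can be far away.

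The paper closes this in \cref{thm:four-point-integral} by parametrising with $a=\abs{I\cap I'}$ and $b=\abs{I\mathbin{\triangle}I'}$.
\begin{itemize}
\item Each coordinate pair $(Z_k,Z'_k)$ is Gaussian with covariance determinant $\abs{I}\abs{I'}-a^2=ab+(\abs{I}-a)(\abs{I'}-a)\geq ab$. Hence $\bbP(\abs{Z}<r,\abs{Z'}<r)\leq r^4/(4ab)$.
\item For the longer interval, $\bbP(\abs{Z}<r)\leq r^2/(2\abs{I})\leq r^2/(a+b)$. Combining the two gives the bound $\min\{1,r^2/a\}\min\{1,r^2/b\}$.
\item On the measure side, the ordered endpoints have consecutive gaps $<b$, $<a$, $<b$. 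The Frostman bound then gives $\mu^4\set{\abs{I\cap I'}<a,\ \abs{I\mathbin{\triangle}I'}<b}\lesssim a^sb^{2s}$.
\end{itemize}
The factor $b^{2s}$ is what handles a short interval inside a long one: containment forces \emph{both} outer endpoints within $b$ of the inner interval, costing $b^{2s}$ rather than the $b^s$ of an unconstrained pair. Integrating the product bound against this distribution yields $r^{6s}$, $r^3(1+\log(1/r))$ or $r^{2s+2}$ for $s<1/2$, $s=1/2$, $s>1/2$ respectively, each $o(r^{4s})$. Without an estimate of this kind, your Chebyshev and Borel--Cantelli step has no input.
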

Thus \cref{thm:main} allows us to distinguish almost surely from one measured image, but it is not a general
reconstruction theorem for an arbitrary source measure. 

For a fixed measure $\mu$, there is at most one exponent $s$ for which $H_\mu^s(t)$ remains bounded
above and bounded away from zero as $t\to\infty$. For an $s$-Ahlfors regular measure, this holds
at its regularity exponent. Measures with different `optimal' Frostman exponents can therefore
already be distinguished from a single measured Brownian image.
For two Ahlfors regular time measures with a common exponent $s$, Falconer and
Xiao~\cite[Corollary~2.2]{FalconerXiao2014} show that the generalised $L^q$-dimensions of the
Brownian image measures are almost surely $2s$ for every $q>1$.
Both compact Brownian images are also Salem sets of dimension $2s$ almost surely, by Kahane's theorem on Brownian
images~\cite[Chapter~17]{Kahane1985}, see also \cite[Chapter~12]{Mattila2015}.

For an $s$-Ahlfors regular time measure, $2s$ is thus the exact correlation dimension of its
Brownian image measure:
\[
  D_2(\BM_*\mu)=\lim_{r\downarrow0}\frac{\log\cC_r(\BM_*\mu)}{\log r}=2s
  \qquad\text{almost surely},
\]
see Sauer and Yorke~\cite[Proposition~2.3]{SauerYorke1997}.
We study the exact scaling behaviour of $r^{-s}\cC_r(\mu)$ at the correlation dimension. This
is analogous to studying local densities rather than local dimensions of measures. There is a
further link to local dimension:
Writing 
\[
  r^{-2s}\cC_r(\nu)=\int r^{-2s}\nu(B(x,r))\dd\nu(x)
\]
shows that the normalised correlation
integral is the expectation of local density ratios. Studying these local behaviour reveals more
about the original measure, and allows us to differentiate between different ones.

\paragraph{Self-similar sets.}
A self-similar iterated function system (IFS) on $[0,1]$ is a finite family $\{S_i\}_{i=1}^N$ of
contracting similarities of $\bbR$ that map $[0,1]$ into itself.
It is well-known that there exists a unique non-empty compact invariant set $K$ satisfying
$K=\bigcup_i S_i(K)$. Similarly, for every probability vector $(p_i)$ there exists an invariant
probability measure $\mu=\sum_i p_i(S_i)_*\mu$. We refer to \cite{Falconer1997} for more
details. The IFS satisfies the \emph{open set condition} if there is a non-empty bounded open
set $U\subset\bbR$ such that $S_i(U)\subseteq U$ for every $i$ and the sets $S_i(U)$ are
pairwise disjoint. The open set need not be an interval, and the intervals $S_i([0,1])$ may
overlap. By Schief~\cite{Schief1994}, this is equivalent to the \emph{strong open set condition} and
we may choose such a set $U$ with $U\cap K\neq\varnothing$.

The contraction ratios $r_i = |S_i'(0)|$ determine whether the IFS is of \emph{lattice type} or
\emph{non-lattice type}. In the lattice case, the additive subgroup generated by
$\{\log(1/r_i)\}_{i=1}^N$ is $\ell\bbZ$ for some $\ell>0$ and
the ratios are integer powers of a common ratio $e^{-\ell}$. In the non-lattice
case, this subgroup is dense in $\bbR$. We use the same terminology for the associated
self-similar sets and measures as well.

Suppose that the IFS satisfies the open set condition and has similarity dimension
$s\in(0,1)$, determined by $\sum_i r_i^s=1$. Its natural self-similar probability measure
$\mu$ has weights $r_i^s$ and is $s$-Ahlfors regular~\cite{Falconer1997}, and hence
$s$-Frostman. At this dimension, its correlation profile has the following asymptotic behaviour.

\begin{theorem}\label{thm:profile-asymptotics}
Let $\mu$ be the natural self-similar probability measure of an IFS satisfying the open set
condition, with similarity dimension $s\in(0,1)$.
If the IFS is of lattice type with span $\ell$, then there is a continuous, strictly positive,
non-constant $\ell/2$-periodic function $\widetilde H_\mu^s$ such that
\[
  H_\mu^s(t)-\widetilde H_\mu^s(t)\to0
  \qquad\text{as }t\to\infty.
\]
If the IFS is of non-lattice type, then
\[
  H_\mu^s(t)\to C_\mu\in(0,\infty)
  \qquad\text{as }t\to\infty.
\]
\end{theorem}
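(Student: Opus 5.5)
First compute the expectation explicitly. For fixed $u,v$, the increment $\BM_u-\BM_v$ is a centred planar Gaussian with covariance $\abs{u-v}I$, so
\[
  \bbP(\abs{\BM_u-\BM_v}<r)=1-e^{-r^2/(2\abs{u-v})}=:\phi\bigl(r^2/\abs{u-v}\bigr),
\]
with $\phi(x)=1-e^{-x/2}$ (and $\phi=1$ on the diagonal, which has $\mu\times\mu$-measure zero because $\mu$ is non-atomic). By Fubini, $\bbE\cC_r(\BM_*\mu)=\int\!\!\int\phi(r^2/\abs{u-v})\dd\mu(u)\dd\mu(v)$. Substituting $r=e^{-t}$ and $\rho=r^2=e^{-2t}$, I get $H_\mu^s(t)=G(2t)$, where
\[
  G(\tau)=e^{s\tau}\int\!\!\int\phi\bigl(e^{-\tau}/\abs{u-v}\bigr)\dd\mu\dd\mu .
\]
So it suffices to show that $G$ is asymptotically $\ell$-periodic (lattice case) or convergent (non-lattice case). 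Since $\phi$ is bounded, increasing, and $\phi(x)\leq x/2$, the $s$-Ahlfors regularity of $\mu$ gives $0<\inf G\leq\sup G<\infty$ for large $\tau$. I obtain this by splitting according to dyadic shells of $\abs{u-v}$ relative to $e^{-\tau}$; because $s<1$, the tail $\sum_k 2^{-k}2^{ks}$ converges.

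Next I derive a renewal equation. Write $F(\rho)=\int\!\!\int\phi(\rho/\abs{u-v})\dd\mu\dd\mu$ and apply self-similarity, $\mu\times\mu=\sum_{i,j}r_i^sr_j^s(S_i\times S_j)_*(\mu\times\mu)$. The diagonal terms $i=j$ give $\sum_i r_i^{2s}F(\rho/r_i)$, because $\abs{S_iu-S_iv}=r_i\abs{u-v}$. Hence
\[
  F(\rho)=\sum_i r_i^{2s}F(\rho/r_i)+E(\rho),
\]
where $E$ collects the off-diagonal pairs $i\neq j$. In $G$-variables, with $\tau=-\log\rho$, this becomes
\[
  G(\tau)=\sum_i r_i^{s}\,G(\tau-\log(1/r_i))+z(\tau),\qquad z(\tau)=e^{s\tau}E(e^{-\tau}).
\]
This is a renewal equation whose probability weights $r_i^s$ sit at the atoms $\log(1/r_i)$. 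The key fact I then need is that $z$ is directly Riemann integrable, or in the lattice case that $\sum_n\abs{z(\tau+n\ell)}$ converges uniformly. This step is the main obstacle. Off-diagonal pairs are not uniformly separated, because the cylinders $S_i(K)$ may touch or the intervals may overlap. I would first try a crude bound: $\abs{z(\tau)}\leq e^{s\tau}\cdot\min(1,\ldots)$, using that the $(\mu\times\mu)$-mass of $\{(u,v)\in S_iK\times S_jK:\abs{u-v}<\delta\}$ is $O(\delta^{s+\eta})$ for some $\eta>0$. That would give exponential decay $e^{-\eta\tau}$, while for $\tau\to-\infty$ the factor $e^{s\tau}$ takes care of the other end. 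I would get the $\eta$ gain from the strong open set condition (Schief), by iterating: near-touching mass between distinct cylinders is carried by sub-cylinders near the boundary of $U$, which contain a geometrically decreasing proportion of the mass. If this fails, the fallback is to iterate the equation only to a stopping level $\abs{u-v}\approx e^{-\tau}$ and to control the error by Ahlfors regularity together with the factor $\phi(x)\leq x/2$.

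With the renewal equation in hand, I apply the renewal theorem (Feller; in the form used for fractal curvature and Minkowski contents by Lalley and Gatzouras). In the non-lattice case, $G(\tau)\to\int z\big/\sum_i r_i^s\log(1/r_i)=:C$, and $C>0$ by the lower bound on $G$. In the lattice case, $G(\tau)-\widetilde G(\tau)\to0$ with $\widetilde G(\tau)=\ell\sum_{n\in\bbZ}z(\tau+n\ell)\big/\sum_i r_i^s\log(1/r_i)$, which is continuous, $\ell$-periodic and positive. Setting $\widetilde H_\mu^s(t)=\widetilde G(2t)$ then gives an $\ell/2$-periodic limit.

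It remains to show that $\widetilde H$ is non-constant in the lattice case, and I expect this to be a second delicate point. The plan is to argue by contradiction. If $\widetilde G$ were constant, then the Laplace transform $\int\!\!\int\abs{u-v}^{-w}\dd\mu\dd\mu$, which is a meromorphic function of $w$ obtained from the renewal equation, would have only the pole at $w=s$ on the line $\Re w=s$. In the lattice case, however, the renewal equation produces candidate poles at $s+2\pi ik/\ell$, and their residues are proportional to the Fourier coefficients $\hat z(2\pi k/\ell)$ multiplied by the Mellin factor $\int_0^\infty\phi(x)x^{-1-s-2\pi ik/\ell}\dd x=c\,\Gamma(-s-2\pi ik/\ell)$-type, which never vanishes. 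So I would need to show that some Fourier coefficient of the periodised Riesz-type energy is nonzero. I would try to prove this from the discreteness of the distance set at the lattice scales $e^{-n\ell}$, using that the gaps of $K$ are fixed multiples of $e^{-n\ell}$.
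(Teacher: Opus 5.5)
Your treatment of the asymptotics is sound, and it is a mild variant of the paper's. You run the renewal equation directly on the Gaussian-smoothed profile, with the smoothed cross term as defect. The paper instead runs it on $G(t)=e^{st}\cC_{e^{-t}}(\mu)$ and then applies the smoothing operator. Your defect decays by the same bound $\cC^\times_\delta(\mu)\leq C\delta^{s+\eta}$; your SOSC sketch is essentially Lalley's estimate, which the paper cites. Your Mellin-factor remark is the paper's non-vanishing Gamma multiplier. It correctly reduces non-constancy of $\widetilde H_\mu^s$ to showing that the unsmoothed periodisation
\[
  \Psi(\delta)=\frac{\ell}{m}\sum_{k\in\bbZ}e^{sk\ell}\,\cC^\times_{\delta e^{-k\ell}}(\mu)=\delta^s\widetilde G(-\log\delta),
  \qquad m=\sum_i r_i^s\log(1/r_i),
\]
is not of the form $c_0\delta^s$. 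Equivalently, you need $\int_0^1\delta^{-s+2\pi ik/\ell}\dd\cC^\times_\delta(\mu)\neq0$ for some $k\neq0$.

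That last step is the substantive part of the lattice case, and you do not supply it. The route you suggest would not work, for three reasons.
\begin{itemize}
\item Under the open set condition, first-level pieces may touch or overlap, so there need be no gaps at all (digits $\{0,1,4\}$ in base five already touch).
\item The distance set is not discrete in general: for the middle-third Cantor set it is all of $[0,1]$.
\item Even under strong separation, knowing where $\dd\cC^\times_\delta(\mu)$ is supported does not exclude $\Psi=c_0\delta^s$. For the middle-third set, the rescaled terms of $\Psi$ increase on the intervals $[3^{k-1},3^k]$, which tile $(0,\infty)$.
\end{itemize}

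What you need is a distance $\rho>0$ at which the cross-distance distribution has a singularity of order $s<1$. The paper gets it from the pair $S_{ij},S_{ji}$ with $i\neq j$. These maps have the same ratio $r_ir_j$ and the same orientation, so they differ only by a translation:
\[
  S_{ij}(x)-S_{ji}(y)=S_{ij}(0)-S_{ji}(0)+\varepsilon r_ir_j(x-y)
\]
with a common sign $\varepsilon$. Moreover $\rho=\abs{S_{ij}(0)-S_{ji}(0)}>0$, since $S_{ij}(U)\subset S_i(U)$ and $S_{ji}(U)\subset S_j(U)$ are disjoint.

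Expanding the $(i,j)$ cross term one level further, pairs with $\abs{x-y}<\delta$ contribute mass at least $p_i^2p_j^2\cC_\delta(\mu)\geq c\delta^s$ at distances within $r_ir_j\delta$ of $\rho$. Every term of $\Psi$ is non-decreasing in $\delta$. Hence the increment of $\Psi$ across this window of width $2r_ir_j\delta$ is at least $\frac{\ell}{m}c\delta^s$. If $\Psi=c_0\delta^s$, that increment would be $O(\delta)$, which is impossible for small $\delta$ because $s<1$.

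Without an argument of this kind, one that transplants the $\delta^s$ singularity of $\cC_\delta(\mu)$ at $0$ to a positive distance, the non-constancy of $\widetilde H_\mu^s$ remains unproved.
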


Combining \cref{thm:main,thm:profile-asymptotics} gives the following consequence.

\begin{corollary}\label{thm:cantor-application}
Let $\muA$ and $\muB$ be the natural self-similar probability measures of two IFSs on $[0,1]$
satisfying the open set condition, with common similarity dimension $s\in(0,1)$. If the first
IFS is of lattice type and the second is of non-lattice type, then
\[
  \Law(\BM_*\muA)\perp\Law(\BM_*\muB).
\]
\end{corollary}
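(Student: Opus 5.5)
The plan is to verify the hypotheses of \cref{thm:main} for the pair $\muA,\muB$ using \cref{thm:profile-asymptotics}. First, both natural measures are $s$-Ahlfors regular, as recalled just before \cref{thm:profile-asymptotics}, and hence $s$-Frostman with the common exponent $s$. So $H^s_{\muA}$ and $H^s_{\muB}$ are defined, and \cref{thm:main} applies as soon as \eqref{eq:profile-separation} holds.

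By \cref{thm:profile-asymptotics}, $H^s_{\muA}(t)-\widetilde H(t)\to0$, where $\widetilde H=\widetilde H^s_{\muA}$ is continuous, $\ell/2$-periodic and non-constant. On the other side, $H^s_{\muB}(t)\to C=C_{\muB}\in(0,\infty)$. Because $\widetilde H$ is non-constant, there is a point $t_0$ with $\widetilde H(t_0)\neq C$; set $\delta=\abs{\widetilde H(t_0)-C}>0$. Now take the sequence $t_n=t_0+n\ell/2\to\infty$. Periodicity gives $\widetilde H(t_n)=\widetilde H(t_0)$. Combining this with the triangle inequality,
\[
\abs{H^s_{\muA}(t_n)-H^s_{\muB}(t_n)}\geq \delta-\abs{H^s_{\muA}(t_n)-\widetilde H(t_n)}-\abs{H^s_{\muB}(t_n)-C}\to\delta .
\]
Hence the $\limsup$ in \eqref{eq:profile-separation} is at least $\delta>0$, and \cref{thm:main} yields $\Law(\BM_*\muA)\perp\Law(\BM_*\muB)$.

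There is no real obstacle here. The only point needing care is that a non-constant function cannot coincide with the constant $C$, so some $t_0$ with $\widetilde H(t_0)\neq C$ always exists. Continuity of $\widetilde H$ is not even needed for this argument. All the analytic content sits in \cref{thm:main,thm:profile-asymptotics}, which we take as given.
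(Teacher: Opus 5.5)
Your proof is correct and matches the paper's argument: \cref{thm:profile-asymptotics} gives a non-constant periodic asymptotic profile for $H^s_{\muA}$ and a constant limit for $H^s_{\muB}$, so \cref{eq:profile-separation} holds and \cref{thm:main} applies. Your triangle-inequality argument along $t_n=t_0+n\ell/2$ simply spells out the step the paper leaves implicit.
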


\paragraph{A two-contraction family.}
We can show substantially more for a family of self-similar measures with two contractions,
without requiring the systems to have different lattice types. Fix $0<s<1$. For $p\in(0,1)$,
put $q=1-p$, $\lambda_1=p^{1/s}$ and $\lambda_2=q^{1/s}$, and let $K_p$ be the attractor of
\[
  S_1(x)=\lambda_1x,
  \qquad S_2(x)=\lambda_2x+1-\lambda_2.
\]
We write $\mu_p$ for its natural self-similar probability measure with weights $p$ and $q$.
Since $\lambda_1+\lambda_2<p+q=1$, the IFS is strongly separated and $\mu_p$ is $s$-Ahlfors
regular. Reflection in $1/2$ interchanges $p$ and $1-p$ and preserves correlation integrals,
so we restrict comparisons to $p\in(0,1/2]$. The homogeneous system occurs at $p=1/2$.
\Cref{fig:two-time-sets} shows the Brownian images of two time sets of the family.

\begin{figure}[t]
  \centering
  \includegraphics[width=0.48\textwidth]{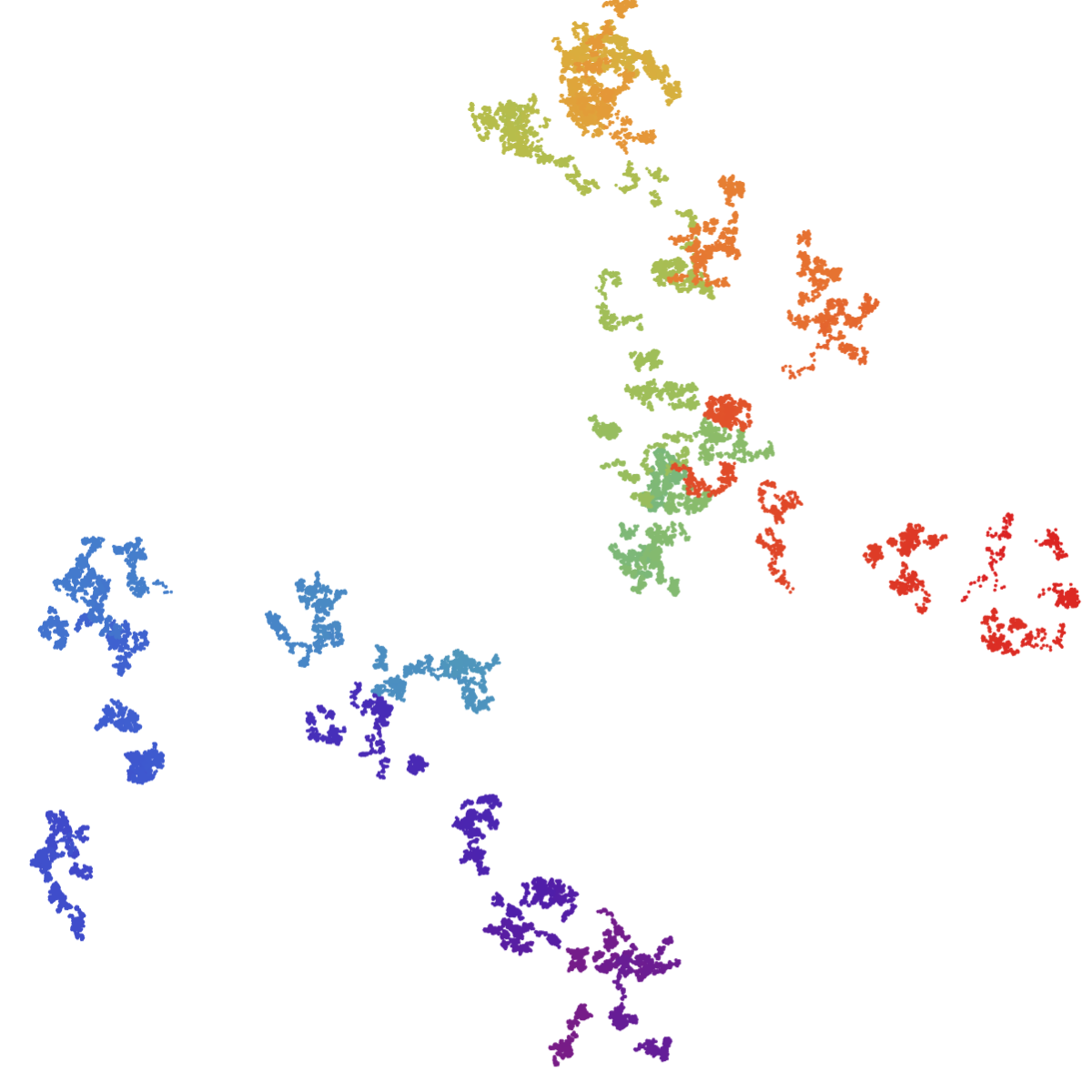}\hfill
  \includegraphics[width=0.48\textwidth]{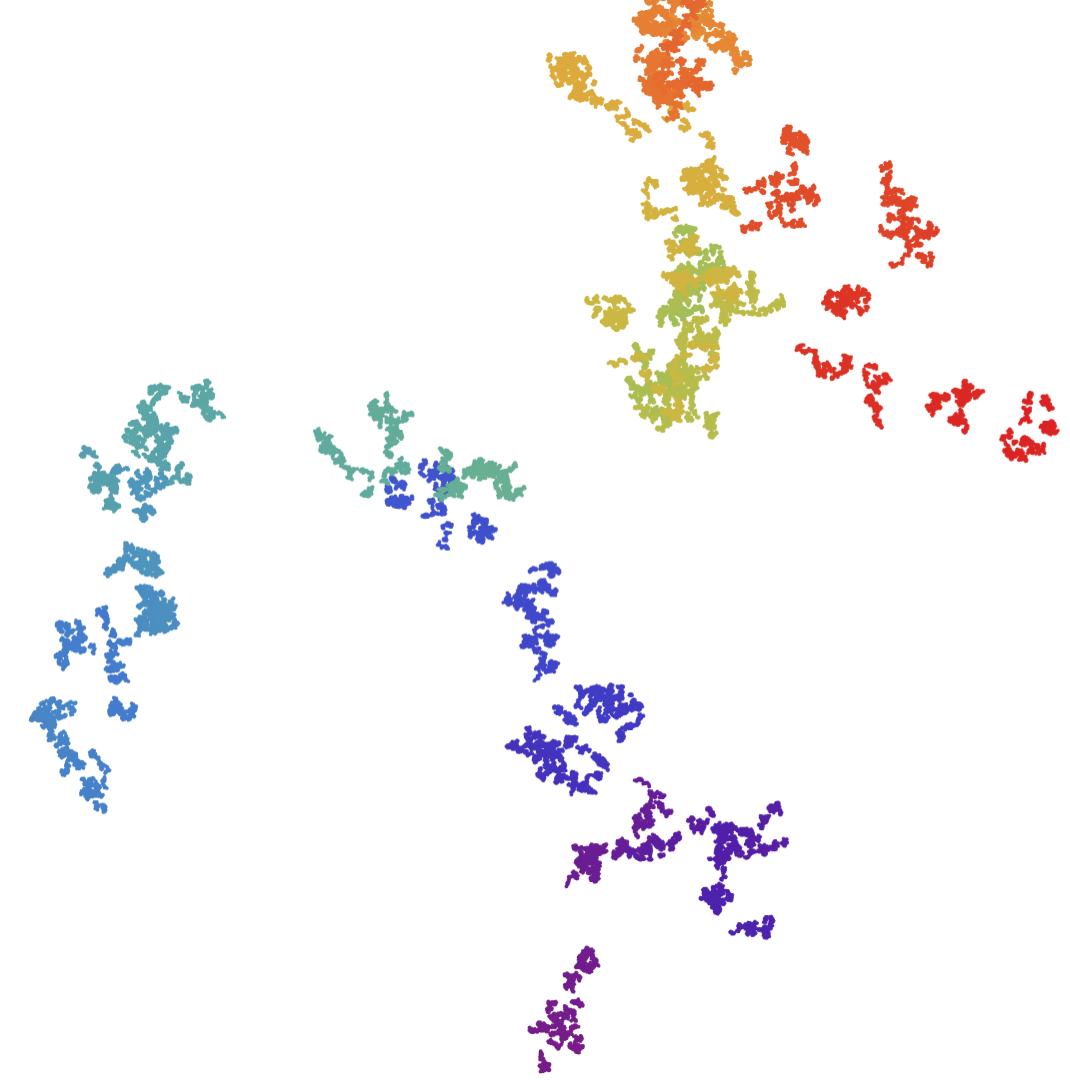}
  \caption{Brownian images $\BM(K_{p_1})$ (left) and $\BM(K_{p_2})$ (right) of two time sets of
  the two-contraction family, coloured by time, for $s=4/5$, $p_1=2/5$, and $p_2=1/2$.}
      \label{fig:two-time-sets}
\end{figure}

We write
\[
  \overline H_s(p)=\lim_{R\to\infty}\frac1R\int_0^R H_{\mu_p}^s(t)\dd t,
\]
for the mean correlation profile,
whose existence follows from \cref{thm:fixed-dimension-profiles}. By
\cref{thm:fixed-dimension-analyticity,thm:fixed-dimension-range}, it is positive and real
analytic on $(0,1)$ and tends to zero as $p\downarrow0$. We write $D_s$ for the set of its
critical points in $(0,1/2)$.

\begin{theorem}\label{thm:fixed-dimension-separation}
Let $s\in(0,1)$. For every $p_1\in(0,1/2]$, the set of $p_2\in(0,1/2]$ with
$\overline H_s(p_2)=\overline H_s(p_1)$ is finite. Consequently, for $p_2\in(0,1/2]$ not in this
exceptional set,
$\overline H_s(p_1)\neq\overline H_s(p_2)$, and
\[
  \Law(\BM_*\mu_{p_1})\perp\Law(\BM_*\mu_{p_2}).
\]
The set $D_s$ has no accumulation point in $(0,1/2]$, and the mean is strictly monotone on
every component of $(0,1/2)\setminus D_s$.
\end{theorem}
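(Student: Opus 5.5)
The plan is to combine three facts about $\overline H_s$ that the text attributes to \cref{thm:fixed-dimension-profiles,thm:fixed-dimension-analyticity,thm:fixed-dimension-range}, which I take as available: the Ces\`aro mean $\overline H_s(p)$ exists, it is real analytic and positive on $(0,1)$, and $\overline H_s(p)\to0$ as $p\downarrow0$. I will also use the reflection symmetry $\overline H_s(p)=\overline H_s(1-p)$, which holds because reflection in $1/2$ interchanges $\mu_p$ and $\mu_{1-p}$ and preserves correlation integrals. The finiteness claim then comes from a standard zero-counting argument for analytic functions, and the singularity claim comes from \cref{thm:main}.

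\emph{Step 1: finiteness of level sets.} Fix $p_1\in(0,1/2]$ and put $c=\overline H_s(p_1)>0$. Define $g(p)=\overline H_s(p)-c$, which is real analytic on $(0,1)$. First, $g$ is not identically zero, since $g(p)\to-c<0$ as $p\downarrow0$. Second, the zeros of $g$ in $(0,1/2]$ cannot accumulate at $0$, because there exists $\delta>0$ with $\overline H_s<c/2$ on $(0,\delta)$. Third, they cannot accumulate at any point of $(0,1/2]$: such a point lies in the open interval $(0,1)$ where $g$ is analytic, so by the identity theorem accumulation there would force $g\equiv0$. Hence the zeros lie in the compact set $[\delta,1/2]$ and have no accumulation point, so there are finitely many.

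\emph{Step 2: critical points and monotonicity.} The same argument applies to $g'=\overline H_s{}'$. It is analytic on $(0,1)$, and it is not identically zero because $\overline H_s$ is not constant (it tends to $0$ yet is positive). So $D_s$ has no accumulation point in $(0,1)$, and in particular none in $(0,1/2]$; accumulation at $1/2$ is covered since $1/2$ is an interior point of $(0,1)$. On each component of $(0,1/2)\setminus D_s$, the derivative $g'$ is continuous and nonvanishing, so by the intermediate value theorem it has constant sign and the mean is strictly monotone there. Note that $D_s$ may still accumulate at $0$; the statement does not exclude this.

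\emph{Step 3: singularity.} For $p_2$ outside the finite exceptional set, $\overline H_s(p_1)\ne\overline H_s(p_2)$. If we had $\limsup_{t\to\infty}|H_{\mu_{p_1}}^s(t)-H_{\mu_{p_2}}^s(t)|=0$, the difference would tend to $0$ and so would its Ces\`aro means, forcing the two means to be equal, a contradiction. Therefore \eqref{eq:profile-separation} holds. Both $\mu_{p_i}$ are $s$-Ahlfors regular, hence $s$-Frostman, so \cref{thm:main} gives $\Law(\BM_*\mu_{p_1})\perp\Law(\BM_*\mu_{p_2})$.

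The substantive content is entirely in the cited analyticity and limit results. Given those, the only point needing care is the endpoint behaviour near $0$, and this is handled by the limit $\overline H_s(p)\to0$ together with $c>0$. If analyticity were only known on $(0,1/2]$ rather than $(0,1)$, accumulation at $1/2$ would instead be handled by the reflection symmetry, which extends the function analytically across $1/2$.
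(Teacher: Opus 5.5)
Your proposal is correct and follows essentially the same route as the paper. Analyticity and non-constancy of $\overline H_s$ on $(0,1)$ give isolated zeros of $\overline H_s'$ and of $\overline H_s-\overline H_s(p_1)$. Positivity together with the limit $\overline H_s(p)\to0$ as $p\downarrow0$ confines each level set to a compact subinterval, so it is finite. Distinct Ces\`aro means force \eqref{eq:profile-separation}, and then \cref{thm:main} applies.
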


Moreover, for some $\eps_s\in(0,1/2)$, the mean is strictly monotone
on $(1/2-\eps_s,1/2)$, and the image-measure laws in this interval are pairwise mutually
singular. Numerical evaluation of \cref{eq:fixed-dimension-mean} suggests that the mean is
strictly increasing on $(0,1/2)$ for every $s\in(0,1)$, which would separate every pair of
distinct parameters. The stronger assertion that $D_s$ is empty is also supported numerically, but
we do not prove either full claim here.

However, we prove strict increase throughout the parameter range for all sufficiently small dimensions.

\begin{theorem}\label{thm:small-dimension-monotone}
There exists $s_0\in(0,1)$ such that, for every $s\in(0,s_0]$, the mean
$p\mapsto\overline H_s(p)$ is strictly increasing on $(0,1/2]$. Consequently, for all distinct
$p_1,p_2\in(0,1/2]$,
\[
  \Law(\BM_*\mu_{p_1})\perp\Law(\BM_*\mu_{p_2}).
\]
\end{theorem}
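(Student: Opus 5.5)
We reduce the statement to a uniform derivative estimate for an explicit formula for the mean profile, in the limit $s\downarrow0$. The second assertion then follows from the first. If $\overline H_s$ is strictly increasing on $(0,1/2]$, then for distinct $p_1,p_2$ we have $\overline H_s(p_1)\neq\overline H_s(p_2)$. Since $\overline H_s(p)$ is the Cesàro mean of $H_{\mu_p}^s$, two profiles with different Cesàro means cannot have difference tending to zero. Hence \eqref{eq:profile-separation} holds, and \cref{thm:main} gives mutual singularity. The work is therefore the monotonicity.

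First, we use the explicit representation of the mean profile from \cref{thm:fixed-dimension-profiles}, i.e. \eqref{eq:fixed-dimension-mean}. Via the renewal theorem (Cesàro-averaging in the lattice case), the mean should have the form
\[
\overline H_s(p)=\frac{1}{\chi(p)}\,\Phi_s(p),\qquad \chi(p)=-\tfrac1s\bigl(p\log p+q\log q\bigr).
\]
Here $\chi$ is the mean logarithmic contraction under the natural weights. The term $\Phi_s(p)$ is an integral over $t\in\bbR$ of a "boundary" correction $e^{2st}(\bbE\cC_{e^{-t}}(\BM_*\mu_p)-\sum_i p_i^2\,\bbE\cC_{e^{-t}/\sqrt{\lambda_i}}(\cdots))$. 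This correction comes from the cross terms $\mu_p|_{S_1K}\times\mu_p|_{S_2K}$, which involve Brownian increments across the gap $[\lambda_1,1-\lambda_2]$.

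Using $\bbE\,\mathbf 1\{|\BM_u-\BM_v|<r\}=1-e^{-r^2/(2|u-v|)}$, each correlation integral becomes $\iint g_r(|u-v|)\dd\mu\dd\mu$. We then integrate in $t$ first. For a pair at time distance $d$, the $t$-integral of $e^{2st}(1-e^{-e^{-2t}/(2d)})$ equals $c_s\,d^{s}$, with $c_s=\tfrac12\Gamma(1-s)2^{-s}$ up to normalisation. This turns the mean into an explicit energy-type expression:
\[
\overline H_s(p)=\frac{c_s}{\chi(p)}\,2pq\iint |u-v|^{s}\,\dd\mu_p|_{[0,\lambda_1]}(u)\,\dd\mu_p|_{[1-\lambda_2,1]}(v)\,/(pq)\;+\;\text{(renewal correction)}.
\]
The renewal correction may cancel or renormalise to a Riesz-type energy $I_{-s}$ difference. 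We would verify the exact form against \eqref{eq:fixed-dimension-mean} rather than rederive it.

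Second, we take the limit $s\downarrow0$ with a careful normalisation. As $s\to0$, we have $\lambda_i=p_i^{1/s}\to0$ extremely fast, so the two pieces of $K_p$ collapse to the points $0$ and $1$. Hence $|u-v|^{s}\to1$ uniformly on cross pairs, while $c_s\to c_0$ and $s\chi(p)/s$ is the entropy $h(p)=-p\log p-q\log q$ divided by $s$. The leading behaviour should be $\overline H_s(p)\approx C\,s\cdot pq/h(p)$ (times a normalisation), together with an $O(s)$-relative error. The error must be controlled in $C^1$ on $(0,1/2]$, uniformly, including near $p\downarrow0$. The model function $F(p)=pq/h(p)$ is strictly increasing on $(0,1/2]$; this is an elementary calculus check using $F'(p)\propto (1-2p)h(p)-pq\log(q/p)$, which is positive for $p<1/2$. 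At $p=1/2$ it vanishes to first order by symmetry, so we also need the second-order sign.

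The main obstacle is the uniform $C^1$ perturbation estimate. Near $p=1/2$ the derivative of the model vanishes, so the error must be shown to be $o$ of the model's second derivative there. We would handle this by symmetry: $\overline H_s$ is symmetric under $p\mapsto1-p$, so its derivative is odd about $1/2$. We then compare second derivatives on $(1/2-\eps,1/2]$, using the interval provided by the remark after \cref{thm:fixed-dimension-separation} for small $s$. Near $p\to0$ we would use the decay from \cref{thm:fixed-dimension-range}, and compare logarithmic derivatives instead of derivatives. On the compact middle range, real analyticity from \cref{thm:fixed-dimension-analyticity} and uniform convergence of derivatives (from the explicit integral, by dominated convergence in $s$) give the result.
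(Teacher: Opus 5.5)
\textbf{Verdict: genuine gap.} Your reduction is essentially the paper's. By \cref{eq:fixed-dimension-mean}, $\overline H_s(p)=2^{1-s}\Gamma(1-s)\,\frac{pq}{h(p)}\,J(p)$ with $J(p)=\bbE\bigl[(S_2(Y)-S_1(X))^{-s}\bigr]$. The model $pq/h(p)$ has logarithmic derivative $\kappa>0$ on $(0,1/2)$, and by \cref{eq:log-derivative-mean} it suffices to prove $\kappa(p)>\abs{J'(p)}$ there.

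Note that the exponent is $-s$, not $+s$: the $t$-integral of $e^{2st}(1-e^{-e^{-2t}/(2d)})$ equals $\frac{2^{-s}\Gamma(1-s)}{2s}d^{-s}$. This matters, because $J\geq1$ is what lets you replace $J'/J$ by $J'$.

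The problem is that the whole content of the proof is the derivative estimate you set aside as ``the main obstacle''. None of the tools you name gives it uniformly in $s$:
\begin{enumerate}[(i)]
\item \Cref{thm:fixed-dimension-analyticity} is qualitative, for each fixed $s$.
\item Dominated convergence in $s$ says nothing about $p$-derivatives, because $p$ enters through the measure $\mu_p$ itself (weights and contraction ratios). You would first need a representation with $p$ in the integrand, such as the level sums of \cref{thm:level-sums}, and then bounds on the differentiated series.
\item The limit $\overline H_s(p)\to0$ from \cref{thm:fixed-dimension-range} carries no derivative information.
\item The interval $(1/2-\eps_s,1/2)$ from the remark is not uniform in $s$ and does not give the direction of monotonicity. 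So it cannot be glued to a middle-range $C^1$ comparison, and such a comparison fails anyway as $p\to1/2$, where $\kappa$ vanishes.
\end{enumerate}

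Your claim that $\abs{u-v}^{\pm s}\to1$ uniformly on cross pairs is also false uniformly in $p$. For fixed $s$, $\lambda_2=(1-p)^{1/s}\to1$ as $p\downarrow0$, so the gap $1-\lambda_1-\lambda_2\approx p/s$ closes. In fact $J'(p)$ is unbounded as $p\downarrow0$: the main term has derivative of size comparable to $(s/p)^s$. So $J\to1$ in $C^1$ is not uniform down to $p=0$, and near $0$ you must beat $\kappa(p)\geq0.88/(p(1-\log p))$ with an explicit bound.

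The missing idea is quantitative complex-analytic control, which the paper supplies in two regimes.
\begin{enumerate}[(i)]
\item Near $0$, it splits off the explicit main term $J_0$ of \cref{eq:main-term}, which comes from the number of initial symbols $2$ in \cref{eq:initial-symbol-decomposition}. It bounds $\abs{J_0'(p)}\leq6p^{-s}$ by sum--integral comparison. It bounds $\abs{J'-J_0'}\leq3000\,s(3p/2)^{1/s}p^{-2-s}$ by extending holomorphically to $\abs{z-p}<p/2$ and applying Cauchy's estimate (\cref{thm:main-term-derivative,thm:remainder-derivative}).
\item On $[10^{-4},1/2)$, it extends $J$ to discs of fixed radius $5\cdot10^{-5}$, on which $\abs{\lambda_i}\leq\gamma=(1-5\cdot10^{-5})^{1/s}$. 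This gives $\sup\abs{J-1}\leq32s\gamma$, hence $\abs{J''}\leq10^{11}s\gamma$ (\cref{thm:second-derivative-small-s}). Together with $J'(1/2)=0$, this yields $\abs{J'(p)}\leq10^{11}s\gamma(1/2-p)<c(1/2-p)\leq\kappa(p)$ once $s$ is small, since $s\gamma\to0$.
\end{enumerate}
Your symmetry and second-derivative idea near $1/2$ and the log-derivative comparison near $0$ have the right shape. Without bounds of this kind, though, the monotonicity is not proved.
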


By the symmetry $\overline H_s(p)=\overline H_s(1-p)$, the mean then attains its maximum over
$(0,1)$ only at the homogeneous parameter $p=1/2$. For each fixed $0<s\leq1/2$, the estimates
also give a positive derivative near $p=0$. The remaining critical points lie in a compact
subinterval of $(0,1/2]$ and cannot accumulate by \cref{thm:fixed-dimension-separation}, giving
the following consequence.

\begin{corollary}\label{thm:critical-set-finite}
For every $s\in(0,1/2]$ there exists $p_0\in(0,1/2)$ such that $\overline H_s$ is strictly
increasing on $(0,p_0]$. In particular, the critical set $D_s$ is finite.
\end{corollary}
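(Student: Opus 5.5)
The statement to prove is \cref{thm:critical-set-finite}: for $s\in(0,1/2]$, $\overline H_s$ is strictly increasing on some $(0,p_0]$, and consequently $D_s$ is finite. The plan has two parts. First, establish a positive derivative of $\overline H_s$ on an interval $(0,p_0]$. Second, combine this with the discreteness of $D_s$ from \cref{thm:fixed-dimension-separation} and a compactness argument.

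\textbf{Step 1: positive derivative near $p=0$.} I would start from the explicit expression for the mean, \cref{eq:fixed-dimension-mean}, together with the real analyticity of $\overline H_s$ on $(0,1)$ given by \cref{thm:fixed-dimension-analyticity}. The estimates underlying the proof of \cref{thm:small-dimension-monotone} are used there for small $s$. For fixed $s\le1/2$, I would apply them only in the regime $p\downarrow0$. There $\lambda_1=p^{1/s}$ is very small, so the first piece $S_1(K_p)$ is tiny compared with $S_2(K_p)$.

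I would expand $\partial_p\overline H_s(p)$ as a leading term plus error terms. The leading term should come from the dominant self-interaction of the large piece. The correction from the small piece scales like $p^{2}$ times a factor $\lambda_1^{-2s}$-type renormalisation. The cross-correlation between the two pieces should contribute terms of order $p\,q$ times a bounded kernel.

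The key point is that the restriction $s\le1/2$ should make the error terms involving $\lambda_1^{1-2s}$ or $p^{(1-2s)/s}$ bounded or vanishing as $p\to0$. Once they are controlled, the leading positive contribution dominates. This is consistent with $\overline H_s(p)\to0$ as $p\downarrow0$, from \cref{thm:fixed-dimension-range}, combined with positivity. From this I would obtain $p_0$ with $\overline H_s'(p)>0$ on $(0,p_0]$, which gives strict increase there.

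\textbf{Step 2: finiteness of $D_s$.} By Step 1, every critical point in $D_s$ lies in $[p_0,1/2)$. I would now argue by contradiction. If $D_s$ were infinite, then by compactness of $[p_0,1/2]$ it would have an accumulation point in $[p_0,1/2]\subseteq(0,1/2]$. This contradicts \cref{thm:fixed-dimension-separation}, which says $D_s$ has no accumulation point in $(0,1/2]$. Hence $D_s$ is finite.

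\textbf{Main obstacle.} The hard part is Step 1: proving that the derivative is positive uniformly on a whole interval $(0,p_0]$, not merely that $\overline H_s$ increases somewhere near $0$. My first approach would be to isolate the leading asymptotic of $\overline H_s'(p)$ as $p\to0$, for instance a term of order $p^{\alpha}$ with an explicit positive constant. I would then bound every remaining term by $o(p^{\alpha})$, using $s\le 1/2$ to ensure that the singular kernel integrals $\int|u|^{-2s}\dots$ stay controlled.

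If that expansion proves awkward, the fallback is a two-step argument. First show that $\overline H_s(p)/p^{\alpha}$ tends to a positive limit. Then use analyticity to rule out infinitely many sign changes of $\overline H_s'$ near $0$. The second step needs a non-oscillation input, for example that $\overline H_s'$ admits an asymptotic expansion in powers of $p$ or $p^{1/s}$ with a nonzero leading coefficient.
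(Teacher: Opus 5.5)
Your Step~2 is the paper's compactness argument and is fine. The gap is Step~1, which carries the entire content of the corollary. You never give a mechanism for the positive derivative, and the expansion you sketch does not match the structure of \cref{eq:fixed-dimension-mean}. That formula contains no self-interaction term. It reads $\overline H_s(p)=2^{1-s}\Gamma(1-s)\,\frac{pq}{h(p)}\,J(p)$, so by \cref{eq:log-derivative-mean} one has $\frac{d}{dp}\log\overline H_s=\kappa+J'/J$. The positive leading term is the entropy factor: $\kappa(p)\geq0.88/(p(1-\log p))$ by \cref{thm:entropy-factor}, and this blows up as $p\downarrow0$. What has to be shown is that $J'/J$ is negligible against it.

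Your remark that the small-dimension estimates should be reused as $p\downarrow0$ is the right instinct. The missing observation is that \cref{thm:main-term-derivative,thm:remainder-derivative} already hold for every $s\leq1/2$ and $p\leq1/50$, not only for small $s$. Together with $J\geq1$ they give \cref{eq:small-p-margin}. Its error term $\bigl(6p^{1-s}+3000\,s\,(3/2)^{1/s}p^{1/s-1-s}\bigr)(1-\log p)$ tends to $0$ because $1-s>0$ and $1/s-1-s\geq1/2$. This is where $s\leq1/2$ is used. It does not enter through a factor like $\lambda_1^{1-2s}=p^{(1-2s)/s}$, which equals $1$ at $s=1/2$ and would not vanish.

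Your proposed routes would also fail as stated. Near $0$ one has $1\leq J(p)\leq C$ (see the proof of \cref{thm:fixed-dimension-range}) and $h(p)\sim p\log(1/p)$. Hence $\overline H_s(p)\asymp1/\log(1/p)$ and $\overline H_s'(p)\asymp1/(p\log^2(1/p))$, so no exponent $\alpha$ makes $\overline H_s(p)/p^\alpha$ tend to a positive finite limit. Real analyticity on the open interval $(0,1)$ says nothing about zeros of $\overline H_s'$ accumulating at the endpoint $0$, and that is exactly what must be ruled out. Likewise, $\overline H_s\to0$ together with $\overline H_s>0$ does not exclude oscillation. Without a quantitative lower bound on the log-derivative such as \cref{eq:small-p-margin}, Step~1 is not established.
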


\paragraph{Missing digit set}
\cref{thm:main} also allows us to distinguish IFSs within the same classes, such as for IFSs with
the same number of maps with the same contraction ratios. In
\cref{thm:base-five-profiles}, we compare the base-five missing-digit sets with digit sets
$\{0,1,4\}$ and $\{0,2,4\}$, equipped with equal weights. Both have dimension
$\log3/\log5$ and the same logarithmic period, but their limiting normalised correlation profiles
differ.
One such difference is that the first does not satisfy the strong separation property (images of
$[0,1]$ meet at a point), whereas the second is strongly separated.

The complementary example in \cref{thm:homometric-example} has non-isometric attractors whose
natural measures have identical correlation integrals, so their expected correlation profiles do
not distinguish the Brownian image laws.

\subsection{Distinguishing image sets}
A measured image contains significantly more information than just the support of the measure.
Hence, \cref{thm:main} does not by itself
imply the corresponding statement for the compact image sets.
However, we may recover this
distribution from small neighbourhoods of the image for self-similar sets.

Let $\cK(\bbR^2)$ be the space of
non-empty compact subsets of $\bbR^2$, equipped with the Hausdorff metric. For
$F\in\cK(\bbR^2)$ and $r>0$,
put
\begin{equation}\label{eq:neighbourhood-measure-definition}
  M_r(F)=\int_{\bbR^2}\left(1-\frac{\dist(x,F)}r\right)_+\dd x,
  \qquad
  \dd\nu_{F,r}(x)=\frac{1}{M_r(F)}
  \left(1-\frac{\dist(x,F)}r\right)_+\dd x.
\end{equation}
Thus $\nu_{F,r}$ is a probability measure determined by the compact set $F$.

\begin{theorem}\label{thm:minkowski-reconstruction}
Let $\{S_i\}_{i=1}^N$ be a self-similar IFS on $[0,1]$ satisfying the open set condition,
with contraction ratios $r_i$ and attractor $K$. Assume that its
similarity dimension $s$ lies in $(0,1)$, and let $\mu$ be the self-similar probability measure
with weights $r_i^s$. Then
\begin{equation}\label{eq:minkowski-reconstruction}
  \nu_{\BM(K),e^{-n}}\to \BM_*\mu
  \quad\text{weakly almost surely as }n\to\infty.
\end{equation}
In particular, $\BM_*\mu$ is almost surely a Borel function of the compact set $\BM(K)$.
\end{theorem}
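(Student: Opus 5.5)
We need $\nu_{W(K),r}\to W_*\mu$ weakly almost surely along $r=e^{-n}$. The plan is to compare, cylinder by cylinder, the weight that $\nu_{F,r}$ gives to the neighbourhood of each piece $W(S_\omega(K))$ with $\mu(S_\omega K)=r_\omega^s$.

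\textbf{Step 1: reduce to cylinders.} Fix a depth-$k$ partition of $K$ into cylinders $K_\omega=S_\omega(K)$, with $|\omega|=k$. Since $W$ is uniformly continuous, $\operatorname{diam} W(K_\omega)\to0$ uniformly as $k\to\infty$. Weak convergence then follows if, for each fixed $k$, almost surely
\[
  \frac{\abs{\{x:\dist(x,W(K_\omega))<r\}}}{M_r(W(K))}\to \mu(K_\omega)
  \quad\text{as } r=e^{-n}\to0 .
\]
I would prove this with the tent-weighted versions $M_r$ rather than bare volumes, and handle the fact that neighbourhoods of different cylinders overlap. The overlaps are negligible: $K_\omega$ and $K_{\omega'}$ are at positive time distance after passing to the strong open set condition (Schief) and a suitable subfamily. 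Distinct Brownian pieces on time intervals that are not adjacent are at positive distance almost surely. Adjacent pieces, which may touch at a point, contribute $o(M_r)$ by the estimate in Step 2.

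\textbf{Step 2: Minkowski content of Brownian images of self-similar sets.} The core claim is
\[
  r^{2s-2}M_r(W(K))\,/\,L_r\to1
\]
almost surely, where $L_r$ is a random but cylinder-additive quantity. Concretely, I would aim to show that for every cylinder
\[
  \frac{M_r(W(K_\omega))}{M_r(W(K))}\to r_\omega^s .
\]
By Brownian scaling, $W(K_\omega)$ has the law of $r_\omega^{1/2}\widetilde W(K)$ with $\widetilde W$ a Brownian motion, up to translation. Hence
\[
  M_r(W(K_\omega))\overset{d}{=} r_\omega\, M_{r r_\omega^{-1/2}}(\widetilde W(K)).
\]
Write $X(r)=r^{2s-2}M_r(W(K))$. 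Then $r^{2s-2}M_r(W(K_\omega))\overset{d}{=} r_\omega^{s} X(r r_\omega^{-1/2})$, which has the correct weight $r_\omega^s$ provided $X$ has the same asymptotics at scales $r$ and $r r_\omega^{-1/2}$.

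To obtain this, I would decompose $K$ into all cylinders of length roughly $r^2$, that is a stopping set with $r_\omega\approx r^2$. This gives
\[
  M_r(W(K))=\sum_\omega M_r(W(K_\omega))-(\text{overlaps}),
\]
where the summands are nearly independent: the increments of $W$ over disjoint time intervals are independent and each summand is translation invariant. A law of large numbers, using second moments, shows that the sum concentrates around its mean. The number of pieces is about $r^{-2s}$, so the variance is $O(r^{2s})$ relative to the squared mean. This is summable along $r=e^{-n}$, and Borel–Cantelli gives almost sure convergence.

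The same argument applied within a fixed $K_\omega$ gives
\[
  M_r(W(K_\omega))\approx r_\omega^s\,\bbE\text{-profile}(r),
\]
so the ratio converges even though the mean profile itself may oscillate log-periodically in the lattice case. This is why I work with ratios and never need a limit of $X$ itself. The restriction to $r=e^{-n}$ is only needed for summability.

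\textbf{Main obstacle: the overlap terms.} The hardest step is bounding the overlaps, that is, the expected area of
\[
  N_r(W(K_\omega))\cap N_r(W(K_{\omega'}))
\]
summed over pairs, together with the covariances between pieces that are not independent. Brownian motion returns near itself, so distant cylinders overlap with probability governed by the planar hitting estimate of order $r^{2}/|t-t'|$ for the relevant density. I would bound the overlap sum by an integral of the form
\[
  \iint |t-t'|^{-1}\, r^{2}\dd\mu(t)\dd\mu(t')
\]
times scale factors, using the Frostman property with $s<1$ to keep it $O(r^{2-2s}\cdot o(1))$.

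I expect this to require the two-point Gaussian density bound, the same one used for the correlation profile in \cref{thm:main}. The logarithmic losses in two dimensions are the main technical danger. If they appear, I would absorb them using the slack available from $s<1$.
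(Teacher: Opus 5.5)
Your architecture matches the paper's: cylinder ratios, Brownian scaling $r^{2s-2}M_r(\BM(S_wK))\overset{d}{=}r_w^sX(rr_w^{-1/2})$, concentration along $r=e^{-n}$, then weak convergence from uniform continuity of the path. \textbf{Gap 1: the scale shift.} The step you flag as a proviso is the crux, and it is never proved. Concentration inside $S_wK$ only gives $e^{(2-2s)t}M_{e^{-t}}(\BM(S_wK))\approx p_wQ(t-\alpha_w/2)$, where $Q(t)=e^{(2-2s)t}\bbE M_{e^{-t}}(\BM(K))$ and $\alpha_w=\log(1/r_w)$. The profile appears at the \emph{shifted} scale $rr_w^{-1/2}$, so your ratio tends to $p_wQ(t-\alpha_w/2)/Q(t)$, and in the lattice case $Q$ really does oscillate. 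Saying that ``the same argument applied within $K_\omega$'' gives the profile at scale $r$ is exactly the missing statement $Q(t-\alpha_w/2)-Q(t)\to0$. To get it, you need the approximate renewal equation $Q(t)=\sum_ip_iQ(t-\alpha_i/2)-d(t)$, which comes from $M_r(\BM(K))=\sum_iM_r(\BM(S_iK))-D_r$ and Brownian scaling, with an exponentially small defect $0\leq d(t)\leq Ce^{-2\eta t}$. The renewal theorem then gives $Q(t-u)-Q(t)\to0$ for each $u\in\set{\alpha_i/2}$; in the lattice case this works because these shifts lie in the period lattice $(\ell/2)\bbZ$ of the limit. You also need $Q\geq c>0$ before dividing. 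This does not come for free from your decomposition (see Gap 2). The paper gets it from Cauchy--Schwarz, $\cL^2(\langle\BM(K)\rangle_{r/2})\geq\pi r^2/(4\cC_r(\BM_*\mu))$, together with $\bbE\cC_r(\BM_*\mu)\leq Cr^{2s}$ and Jensen.

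\textbf{Gap 2: overlaps at your decomposition scale.} Concentrating the stopping sum over pieces of time length $\asymp r^2$ is fine. However, those pieces have spatial size $\asymp r$, the same as the neighbourhood radius, so the neighbourhoods of pieces at time distance $\lesssim r^2$ overlap in a positive proportion. With the density bound $r^4/(r^2+d_{wv})$ and $\asymp r^{-4s}h^s$ pairs at time distance $\leq h$, the total overlap is $\asymp r^{4-4s}\int_{r^2}^1h^{s-2}\dd h\asymp r^{2-2s}$, the same order as $M_r(\BM(K))$. Your integral $\iint\abs{t-t'}^{-1}\dd\mu\dd\mu$ is infinite for $s<1$. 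After truncation at $r^2$, the Frostman bound yields $O(r^{2-2s})$, not $o(r^{2-2s})$. So the overlaps cannot be discarded, and absorbing logarithms does not help. Two ideas are missing.
(i) A genuine gain for the \emph{first-level} pieces: $\bbE\cL^2(\langle\BM(S_iK)\rangle_r\cap\langle\BM(S_jK)\rangle_r)\leq Cr^{2-2s+2\eta}$. This needs the cross-term decay $\cC^\times_\delta(\mu)\leq C\delta^{s+\eta}$ for $i\neq j$, which comes from Lalley's boundary estimate under the strong open set condition, not from the Frostman property. It is also what makes $d(t)$ decay.
(ii) A two-scale scheme. Concentrate at time scale $r$ instead, where there are $\asymp r^{-s}$ pieces of spatial size $\sqrt r\gg r$ and the normalised variance is $O(r^s)$, using the $M$ classes of disjoint intervals. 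Control the total defect by telescoping over the prefix tree $\cT(r)$: each node contributes a rescaled first-level overlap of normalised expectation at most $Cr^{2\eta}p_vr_v^{-\eta}\leq Cr^\eta p_v$. Then upgrade this $L^1$ bound to $L^2$ with a fourth-moment bound and H\"older.

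Incidentally, your claim that time-separated Brownian pieces are almost surely at positive distance is false for $s>1/2$: two such images, each of dimension $2s$, meet with positive probability. Also, under the open set condition the intervals $S_i([0,1])$ may overlap. So no subfamily argument removes the overlaps; only expected-area bounds with the $\eta$ gain are available.
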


\Cref{thm:minkowski-reconstruction} shows that the compact image almost surely determines
its natural occupation measure. For measure laws, we already have the lattice/non-lattice
separation in \cref{thm:cantor-application}, so reconstruction gives the following corollary.
\begin{corollary}\label{thm:cantor-set-application}
Let $\KA$ and $\KB$ be the attractors of two IFSs on $[0,1]$ satisfying the open set condition,
with common similarity dimension $s\in(0,1)$. If the first IFS is of lattice type and the
second is of non-lattice type, then
\[
  \Law(\BM(\KA))\perp\Law(\BM(\KB))
  \quad\text{on }\cK(\bbR^2).
\]
\end{corollary}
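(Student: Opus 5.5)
The idea is to pass from compact image sets to image measures through the Borel reconstruction map of \cref{thm:minkowski-reconstruction}, and then invoke \cref{thm:cantor-application}.

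\emph{Constructing the map.} For $F\in\cK(\bbR^2)$ and $r>0$, the measure $\nu_{F,r}$ depends continuously on $F$ in the Hausdorff metric. Indeed, $x\mapsto\dist(x,F)$ changes by at most $d_H(F,F')$ uniformly in $x$, so the densities $(1-\dist(x,F)/r)_+$ vary continuously in $L^1$. Their supports stay in a bounded region, and $M_r(F)\geq\pi r^2/3>0$. Hence $F\mapsto\nu_{F,e^{-n}}$ is continuous from $\cK(\bbR^2)$ into $\cP(\bbR^2)$ with the weak topology. Fix a point $\delta_0\in\cP(\bbR^2)$ and define
\[
  \Phi(F)=\lim_{n\to\infty}\nu_{F,e^{-n}}
\]
whenever this weak limit exists, and $\Phi(F)=\delta_0$ otherwise. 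Because $\cP(\bbR^2)$ with the weak topology is Polish (for instance, metrised by the L\'evy--Prokhorov or bounded Lipschitz metric), the set of $F$ for which the sequence is Cauchy is Borel. The pointwise limit of continuous maps on a Borel set is Borel, so $\Phi$ is a Borel map $\cK(\bbR^2)\to\cP(\bbR^2)$. The same map $\Phi$ serves for both IFSs, since it does not depend on $K$.

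\emph{Transferring singularity.} By \cref{thm:minkowski-reconstruction} applied to each IFS, with $\muA,\muB$ the natural measures (weights $r_i^s$), we have $\Phi(\BM(\KA))=\BM_*\muA$ almost surely and $\Phi(\BM(\KB))=\BM_*\muB$ almost surely. Consequently
\[
  \Phi_*\Law(\BM(\KA))=\Law(\BM_*\muA)
  \quad\text{and}\quad
  \Phi_*\Law(\BM(\KB))=\Law(\BM_*\muB).
\]
By \cref{thm:cantor-application}, there is a Borel set $\cE\subseteq\cP(\bbR^2)$ with $\Law(\BM_*\muA)(\cE)=1$ and $\Law(\BM_*\muB)(\cE)=0$. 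The Borel set $\Phi^{-1}(\cE)\subseteq\cK(\bbR^2)$ then has full $\Law(\BM(\KA))$-measure and zero $\Law(\BM(\KB))$-measure, which gives the claimed mutual singularity.

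One detail must be checked: $\BM(K)$ is a random element of $\cK(\bbR^2)$, i.e.\ measurable. This holds because $\omega\mapsto\BM(K)$ is continuous from $C([0,1],\bbR^2)$ with the sup norm into $\cK(\bbR^2)$. This is routine.

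\emph{Main obstacle.} The substantive step is the Borel measurability of $\Phi$. Everything else is a direct combination of earlier results. I expect this step to be handled cleanly by the continuity of $F\mapsto\nu_{F,r}$ together with the standard fact that the convergence set of a sequence of continuous maps into a Polish space is Borel. The ``in particular'' clause of \cref{thm:minkowski-reconstruction} essentially already asserts this measurability, so it can also be cited directly.
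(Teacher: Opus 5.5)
Your proposal is correct and follows the paper's route: the paper also obtains this corollary by pulling back the separating Borel set from \cref{thm:cantor-application} through the reconstruction map $F\mapsto\lim_n\nu_{F,e^{-n}}$. In the proof of \cref{thm:minkowski-reconstruction}, that map is shown to be Borel via continuity of $F\mapsto\nu_{F,r}$ and Polishness of $\cP(\bbR^2)$, just as you argue. You also correctly point out that the same map $\Phi$, which does not depend on $K$, serves for both IFSs. This is what makes the pullback valid, and it is worth having stated explicitly.
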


The same reconstruction transfers the two-contraction results to compact image sets.

\begin{corollary}
For every $s\in(0,1)$ and all $p_1,p_2\in(0,1/2]$ with
$\overline H_s(p_1)\neq\overline H_s(p_2)$,
\[
  \Law(\BM(K_{p_1}))\perp\Law(\BM(K_{p_2}))
  \quad\text{on }\cK(\bbR^2).
\]
For each fixed $p_1$, this holds for all but finitely many $p_2\in(0,1/2]$. If $s\leq s_0$,
where $s_0$ is as in \cref{thm:small-dimension-monotone}, it holds for every $p_1\neq p_2$.
\end{corollary}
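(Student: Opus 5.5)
The proof transfers the measure-level separation to compact image sets through the reconstruction map of \cref{thm:minkowski-reconstruction}.

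\emph{Step 1: mutual singularity of the measure laws.} Put $\mu_i=\mu_{p_i}$. Since $K_{p_i}$ is strongly separated, it satisfies the open set condition, so \cref{thm:profile-asymptotics} applies to each $\mu_i$. Its conclusion is stated for the general lattice dichotomy, so we argue via Cesàro means. Suppose for contradiction that $H^s_{\mu_1}(t)-H^s_{\mu_2}(t)\to0$. Then the Cesàro means of the two profiles would coincide, giving $\overline H_s(p_1)=\overline H_s(p_2)$. This contradicts the hypothesis, so
\[
  \limsup_{t\to\infty}\abs{H^s_{\mu_1}(t)-H^s_{\mu_2}(t)}>0 .
\]
Both $\mu_i$ are $s$-Ahlfors regular, hence $s$-Frostman. \Cref{thm:main} therefore gives $\Law(\BM_*\mu_1)\perp\Law(\BM_*\mu_2)$ on $\cP(\bbR^2)$. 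Concretely, there is a Borel set $A\subseteq\cP(\bbR^2)$ with $\bbP(\BM_*\mu_1\in A)=1$ and $\bbP(\BM_*\mu_2\in A)=0$.

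\emph{Step 2: a Borel reconstruction map.} Define $\Phi\colon\cK(\bbR^2)\to\cP(\bbR^2)$ by
\[
  \Phi(F)=\lim_{n\to\infty}\nu_{F,e^{-n}}
\]
wherever this weak limit exists, and set $\Phi(F)=\delta_0$ otherwise. The first thing to check is that each map $F\mapsto\nu_{F,r}$ is continuous on $\cK(\bbR^2)$.
\begin{itemize}
\item The function $F\mapsto\dist(x,F)$ is $1$-Lipschitz in the Hausdorff metric, uniformly in $x$.
\item The integrands are therefore uniformly close, and they are supported in a bounded set that varies continuously with $F$.
\item By dominated convergence, $M_r(F)$ and $\int g\dd\nu_{F,r}$ are continuous in $F$ for every bounded continuous $g$.
\item Also $M_r(F)>0$ for every non-empty $F$.
\end{itemize}
Next, $\cP(\bbR^2)$ with the weak topology is Polish. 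Fix a countable convergence-determining family $(g_k)$. The set of $F$ on which the limit exists is then Borel, and on that set $\Phi$ is a pointwise limit of continuous maps, so $\Phi$ is Borel. By \cref{thm:minkowski-reconstruction}, applied to each IFS with weights $p_i=\lambda_i^s$ (the natural weights), $\Phi(\BM(K_{p_i}))=\BM_*\mu_i$ almost surely for $i=1,2$.

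\emph{Step 3: pulling back.} Let $B=\Phi^{-1}(A)$, a Borel subset of $\cK(\bbR^2)$. Then
\[
  \bbP(\BM(K_{p_1})\in B)=\bbP(\BM_*\mu_1\in A)=1,
  \qquad
  \bbP(\BM(K_{p_2})\in B)=0,
\]
which is the first claim. The second claim follows from \cref{thm:fixed-dimension-separation}: for fixed $p_1$, only finitely many $p_2$ satisfy $\overline H_s(p_2)=\overline H_s(p_1)$. The third follows from \cref{thm:small-dimension-monotone}: if $s\leq s_0$, strict monotonicity of $\overline H_s$ on $(0,1/2]$ gives $\overline H_s(p_1)\neq\overline H_s(p_2)$ whenever $p_1\neq p_2$.

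The main obstacle is the measurability bookkeeping in Step 2, in particular showing that $\Phi$ is Borel. The continuity of $F\mapsto\nu_{F,r}$ via the Lipschitz bound on $\dist$ is the point to verify carefully. The rest is formal.
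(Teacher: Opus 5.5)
Your proposal is correct and follows the paper's route. You obtain measure-level singularity from distinct Ces\`aro means, as in the proof of \cref{thm:fixed-dimension-separation}, transfer it to compact images by pulling back along the Borel reconstruction map of \cref{thm:minkowski-reconstruction}, and read off the finiteness and small-dimension clauses from \cref{thm:fixed-dimension-separation,thm:small-dimension-monotone}. One small tidy-up: prove that the convergence set is Borel via the Cauchy criterion in a complete metric for the weak topology on $\cP(\bbR^2)$, which is what the paper's appeal to Polishness amounts to. Convergence of $\int g_k\dd\nu_{F,e^{-n}}$ over a countable family does not by itself produce a probability-measure limit.
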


The mutual singularity of the laws of the images of the sets is equivalent to the existence of
a Borel set $\cA\subset\cK(\bbR^2)$ such that
\[
  \bbP(\BM(\KA)\in\cA)=1,
  \qquad
  \bbP(\BM(\KB)\in\cA)=0,
\]
and a single image set allows us, almost surely, to decide which of the original time sets were
given.

\subsection{Related literature}
Related reconstruction questions have appeared for other random images. Pemantle, Peres, Pitman
and Yor~\cite{PemantlePeresPitmanYor2001} proved that, in dimensions $d\geq3$, the occupation
measure of the radial projection of a Brownian path determines its range and terminal point.
Croydon~\cite{Croydon2009} proved that, in dimensions $d\geq8$, the embedded range of a Brownian
spatial tree determines the spatial tree. Duquesne~\cite{Duquesne2009} identified the occupation
measure of super-Brownian motion in dimensions $d\geq5$ with an exact packing measure carried by
its range. 

Our idea of using the decay of measure of close points in the correlation integral is very similar
in spirit to using the Minkowski content to study the size of a set. Write $\cL^d$ for the
$d$-dimensional Lebesgue measure and $\langle F\rangle_r$ for the open $r$-neighbourhood of
$F\subset\bbR^d$.
The Minkowski content, defined
by 
\[
  \cM^s(F) = \lim_{r\downarrow 0} r^{s-d}\cL^d(\langle F\rangle_r),
\]
whenever the limit exists, is particularly useful in the study of random objects that involve a Wiener process. 
Spitzer~\cite{Spitzer1964} and Le Gall~\cite{LeGall1988} show that a Brownian path in $\bbR^d$, with $d\geq3$, has positive finite
two-dimensional Minkowski content almost surely. In the plane, the ordinary two-dimensional
content is zero. However, normalising by the second order term gives convergence.
For standard planar Brownian motion run for a fixed time $T>0$, the
Wiener-sausage estimates of Le Gall~\cite[Theorem~2.1]{LeGall1988} give
\[
  \lim_{r\downarrow0}\log(1/r)\,\cL^2\bigl(\langle\BM([0,T])\rangle_r\bigr)=\pi T
  \qquad\text{almost surely}.
\]
Thus the elapsed time can be recovered from the compact range alone. In particular, the laws of
$\BM([0,T_1])$ and $\BM([0,T_2])$ are mutually singular whenever $T_1,T_2>0$ and $T_1\neq T_2$.

Our approach substitutes the Minkowski content for the correlation integral, but also
requires the set to be `subcritical' in the sense that it is of dimension $s<1$. For such sets,
their first order approximation tends to converge for stochastically self-similar sets.
Gatzouras~\cite{Gatzouras2000}, for instance, obtained almost-sure limits for non-lattice
stochastically self-similar sets and logarithmically averaged limits without the non-lattice
restriction. For deterministic self-similar subsets of the line satisfying the open set
condition, Kombrink and Winter~\cite{KombrinkWinter2020} proved that Minkowski measurability is
equivalent to non-lattice type. For random homogeneous self-similar sets satisfying the uniform
strong open set condition, Z\"ahle~\cite{Zahle2020} proved that the mean Minkowski content is
positive and finite in the non-lattice case and that the mean average Minkowski content is
positive and finite in general. These contents are normalised by the mean Minkowski dimension,
which can exceed the
almost-sure dimension. In contrast, for non-almost-deterministic IFSs satisfying uniform
cylinder separation and bounded branching, Troscheit~\cite{Troscheit2023} showed that the upper
Minkowski content and upper average Minkowski content are infinite almost surely. In the
equicontractive case, the lower Minkowski content is zero. Thus limits of expected neighbourhood
volumes need not describe individual realisations.
We use this idea of being able to distinguish sets by the existence of their Minkowski content, as
well as the regularity of Wiener processes, to distinguish their images.

More is known on the Minkowski content for other `fractal sets', where it has come in useful as a
natural parametrisation, such as for Schramm--Loewner evolution.
Lawler--Rezaei~\cite{LawlerRezaei2015} proved that the
$(1+\kappa/8)$-dimensional Minkowski content of an $\mathrm{SLE}_\kappa$ trace exists almost surely
for $0<\kappa<8$ and agrees, up to normalisation, with its natural parametrisation. This measure is
also the continuum counterpart of renormalised discrete length.
Lawler--Viklund~\cite{LawlerViklund2021} proved convergence of planar loop-erased random walk with
its renormalised length parametrisation to $\mathrm{SLE}_2$ parametrised by its $5/4$-dimensional
Minkowski content. Holden--Li--Sun~\cite{HoldenLiSun2022} proved the corresponding convergence of
the critical percolation interface to $\mathrm{SLE}_6$ parametrised by its $7/4$-dimensional
Minkowski content. These results are close in spirit to \cref{thm:minkowski-reconstruction}, where
small neighbourhoods of an unparametrised random range recover a canonical measure carried by that
range.

In dynamical systems there also exist related inverse problems.
One example is Takens' embedding
theorem~\cite{Takens1981}, which allows the reconstruction of dynamical states from finitely many
successive observations. We refer to Bara\'nski, Gutman and
\'Spiewak~\cite{BaranskiGutmanSpiewak2026} and references therein for more background on Takens' theorem.
One specific setting where the correlation integral arises is 
Caby et al.~\cite[Proposition~1 and Section~5]{CabyFarandaManticaVaientiYiou2019}, where they relate
it to hitting-time statistics and extreme close encounters under suitable
dynamical hypotheses. These `close encounters' are captured by the decay of measures of
neighbourhoods with respect to itself.

\subsection{Proof strategy and organisation}
We begin in \cref{sec:setup} by expressing the expected Brownian correlation integral in terms
of the correlation integral of the time measure. For natural self-similar measures, the latter
satisfies a renewal equation, which we study in \cref{sec:renewal}. After normalisation, it
converges to a constant in the non-lattice case and approaches a periodic function in the lattice
case. In
\cref{sec:smoothing}, we show that the Gaussian transform has non-zero Fourier multipliers and
therefore preserves non-constancy, giving the Brownian profile asymptotics in
\cref{thm:profile-asymptotics}.

To make profile separation detectable from a single image measure, we prove in
\cref{sec:variance} a four-point estimate valid for every $s$-Frostman measure. It gives
\begin{equation}\nonumber
  \Var\bigl(\cC_r(\BM_*\mu)\bigr)\leq C
  \begin{cases}
    r^{6s},&0<s<1/2,\\
    r^3(1+\log(1/r)),&s=1/2,\\
    r^{2s+2},&1/2<s<1.
  \end{cases}
\end{equation}
Each bound is $o(r^{4s})$. These bounds get applied in \cref{sec:concentration}, where a
Borel--Cantelli argument, together with monotonicity,
they give uniform almost-sure concentration of $r^{-2s}\cC_r$ about its expectation.
From this we then deduce \cref{thm:main,thm:cantor-application}.

In \cref{sec:reconstruction}, we recover the natural occupation measure from the compact image
using a separate renewal equation for planar neighbourhood area. 
\Cref{sec:examples} illustrates both the scope and the limitations of profile separation. We
compare two base-five missing-digit sets with distinct profiles, then construct non-isometric
attractors whose natural measures have identical correlation integrals.

Finally, \cref{sec:fixed-dimension} treats the two-contraction family of fixed dimension. We
compute the mean correlation profile in \cref{sec:fixed-dimension-family} and prove in
\cref{sec:fixed-dimension-separation} that it is real analytic and tends to zero as $p\downarrow0$.
Together with positivity, these properties give finite level sets and the separation result
\cref{thm:fixed-dimension-separation}. In \cref{sec:small-dimension}, we prove
strict increase throughout the parameter range for all sufficiently small dimensions, as well as
finiteness of the critical set for every $s\leq1/2$.

\subsection{Back matter}

\paragraph{Lean formalisation.}
The paper is accompanied by a formalisation in the Lean~4 theorem prover over Mathlib.
The formalisation is available at
\url{https://github.com/sascha2718/lean-brownian-images} and on
Palomar~\cite{Troscheit2026Palomar}, where it is also (machine) audited.
The Comparator audit
(\url{https://github.com/leanprover/comparator}) checks the nine numbered results of this section
in \texttt{Challenge.lean} against their proved counterparts in \texttt{Solution.lean}.
The wider development contains 136 proved statements in all and uses no unproved literature
hypotheses. We refer the reader to \texttt{paper-correspondence.yaml} for the correspondence with
the article and to \texttt{docs/correspondence.md} for the differences in the proofs. In
particular, Lean obtains the four-point bound by dyadic summation and uses a weaker numerical
bound in the small-dimension argument that gives the same conclusion.

\paragraph{Acknowledgements.}
We thank Mike Hochman for bringing this question to our attention and Mike Hochman, Adam \'Spiewak, 
Jeffrey Steif and Ville Suomala for helpful discussions.

\paragraph{Funding Information}
This research was partly financed by the Royal Swedish Academy of Sciences
(Kungl.\ Vetenskapsakademien) research stipend MA2025-0070 and the Lisa \& Carl-Gustav Esseens Fund
for Mathematics (2025--26).

\paragraph{AI statement.}
In accordance with the Leiden Declaration on Artificial Intelligence and Mathematics
(\url{https://leidendeclaration.ai}), we disclose the use of large-language-model tools.
LLM models were used heavily to draft much of the Lean code, as well as for light copy-editing the exposition.
Part of the calculations in \cref{sec:fixed-dimension} for the two contraction family of self-similar
sets were first done by LLMs and later verified using symbolic calculation software. 
The mathematical arguments and ideas themselves were human-created, 
with only minor aspects being influenced by the automated formalisation of the results.
All statements and output were revised and checked by the author, who
takes full responsibility for the accuracy of mathematical claims, citations, formal statements, and
prose.

%%%%%
%%%%%
%%%%%
\section{Frostman measures and Gaussian reduction}\label{sec:setup}

Let $0<s<1$. A probability measure $\mu$ on $[0,1]$ is called \emph{$s$-Frostman} if there is a
constant $A\geq1$ such that
\begin{equation}\label{eq:frostman}
  \mu(B(x,\rho))\leq A\rho^s
\end{equation}
for every $x\in[0,1]$ and $0<\rho\leq1$.

By the mass distribution principle, the support of an $s$-Frostman measure has Hausdorff dimension
at least $s$.
Of particular note are Ahlfors--David $s$-regular sets $F$. The $s$-Hausdorff measure $\mu$
restricted to such a set satisfies the stronger condition that $\mu(B(x,r)\cap F)\asymp r^s$
for all $x\in F$ and $0<r<\diam(F)$. Examples are self-conformal (and in particular self-similar)
sets with positive Hausdorff measure at their dimension,
see~\cite[Theorem~3.1]{AngelevskaKaenmakiTroscheit2020} and \cite{FraserHendersonOlsonRobinson2015}.

Fix an $s$-Frostman probability measure $\mu$ and a constant $A$ satisfying \cref{eq:frostman}.
We use the correlation integral notation from \cref{eq:correlation-integral} also for measures
on $[0,1]$. Thus, by Fubini's theorem and the Frostman bound,
\begin{equation}\label{eq:correlation-frostman}
  \cC_\delta(\mu)=\int_{[0,1]}\mu(B(x,\delta))\dd\mu(x)\leq A\delta^s,
  \qquad 0<\delta\leq1.
\end{equation}
The measure $\mu$ has no atoms, so another application of Fubini's theorem gives
\[
  (\mu\times\mu)\set{(x,y)\colon\abs{x-y}=\delta}
  =\int\mu\set{x-\delta,x+\delta}\dd\mu(x)=0,
  \qquad \delta\geq0.
\]
Consequently, $\delta\mapsto\cC_\delta(\mu)$ is continuous and is also the distribution
function of $\abs{X-Y}$ for independent $X,Y$ with law $\mu$. In particular, strict and
non-strict distance inequalities give the same correlation integral, with
$\cC_0(\mu)=0$ and $\cC_\delta(\mu)=1$ for $\delta\geq1$.

The law of a Brownian increment depends only on the time difference. We can therefore compute
the expected correlation integral of $\BM_*\mu$ by integrating over these time differences.

\begin{lemma}\label{thm:gaussian-reduction}
For every $r>0$,
\begin{equation}\label{eq:gaussian-reduction}
  \begin{split}
    \bbE\cC_r(\BM_*\mu)
    &=\int_{[0,1]^2}\left(1-\exp\left(-\frac{r^2}{2\abs{u-t}}\right)\right)
      \dd\mu(u)\dd\mu(t)\\
    &=\int_0^1\left(1-\exp\left(-\frac{r^2}{2\delta}\right)\right)
      \dd\cC_\delta(\mu).
  \end{split}
\end{equation}
\end{lemma}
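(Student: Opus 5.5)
The plan is to write the correlation integral of the image measure as a double integral over time. By definition of pushforward,
\[
  \cC_r(\BM_*\mu)=\int_{[0,1]^2}\mathbf 1\{\abs{\BM_u-\BM_t}<r\}\dd\mu(u)\dd\mu(t).
\]
The integrand $(\omega,u,t)\mapsto\mathbf 1\{\abs{\BM_u(\omega)-\BM_t(\omega)}<r\}$ is jointly measurable, because Brownian motion has continuous paths and is therefore a jointly measurable process. It is also nonnegative and bounded by $1$. Tonelli's theorem therefore lets us exchange the expectation with the $\mu\times\mu$ integral:
\[
  \bbE\cC_r(\BM_*\mu)=\int_{[0,1]^2}\bbP\bigl(\abs{\BM_u-\BM_t}<r\bigr)\dd\mu(u)\dd\mu(t).
\]

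Next compute the probability pointwise. For $u\neq t$, the increment $\BM_u-\BM_t$ is a centred planar Gaussian with covariance $\abs{u-t}I_2$. So $\abs{\BM_u-\BM_t}^2/\abs{u-t}$ has the $\chi^2_2$ distribution, which is exponential with mean $2$. A polar-coordinates computation then gives
\[
  \bbP\bigl(\abs{\BM_u-\BM_t}<r\bigr)=1-\exp\bigl(-r^2/(2\abs{u-t})\bigr).
\]
On the diagonal $u=t$ the probability is $1$, which matches the formula with the convention $\exp(-\infty)=0$. In any case the diagonal has $\mu\times\mu$-measure zero, since $\mu$ has no atoms, as observed above. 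This proves the first line of \cref{eq:gaussian-reduction}.

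For the second line, the integrand depends only on $\delta=\abs{u-t}$. Set $g(\delta)=1-\exp(-r^2/(2\delta))$ for $\delta>0$ and $g(0)=1$; this $g$ is bounded, Borel and continuous on $(0,1]$. The change-of-variables formula for pushforward measures gives
\[
  \int g(\abs{u-t})\dd(\mu\times\mu)=\int g\dd\Law(\abs{X-Y}),
\]
where $X,Y$ are independent with law $\mu$. As noted before the lemma, $\delta\mapsto\cC_\delta(\mu)$ is the continuous distribution function of $\abs{X-Y}$, supported in $[0,1]$, with no atom at $0$. So $\Law(\abs{X-Y})$ is exactly the Lebesgue--Stieltjes measure $\dd\cC_\delta(\mu)$ on $(0,1]$, and the value of $g$ at $0$ is irrelevant. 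This gives the Stieltjes integral $\int_0^1 g(\delta)\dd\cC_\delta(\mu)$.

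No step is a serious obstacle. The only points needing care are the measurability needed to apply Tonelli's theorem, and treating the diagonal and the endpoint $\delta=0$ correctly; both are handled by continuity of the paths and non-atomicity of $\mu$.
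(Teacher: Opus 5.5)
Your proposal is correct and follows essentially the same route as the paper: exchange the expectation with the $\mu\times\mu$ integral, compute $\bbP(\abs{\BM_u-\BM_t}<r)=1-\exp(-r^2/(2\abs{u-t}))$ off the diagonal (which is $\mu\times\mu$-null because $\mu$ is atomless), and push forward under $(u,t)\mapsto\abs{u-t}$ to get the Stieltjes form. Your extra care about joint measurability for Tonelli and about the point $\delta=0$ only makes explicit what the paper's shorter proof leaves implicit.
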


\begin{proof}
For $u\neq t$, the increment $\BM_u-\BM_t$ is a centred planar Gaussian with covariance
$\abs{u-t}I_2$. Polar integration gives
\[
  \bbP(\abs{\BM_u-\BM_t}<r)=1-\exp\left(-\frac{r^2}{2\abs{u-t}}\right).
\]
Since $\mu$ is atomless, this holds for $\mu\times\mu$-almost every $(u,t)$.
Integrating with respect to $\mu\times\mu$ and applying Fubini's theorem gives the first
identity, while pushing $\mu\times\mu$ forward by $(u,t)\mapsto\abs{u-t}$ gives the second.
\end{proof}

To compare the correlation integrals before and after applying Brownian motion, we introduce the
\emph{normalised correlation profile} of $\mu$,
\[
  G(t)=e^{st}\cC_{e^{-t}}(\mu),\qquad t\in\bbR.
\]
This is continuous, and \cref{eq:correlation-frostman} gives $G(t)\leq A$ for $t\geq0$, while
$G(t)=e^{st}$ for $t\leq0$. In particular, $G$ is bounded on $\bbR$.
Recall from \cref{eq:expected-profile} that
$H_\mu^s(t)=e^{2st}\bbE\cC_{e^{-t}}(\BM_*\mu)$ is the normalised expected correlation
profile. We extend this definition to all $t\in\bbR$.

Since $\cC_\delta(\mu)=1$ for $\delta\geq1$, we may extend the Stieltjes integral in
\cref{eq:gaussian-reduction} to $[0,\infty)$. The boundary terms in integration by parts
vanish: $\cC_\delta(\mu)\to0$ as $\delta\downarrow0$, and
$1-\exp(-r^2/(2\delta))\to0$ as $\delta\to\infty$. Hence
\[
  \bbE\cC_r(\BM_*\mu)
  =\frac{r^2}{2}\int_0^\infty\delta^{-2}
    \exp\left(-\frac{r^2}{2\delta}\right)\cC_\delta(\mu)\dd\delta.
\]
Substituting $\delta=r^2\eta$ and writing $t=\log(1/r)$ now gives
\begin{equation}\label{eq:h-definition}
  H_\mu^s(t)=\int_0^\infty \phi(\eta)G(2t-\log\eta)\dd\eta,
  \qquad
  \phi(\eta)=\frac12\eta^{s-2}\exp\left(-\frac{1}{2\eta}\right).
\end{equation}
The kernel $\phi$ is integrable, so $H_\mu^s$ is bounded and uniformly continuous on $\bbR$.

\section{Renewal of correlation profiles}\label{sec:renewal}

\subsection{Open set condition estimates}\label{sec:osc-geometry}

We now apply the renewal theorem to the correlation integral of the natural measure of a
self-similar IFS on $[0,1]$ satisfying the open set condition.
As before, we write $S_i$ for the contracting maps,
 $r_i$ for the contraction rations, $K$ for the associated attractor, $\mu$ for its natural measure
 and $s\in(0,1)$ for its Hausdorff dimension.
Write $r_{\min}=\min_i r_i$.
For a finite word $w=i_1\cdots i_k$, put
\[
  S_w=S_{i_1}\circ\cdots\circ S_{i_k},\qquad
  r_w=r_{i_1}\cdots r_{i_k},\qquad
  p_w=r_w^s,\qquad I_w=S_w([0,1]).
\]
For a non-empty word $w$, write $w^-$ for the word obtained by deleting its last symbol.
For $0<u<1$, set $\cW(u)=\set{w\ne\varnothing\colon r_w\leq u<r_{w^-}}$, and set
$\cW(1)=\set{\varnothing}$. For $w\in\cW(u)$, the stopping condition and
iteration of self-similarity give
\begin{equation}\label{eq:stopping-family}
  r_{\min}u<r_w\leq u,\qquad
  \sum_{w\in\cW(u)}p_w=1,\qquad
  \mu=\sum_{w\in\cW(u)}p_w(S_w)_*\mu.
\end{equation}

By the standard bounded overlap estimate under the open set condition (see
Hutchinson~\cite[proof of Theorem~5.3(1)]{Hutchinson1981}), there is an integer $M\geq1$
such that, for every $0<u\leq1$ and $x\in\bbR$, at most $M$ words $w\in\cW(u)$
satisfy $x\in I_w$. The intervals $(I_w)_{w\in\cW(u)}$ can therefore be partitioned
into at most $M$ families with pairwise disjoint interiors.

\subsection{Cross-correlation and decay}
We write
\[
  \cC^\times_\delta(\mu)=\sum_{i\neq j}p_ip_j
    (\mu\times\mu)\set{(x,y)\colon\abs{S_i(x)-S_j(y)}\leq\delta}
\]
for the total cross term contribution from pairs with distinct first symbols. Expanding
$\mu=\sum_i p_i(S_i)_*\mu$ and rescaling gives
\begin{equation}\label{eq:correlation-recursion}
  \cC_\delta(\mu)=\sum_i r_i^{2s}\cC_{\delta/r_i}(\mu)+\cC^\times_\delta(\mu),
  \qquad\delta>0.
\end{equation}
Recall that $G(t)=e^{st}\cC_{e^{-t}}(\mu)$, and put
$\alpha_i=\log(1/r_i)$ for the logarithmic contractions. Multiplication by $e^{st}$ at
$\delta=e^{-t}$ gives
\begin{equation}\label{eq:g-recursion}
  G(t)=\sum_i p_iG(t-\alpha_i)+e^{st}\cC^\times_{e^{-t}}(\mu),
  \qquad t\in\bbR.
\end{equation}
Let $\vartheta=\sum_i p_i\delta_{\alpha_i}$ be the probability distribution of the logarithmic
contraction ratios with mean
$m=\sum_i p_i \alpha_i$. Write $z(t)=e^{st}\cC^\times_{e^{-t}}(\mu)$ for the rescaled cross term.
Then
\cref{eq:g-recursion} is the renewal equation $G=z+\vartheta*G$, with interarrival distribution
$\vartheta$ and inhomogeneous term $z$.
We first establish the behaviour of the $z$ term.

\begin{lemma}\label{thm:renewal-defect-left-tail}
The function $z(t)$ is continuous and for $t\leq0$,
\[
  z(t)=e^{st}\left(1-\sum_i r_i^{2s}\right)>0.
\]
In particular, $z(t)$ decays exponentially as $t\to-\infty$.
\end{lemma}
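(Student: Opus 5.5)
Continuity of $z$ and the explicit formula on $t\leq0$ both come from the recursion \cref{eq:correlation-recursion}. Solving it for the cross term gives
\[
  z(t)=e^{st}\cC^\times_{e^{-t}}(\mu)=e^{st}\Bigl(\cC_{e^{-t}}(\mu)-\sum_i r_i^{2s}\cC_{e^{-t}/r_i}(\mu)\Bigr),
\]
so $z(t)=G(t)-\sum_ip_iG(t-\alpha_i)$, which is exactly \cref{eq:g-recursion}. Since $\delta\mapsto\cC_\delta(\mu)$ is continuous, as shown in \cref{sec:setup} using that $\mu$ has no atoms, $G$ is continuous. Hence $z$ is a finite combination of continuous functions and is continuous.

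For $t\leq0$ we have $\delta=e^{-t}\geq1$. Then also $\delta/r_i\geq1$, because $r_i<1$. All correlation integrals involved therefore equal $1$, since $\cC_\delta(\mu)=1$ for $\delta\geq1$. This gives
\[
  z(t)=e^{st}\Bigl(1-\sum_i r_i^{2s}\Bigr).
\]
Strict positivity follows from $\sum_i r_i^s=1$ with at least two maps. Each $r_i^s\in(0,1)$, so $r_i^{2s}<r_i^s$, and hence $\sum_i r_i^{2s}<1$. If $N=1$, the dimension $s$ could not lie in $(0,1)$. Exponential decay as $t\to-\infty$ is then immediate from the factor $e^{st}$ with $s>0$.

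There is one point to double-check: whether the recursion \cref{eq:correlation-recursion} is exact, given that the diagonal terms use closed or open balls and the cross term uses $\leq\delta$. Both conventions give the same value because the distance distribution has no atoms; the same Fubini argument applies to $S_i(x)-S_j(y)$, since $(S_j)_*\mu$ is atomless. Alternatively, one can verify directly that $\cC^\times_\delta(\mu)=\sum_{i\neq j}p_ip_j=1-\sum_ip_i^2$ for $\delta\geq1$, because all points lie in $[0,1]$. This gives the same formula independently of the recursion. There is no real obstacle in this argument.
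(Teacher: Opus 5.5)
Your proof is correct and follows the paper's argument: continuity from $z(t)=G(t)-\sum_i p_iG(t-\alpha_i)$, the formula for $t\leq0$ from every correlation integral being full, and positivity from $p_i^2<p_i$. The only difference is cosmetic. You read off $1-\sum_i r_i^{2s}$ from the recursion, whereas the paper computes $\cC^\times_{e^{-t}}(\mu)=\sum_{i\neq j}p_ip_j$ directly, which is the alternative you also note.
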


\begin{proof}
Continuity follows from $z(t)=G(t)-\sum_i p_iG(t-\alpha_i)$, since $G$ is continuous,
and non-negativity follows from the definition of the cross term. For $t\leq0$, every
pair lies within distance $e^{-t}\geq1$, so
\[
  \cC^\times_{e^{-t}}(\mu)=\sum_{i\neq j}p_ip_j=1-\sum_i p_i^2.
\]
The claim follows because $p_i=r_i^s\in(0,1)$ and $\sum_i p_i=1$.
\end{proof}

The inhomogeneous term disappears for large enough $t$ when the set satisfies the strong separation
condition. Assuming just the open set condition requires a little extra work and the following bound
still gives exponential decay of $z(t)$ as $t\to\infty$ here.

\begin{lemma}\label{thm:cross-piece-mass}
There exist $C>0$ and $\eta\in(0,1-s)$ such that
\begin{equation}\label{eq:cross-piece-mass}
  0\leq\cC^\times_\delta(\mu)\leq C\delta^{s+\eta},\qquad0<\delta\leq1.
\end{equation}
Consequently, $0\leq z(t)\leq Ce^{-\eta t}$ for all $t\geq0$.
\end{lemma}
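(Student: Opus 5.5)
Non-negativity is immediate from the definition of $\cC^\times_\delta(\mu)$, and the bound on $z(t)=e^{st}\cC^\times_{e^{-t}}(\mu)$ for $t\geq0$ follows from \cref{eq:cross-piece-mass} with $\delta=e^{-t}$, since $e^{st}e^{-(s+\eta)t}=e^{-\eta t}$. So the whole task is the upper bound in \cref{eq:cross-piece-mass}. The cross term is $\sum_{i\neq j}(\mu_i\times\mu_j)\{\abs{x-y}\leq\delta\}$ with $\mu_i=p_i(S_i)_*\mu$. This is the mass of pairs of points at distance at most $\delta$ that lie in distinct first-level pieces.

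Step one is to reduce this to a neighbourhood estimate. Fix $i\neq j$. By the Frostman bound $\mu(B(x,\delta))\leq A\delta^s$,
\[
(\mu_i\times\mu_j)\{\abs{x-y}\leq\delta\}\leq A(2\delta)^s\,\mu_i\bigl(\{x\colon\dist(x,\operatorname{supp}\mu_j)\leq\delta\}\bigr),
\]
so it suffices to show $\mu(S_i(K)\cap\langle S_j(K)\rangle_{\delta})\leq C\delta^\eta$ for $i\neq j$. (In the strongly separated case this is zero for small $\delta$.)

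Step two obtains the power decay $\delta^\eta$ from the open set condition. By Schief, I would use the strong open set condition: there is an open $U$ with disjoint images $S_i(U)$ and some cylinder $I_v=S_v([0,1])$ with $S_v(K)\subset U$ at positive distance $d_0$ from $\partial U$. Since $\operatorname{supp}\mu_j\subset\overline{S_j(U)}$, which is disjoint from $S_i(U)$, a point $x\in S_i(K)$ within $\delta$ of $S_j(K)$ lies within $\delta$ of $\partial S_i(U)$. Hence it suffices to bound $\mu$ of the set $N_\delta=\{x\in K\colon\dist(x,\partial U)\leq \delta\}$ after rescaling by $S_i$, i.e.\ to show $\mu(N_\delta)\leq C\delta^\eta$.

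Step three proves this by the standard porosity iteration. Cover $K$ by the cylinders $S_w(K)$ with $w\in\cW(u)$. Any cylinder $S_w(K)$ contains the subcylinder $S_{wv}(K)$, which has relative mass $p_v$ and sits inside $S_w(U)\subset U$ at distance at least $r_wd_0$ from $\partial S_w(U)$. I expect this to be the main obstacle: the relevant boundary is $\partial U$, not $\partial S_w(U)$, so one must check that the points of $S_{wv}(K)$ are far from $\partial U$ itself. This works because $S_w(U)\subseteq U$, so $\dist(\cdot,\partial U)\geq\dist(\cdot,\partial S_w(U))$ on $S_w(U)$.

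Iterating with $u=\rho^k$ for a fixed ratio $\rho$ then gives the decay. At each scale a fixed proportion $p_v>0$ of the remaining mass is removed from $N_{\rho^{k}\cdot c}$, so $\mu(N_{c\rho^k})\leq(1-p_v)^{k}$ up to constants; here the bounded overlap constant $M$ and the inequality $r_{\min}u<r_w$ handle the comparability of scales. This yields $\mu(N_\delta)\leq C\delta^{\eta}$ with $\eta=\log(1-p_v)/\log\rho>0$. Decreasing $\eta$ if needed to ensure $\eta<1-s$, and summing over the finitely many pairs $(i,j)$, completes the proof.
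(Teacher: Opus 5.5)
Your proof is correct, but it reaches the key decay estimate by a different route from the paper. Both arguments begin with the same reduction via the Frostman upper bound. It then suffices to show that the $\mu$-mass of points of $K$ whose $S_i$-image lies within $\delta$ of $S_j(K)$ is $O(\delta^\eta)$.

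The paper quotes Lalley's estimate~\cite[Proposition~3]{Lalley1988}, which bounds the number of $\delta$-separated points of $A_{ij}(\delta)$ by $C\delta^{\gamma-s}$. It converts this count into mass using the Ahlfors upper bound and pulls back with $\mu(S_iA)\geq p_i\mu(A)$.

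You instead prove the decay directly. The strong open set condition gives a cylinder $S_v(K)$ at distance $d_0$ from $\bbR\setminus U$, and $S_w(U)\subseteq U$ transports this porosity to every scale, so the boundary layer $N_\delta$ of $U$ has polynomially small mass. Your version is self-contained, uses only the Frostman bound and the SOSC, and gives an explicit exponent. The paper's is shorter but rests on an external lemma.

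When you write out Step three, drop the overlap constant $M$. Covering $K$ by the sets $S_w(K)$ and iterating would also silently require that their overlaps are $\mu$-null. It is cleaner to combine three facts:
\begin{itemize}
\item the decomposition $\mu=\sum_{\abs{w}=\abs{v}}p_w(S_w)_*\mu$;
\item the inclusion $K\cap S_w^{-1}(N_a)\subseteq N_{a/r_w}$, which is exactly your observation $S_w(U)\subseteq U$;
\item $K\cap S_v^{-1}(N_a)=\varnothing$ for $a<d_0$.
\end{itemize}
These give $\mu(N_a)\leq(1-p_v)\,\mu(N_{a r_{\min}^{-\abs{v}}})$ for $a<d_0$. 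Iterating yields $\eta=\log(1/(1-p_v))/(\abs{v}\log(1/r_{\min}))$.

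Similarly, do the rescaling at the level of $\mu_i=p_i(S_i)_*\mu$, bounding $\mu_i\{x\colon\dist(x,S_j(K))\leq\delta\}\leq p_i\,\mu(N_{\delta/r_i})$. Going through $\mu$ restricted to $S_i(K)$ would again need null overlaps.
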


\begin{proof}
  By Ahlfors regularity, for each $x\in K$ the $\mu$-mass of points $y$ satisfying
  $\abs{S_i(x)-S_j(y)}\leq\delta$ is at most $C\delta^s$, and it vanishes unless
  $\dist(S_i(x),S_jK)\leq\delta$. Fubini's theorem gives
  \[
    \cC^\times_\delta(\mu)\leq C\delta^s\sum_{i\neq j}p_ip_j\,
    \mu\set{x\in K\colon\dist(S_i(x),S_jK)\leq\delta},
  \]
  so it suffices to prove, for some $\eta\in(0,1-s)$,
  \begin{equation}\label{eq:boundary-mass}
    \mu\set{x\in K\colon\dist(S_i(x),S_jK)\leq\delta}
    \leq C\delta^\eta,\qquad0<\delta\leq1,\quad i\neq j.
  \end{equation}
  By Schief~\cite{Schief1994} the IFS satisfies the strong open set condition, so
  Lalley's estimate~\cite[Proposition~3]{Lalley1988} applies: there is $\gamma>0$
  such that every $\delta$-separated subset of
  $A_{ij}(\delta)=\set{y\in S_iK\colon\dist(y,S_jK)\leq\delta}$ has at most
  $C\delta^{\gamma-s}$ points. The closed balls of radius $\delta$ about a maximal
  $\delta$-separated subset cover $A_{ij}(\delta)$, and each has $\mu$-mass at most
  $C\delta^s$ by Ahlfors regularity, so $\mu(A_{ij}(\delta))\leq C\delta^\gamma$.
  Since $\mu(S_iA)\geq p_i\mu(A)$ for every Borel set $A$ by self-similarity,
  \cref{eq:boundary-mass} follows with $\eta=\min\set{\gamma,(1-s)/2}$, and with it
  \cref{eq:cross-piece-mass}.
  Taking $\delta=e^{-t}$ and multiplying by $e^{st}$ gives the bound on $z$.
\end{proof}

\subsection{Renewal and asymptotics}\label{sec:renewal-equation}
We use the following form of the key renewal theorem, see
Falconer~\cite[Proposition~7.1 and Theorem~7.2]{Falconer1997} or
Feller~\cite[Chapter~XI]{Feller1971}. Let $\vartheta$ be a finitely supported probability
distribution on $(0,\infty)$ with mean $m$. Suppose that a bounded continuous function
$f\colon\bbR\to\bbR$ satisfies $f=z+\vartheta*f$, tends to zero at $-\infty$, and has a
continuous, possibly signed defect $z$ with $\abs{z(t)}\leq Ce^{-c\abs{t}}$ for some $C,c>0$. Then
\[
  f=\sum_{n\geq0}\vartheta^{*n}*z,
\]
and $f$ is uniformly continuous. If $\vartheta$ is non-lattice, then
\[
  f(t)\to\frac1m\int_\bbR z(x)\dd x
  \qquad\text{as }t\to\infty.
\]
If $\vartheta$ is lattice with span $\ell$, then
\[
  \sup_{t\in[0,\ell]}\abs{f(t+n\ell)-\widetilde f(t)}\to0\quad(n\to\infty),
  \qquad
  \widetilde f(t)=\frac{\ell}{m}\sum_{k\in\bbZ}z(t+k\ell).
\]
Indeed, the renewal theorem gives pointwise convergence on $[0,\ell]$. The exponential tails of
$z$ make $\widetilde f$ continuous, and uniform continuity of $f$ makes its translates
equicontinuous, so the convergence is uniform on this compact interval. In particular,
$f(t)-\widetilde f(t)\to0$ as $t\to\infty$.

We shall also use the following consequence for a bounded continuous function
$f\colon[0,\infty)\to\bbR$. If $\abs{f(t)-(\vartheta*f)(t)}\leq Ce^{-ct}$ for all sufficiently
large $t$, then, for every $u$ in the support of $\vartheta$,
\begin{equation}\label{eq:renewal-shift}
  f(t-u)-f(t)\to0
  \qquad\text{as }t\to\infty.
\end{equation}
To see this, extend $\min\{t,1\}f(t)$ by zero to $t\leq0$. The defect of this cut-off function
is continuous, vanishes on $(-\infty,0]$, and decays exponentially at $\infty$, so the renewal
theorem applies. The constant limit gives \cref{eq:renewal-shift} in the non-lattice case.
In the lattice case, the uniform periodic approximation gives the same conclusion since
$u\in \ell\bbZ$.

We apply this result to the correlation profile $G$. In the lattice case with span $\ell$, define
\begin{equation}\label{eq:lattice-periodisation}
  \widetilde G(t)=\frac{\ell}{m}\sum_{k\in\bbZ}z(t+k\ell),\qquad t\in\bbR,
\end{equation}
which is continuous and $\ell$-periodic by the exponential bounds on $z$.

\begin{proposition}\label{thm:renewal-limits}
  If the IFS is of non-lattice type, then
  \begin{equation}\label{eq:g-non-lattice-limit}
    G(t)\to\frac{1}{m}\int_{\bbR}z(x)\dd x\in(0,\infty),
    \qquad t\to\infty.
  \end{equation}
  If the IFS is of lattice type with span $\ell$, then $\widetilde G$ is continuous,
  strictly positive and $\ell$-periodic, and
  \[
    G(t)-\widetilde G(t)\to0,\qquad t\to\infty.
  \]
\end{proposition}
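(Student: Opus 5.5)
We apply the key renewal theorem of \cref{sec:renewal-equation} directly to $f=G$, with interarrival distribution $\vartheta=\sum_i p_i\delta_{\alpha_i}$ and defect $z$. The first task is to check the hypotheses.

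\textbf{Checking the hypotheses.}
\begin{itemize}
\item[(i)] $\vartheta$ is a finitely supported probability distribution on $(0,\infty)$, since $\alpha_i=\log(1/r_i)>0$ and $\sum_i p_i=1$.
\item[(ii)] $G$ is bounded and continuous on $\bbR$ by \cref{sec:setup}. It tends to zero at $-\infty$, because $G(t)=e^{st}$ for $t\leq0$.
\item[(iii)] The renewal equation $G=z+\vartheta*G$ holds for all $t\in\bbR$ by \cref{eq:g-recursion}.
\item[(iv)] $z$ is continuous with two-sided exponential decay. For $t\leq0$, \cref{thm:renewal-defect-left-tail} gives $z(t)=e^{st}(1-\sum_i p_i^2)$, so $\abs{z(t)}\leq e^{-s\abs t}$. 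For $t\geq0$, \cref{thm:cross-piece-mass} gives $z(t)\leq Ce^{-\eta t}$. Taking $c=\min\{s,\eta\}$ yields $\abs{z(t)}\leq C'e^{-c\abs t}$.
\end{itemize}

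\textbf{The two cases.} The type of $\vartheta$ matches the type of the IFS, since the support of $\vartheta$ is $\{\alpha_i\}$ and all $p_i>0$.
\begin{itemize}
\item[(a)] In the non-lattice case, the theorem gives $G(t)\to m^{-1}\int_\bbR z$.
\item[(b)] In the lattice case with span $\ell$, the theorem gives $G(t)-\widetilde G(t)\to0$, where $\widetilde G$ is exactly the function in \cref{eq:lattice-periodisation}.
\end{itemize}
Continuity and $\ell$-periodicity of $\widetilde G$ follow from the locally uniform convergence of $\sum_k z(t+k\ell)$, which is dominated by $C'\sum_k e^{-c\abs{t+k\ell}}$.

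\textbf{Positivity.} This is the only step needing care. Since $z\geq0$ everywhere (it is a cross-correlation term) and $z>0$ on $(-\infty,0]$ by \cref{thm:renewal-defect-left-tail}:
\begin{itemize}
\item $\int_\bbR z\,dx\geq\int_{-\infty}^0 e^{sx}(1-\sum_ip_i^2)\,dx>0$. The integral is finite by the exponential bound, and $0<m<\infty$, so the non-lattice limit lies in $(0,\infty)$.
\item For any $t$, choose $k$ with $t+k\ell\leq0$. Then $\widetilde G(t)\geq(\ell/m)\,z(t+k\ell)>0$, since every summand is non-negative.
\end{itemize}

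No step is a real obstacle: the substance lies in the previously established decay estimate of \cref{thm:cross-piece-mass}. The main point to verify is that the renewal theorem as quoted allows $f$ defined on all of $\bbR$ with $f\to0$ at $-\infty$, which $G$ satisfies.
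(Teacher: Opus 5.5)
Your proposal is correct and follows essentially the same route as the paper. It verifies the hypotheses of the quoted renewal theorem from the two defect lemmas and from $G$ being bounded with $G\to0$ at $-\infty$, applies the theorem, and gets positivity from $z\geq0$ together with $z>0$ on $(-\infty,0]$. You also spell out details the paper leaves implicit: the matching of lattice types, convergence of the periodisation, and the choice of $k$ with $t+k\ell\leq0$.
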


\begin{proof}
By \cref{thm:renewal-defect-left-tail,thm:cross-piece-mass}, the defect $z$ satisfies the
hypotheses above. Since $G$ is bounded and tends to zero at $-\infty$, the renewal theorem
gives \cref{eq:g-non-lattice-limit} and the lattice approximation. Positivity of the integral
and of $\widetilde G$ follows from $z\geq0$ and $z(t)>0$ for $t\leq0$.
\end{proof}

We next show that in the lattice case, the limiting function is properly $\ell$ periodic.

\begin{lemma}\label{thm:lattice-nonconvergence}
If the IFS is of lattice type, then the periodic profile $\widetilde G$ is non-constant.
\end{lemma}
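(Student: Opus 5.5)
The plan is to argue by contradiction through the Fourier coefficients of $\widetilde G$. These can be computed from the defect $z$ and then rewritten in terms of a complex energy of $\mu$. Suppose that $\widetilde G\equiv c$ is constant. Since $z$ is continuous and has exponential tails on both sides (\cref{thm:renewal-defect-left-tail,thm:cross-piece-mass}), Fubini on \cref{eq:lattice-periodisation} gives, for $\xi_k=2\pi k/\ell$,
\[
  \frac1\ell\int_0^\ell\widetilde G(t)e^{-i\xi_kt}\dd t=\frac1m\widehat z(\xi_k).
\]
So constancy is equivalent to $\widehat z(\xi_k)=0$ for every $k\neq0$. I would then express $\widehat z$ through the cross energy
\[
  E_\times(w)=\sum_{i\neq j}p_ip_j\iint\abs{S_i(x)-S_j(y)}^{-w}\dd\mu(x)\dd\mu(y).
\]
Writing $z(t)=e^{st}\cC^\times_{e^{-t}}(\mu)$ and using the layer-cake formula, $\widehat z(\xi)=E_\times(s-i\xi)/(s-i\xi)$. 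By \cref{thm:cross-piece-mass}, $E_\times$ is analytic on the strip $0<\operatorname{Re}w<s+\eta$.

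Next I would compare with the full energy $E(w)=\iint\abs{x-y}^{-w}\dd\mu\dd\mu$, which is finite for $0<\operatorname{Re}w<s$. The recursion \cref{eq:correlation-recursion}, written at the level of energies, gives
\[
  E(w)=\frac{E_\times(w)}{1-\sum_ip_i^2r_i^{-w}} .
\]
In the lattice case the denominator vanishes at each $w_k=s+i\xi_k$, because $p_i^2r_i^{-s}=p_i$ and $r_i^{-i\xi_k}=1$ when $\alpha_i\in\ell\bbZ$. Its derivative there equals $-\sum_ip_i\alpha_i=-m\neq0$, so these zeros are simple. Hence, under the constancy hypothesis, $E$ continues analytically across every point $w_k$ with $k\neq0$, and has a simple pole only at $w=s$, with residue $E_\times(s)/m=c$.

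The contradiction must then come from the concrete geometry, and this is the step I expect to be the main obstacle: the analytic information by itself does not forbid a smooth tail $\cC_\delta(\mu)\sim c\delta^s$. What I would try first is to return to the periodisation identity in real variables. Differentiating $\sum_kz(t+k\ell)\equiv c$ in the sense of distributions, the measure $\sigma=e^{su}\,\dd(-\cC^\times_{e^{-u}}(\mu))$, periodised modulo $\ell$, must equal $sc$ times Lebesgue measure. I would then test this against a window $t\in(0,\eps)$ modulo $\ell$, and split the periodised sum into two groups. The first group consists of terms with $t+k\ell$ small; these count cross pairs at distance close to $1$. Such pairs must lie within $O(\eps)$ of the two extreme endpoints of $K$, so by Ahlfors regularity they have mass $O(\eps^{2s})=o(\eps)$. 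The second group consists of terms with $k\geq1$; these count cross pairs at distance in $(e^{-k\ell-\eps},e^{-k\ell}]$. Here I would use the lattice structure: every cylinder $I_w$ has length in $e^{-\ell\bbZ}$, so cross pairs at these scales arise from finitely many rescaled copies of the level-one configuration, and I would compare them with the same window near $0$ after rescaling. The aim is to show that the total mass in the window is $o(\eps)$, which contradicts $\int_0^\eps sc\,\dd t=sc\,\eps$ with $c>0$ (positivity from \cref{thm:renewal-limits}). If this direct estimate fails for the $k\geq1$ terms, the fallback is to locate a single explicit residue class modulo $\ell$ (for instance $t\equiv\log(1/g)$, with $g$ the largest gap between adjacent level-one cylinders) at which the periodised density must jump or vanish, again contradicting constancy.
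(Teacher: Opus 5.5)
\paragraph{There is a genuine gap.} Your reformulation is sound. Constancy of $\widetilde G$ is equivalent to the periodised measure $\sigma$ being a constant multiple of Lebesgue measure on $\bbR/\ell\bbZ$, or equivalently to $\delta\mapsto\frac{\ell}{m}\sum_k e^{sk\ell}\cC^\times_{\delta e^{-k\ell}}(\mu)$ being a constant multiple of $\delta^s$. As you note, the Fourier and energy computations add nothing beyond this.

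The gap is in the step that should produce the contradiction. You try to show that the window $t\in(0,\eps)$ modulo $\ell$ carries mass $o(\eps)$. There is no reason for this, and it is false in general.
\begin{enumerate}[(a)]
\item For $k=0$, the bound $O(\eps^{2s})$ is $o(\eps)$ only when $s>1/2$.
\item For $k\geq1$, lattice type constrains the contraction ratios but not the translations. So cross pairs at distance about $e^{-k\ell}$ are not rescaled copies of the level-one configuration, and they can carry far more than $\eps$.
\end{enumerate}
A concrete counterexample is the digit set $\{0,1,4\}$ of \cref{sec:base-five}. There $S_1(y)-S_0(x)=(1+y-x)/5$, so the pairs in $S_0K\times S_1K$ at distance in $(e^{-\eps}/5,1/5]$ are exactly those with $0\leq x-y<1-e^{-\eps}$. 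Their mass is $\asymp\cC_\eps(\mu)\asymp\eps^s\gg\eps$, and they fall in your window with $k=1$.

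The fallback is also unsupported. Under the open set condition the first-level pieces may touch or interleave. Moreover, pairs straddling a gap contribute only $\asymp\delta^{2s}$ to a window of width $\delta$. For $s>1/2$ this is compatible with a continuous density, and other cylinder pairs and other scales can also contribute near $\log(1/g)$.

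\paragraph{The missing idea.} The contradiction should come from \emph{excess} mass in a short window. The source of that mass is a pair of distinct cylinders that are exact translates of each other.

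For $i\neq j$ we have $S_{ij}'=S_i'S_j'=S_{ji}'$, so $S_{ij}$ and $S_{ji}$ differ by a translation of length $\rho=\abs{S_{ij}(0)-S_{ji}(0)}$. Moreover $\rho>0$, because $S_{ij}(U)\subseteq S_i(U)$ and $S_{ji}(U)\subseteq S_j(U)$ are disjoint. Hence $\abs{S_{ij}(x)-S_{ji}(y)}$ differs from $\rho$ by at most $r_{ij}\abs{x-y}$.

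Put $\delta_1=\rho-2r_{ij}\delta$ and $\delta_2=\rho+r_{ij}\delta$. Ahlfors regularity then gives
\[
\cC^\times_{\delta_2}(\mu)-\cC^\times_{\delta_1}(\mu)\geq p_i^2p_j^2\cC_\delta(\mu)\geq c\delta^s.
\]
Every other term in the periodisation has a nonnegative increment, since $\cC^\times$ is nondecreasing. So no upper bounds on the remaining terms are needed, which removes exactly the obstacle you ran into.

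Consequently $\delta\mapsto\delta^s\widetilde G(-\log\delta)$ increases by at least $c'\delta^s$ across $[\delta_1,\delta_2]$. For constant $\widetilde G$ this increase would be $O(\delta)$, which is impossible since $s<1$. In your language: test $\sigma$ on a window of width $\asymp\delta$ around the residue class of $\log(1/\rho)$, not around $0$, and bound it from below, not from above.
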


\begin{proof}
Fix $i\neq j$. The maps $S_{ij}$ and $S_{ji}$ have the contraction ratio, while their
images must be non-trivially translationally separated by the open set condition.
So, there exists $\rho\coloneqq\abs{S_{ij}(0)-S_{ji}(0)}>0$.

For $0<\delta<\rho/(2r_{ij})$, put $\delta_1=\rho-2r_{ij}\delta$ and
$\delta_2=\rho+r_{ij}\delta$. We have
$\delta_1<\abs{S_{ij}(x)-S_{ji}(y)}\leq\delta_2$.
Expanding the difference $\cC^\times_{\delta_2}(\mu)-\cC^\times_{\delta_1}(\mu)$ therefore gives
\begin{align*}
  \cC^\times_{\delta_2}(\mu)-\cC^\times_{\delta_1}(\mu)
  &\geq p_i^2p_j^2(\mu\times\mu)
    \set{(x,y)\colon\delta_1<\abs{S_{ij}(x)-S_{ji}(y)}\leq\delta_2}\\*
  &\geq p_i^2p_j^2\cC_\delta(\mu).
\end{align*}
Hence,
\begin{align*}
  \delta_2^s\widetilde G(-\log\delta_2)-\delta_1^s\widetilde G(-\log\delta_1)
  &=\frac{\ell}{m}\sum_{k\in\bbZ}e^{sk\ell}
    \bigl(\cC^\times_{\delta_2e^{-k\ell}}(\mu)-\cC^\times_{\delta_1e^{-k\ell}}(\mu)\bigr)\\*
  &\geq\frac{\ell}{m} p_i^2p_j^2\cC_\delta(\mu)\geq c\delta^s.
\end{align*}
Since $\widetilde G$ is bounded and $\delta_2^s-\delta_1^s=O(\delta)$, this gives
\[
  \delta_2^s\bigl(\widetilde G(-\log\delta_2)-\widetilde G(-\log\delta_1)\bigr)
  \geq c\delta^s-C\delta>0
\]
for all sufficiently small $\delta>0$, since $s<1$.
\end{proof}

\section{Gaussian smoothing and distributional separation}\label{sec:smoothing}

For $0<s<1$, $\ell>0$, and a continuous $\ell$-periodic function $g$, define $Tg$ by the
Gaussian smoothing from \cref{eq:h-definition},
\[
  Tg(t)=\int_0^\infty\phi(\eta)g(2t-\log\eta)\dd\eta.
\]
Then $Tg$ is continuous and $\ell/2$-periodic.

\begin{lemma}\label{thm:smoothing-injective}
The operator $T$ is injective on the continuous $\ell$-periodic functions. Further,
$Tg$ can be constant only when $g$ is constant.
\end{lemma}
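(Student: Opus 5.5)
Since $T$ commutes with translations, the plan is to diagonalise it by Fourier series. Write $g(t)=\sum_{k\in\bbZ}\hat g(k)e^{2\pi ikt/\ell}$, understood in $L^2([0,\ell])$. For each exponential $e_k(t)=e^{2\pi ikt/\ell}$ we compute
\[
  Te_k(t)=e^{4\pi ikt/\ell}\int_0^\infty\phi(\eta)\eta^{-2\pi ik/\ell}\dd\eta
  =m_k\,e_{2k}^{(\ell)}(t),
\]
where $e^{(\ell)}_{2k}(t)=e^{2\pi i(2k)t/\ell}$. Each such function is $\ell/2$-periodic with frequency index $k$ relative to period $\ell/2$. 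Because $\phi\in L^1$, the integral defining $T$ may be exchanged with the Fourier expansion: pairing against $e_k$ over a period and using Fubini together with the periodicity of $g$ in the shifted variable gives $\widehat{Tg}(k)=m_k\hat g(k)$, where the coefficients of $Tg$ are taken with respect to period $\ell/2$. So $T$ acts as a Fourier multiplier with symbol $m_k$.

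The key step is computing $m_k=\int_0^\infty\phi(\eta)\eta^{-i\tau}\dd\eta$ with $\tau=2\pi k/\ell$. This is a Mellin transform of $\tfrac12\eta^{s-2}e^{-1/(2\eta)}$. Substituting $x=1/(2\eta)$ gives $\eta=1/(2x)$ and $\dd\eta=\dd x/(2x^2)$, so
\[
  m_k=\frac12\int_0^\infty (2x)^{2-s+i\tau}e^{-x}\frac{\dd x}{2x^2}
  =2^{-s+i\tau}\,\Gamma(1-s+i\tau).
\]
Since $1-s>0$, the integral converges absolutely. The Gamma function has no zeros, so $m_k\neq0$ for every $k\in\bbZ$. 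In addition, $m_0=2^{-s}\Gamma(1-s)>0$.

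Injectivity then follows. Since $T$ is linear, it suffices to show that $Tg=0$ forces $g=0$. If $Tg=0$, then $m_k\hat g(k)=0$ for all $k$, hence $\hat g(k)=0$ for all $k$, and so $g=0$ because a continuous function is determined by its Fourier coefficients. For the second claim, suppose $Tg$ is constant. Then $\widehat{Tg}(k)=0$ for all $k\neq0$, so $\hat g(k)=0$ for $k\neq0$, and $g$ is constant.

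The only point needing care is the Fubini exchange and the correct matching of frequencies between period $\ell$ and period $\ell/2$. The doubling $t\mapsto2t$ sends frequency $k$ on period $\ell$ to frequency $k$ on period $\ell/2$, so the correspondence is a bijection and no coefficients collide. The nonvanishing of the Gamma function, which is the conceptual heart of the argument, is classical.
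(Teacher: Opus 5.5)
Your proposal is correct and follows essentially the same route as the paper: you diagonalise $T$ via Fourier coefficients (period $\ell$ for $g$, period $\ell/2$ for $Tg$), compute the multiplier $2^{-s+2\pi ik/\ell}\Gamma(1-s+2\pi ik/\ell)$ by the substitution $x=1/(2\eta)$, and conclude from the non-vanishing of $\Gamma$ together with uniqueness of Fourier coefficients for continuous functions. Your frequency bookkeeping and the Fubini justification, using $\phi\in L^1$ and boundedness of $g$, are exactly what the argument needs.
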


\begin{proof}
Since $g$ is $\ell$-periodic, its Fourier coefficients are
\[
  \widehat g(k)=\frac1\ell\int_0^\ell g(x)e^{-2\pi ikx/\ell}\dd x,\qquad k\in\bbZ.
\]
The $\ell$-periodic function $x\mapsto Tg(x/2)=\int_0^\infty\phi(\eta)g(x-\log\eta)\dd\eta$
is an average of translates of $g$. Translating $g$ by $\log\eta\in\bbR$ multiplies
$\widehat g(k)$ by $e^{-2\pi ik\log(\eta)/\ell}=\eta^{-2\pi ik/\ell}$. 
Averaging over $\eta$, Fubini's theorem gives
\begin{equation}\label{eq:fourier-multiplier}
  \widehat{Tg}(k)=\left(\int_0^\infty\phi(\eta)\eta^{-2\pi ik/\ell}\dd\eta\right)\widehat g(k),
  \qquad k\in\bbZ,
\end{equation}
where $\widehat{Tg}(k)$ denotes the $k$-th Fourier coefficient of $Tg$ with respect to
its period $\ell/2$.

The multiplier, when substituting $x=1/(2\eta)$, becomes
\[
  \int_0^\infty\phi(\eta)\eta^{-2\pi ik/\ell}\dd\eta
  =2^{-s+2\pi ik/\ell}\int_0^\infty x^{-s+2\pi ik/\ell}e^{-x}\dd x.
\]
The last integral is $\Gamma(z)=\int_0^\infty x^{z-1}e^{-x}\dd x$ for
$z=1-s+2\pi ik/\ell$. Thus
\begin{equation}\label{eq:gamma-multiplier}
  \int_0^\infty\phi(\eta)\eta^{-2\pi ik/\ell}\dd\eta
  =2^{-s+2\pi ik/\ell}\Gamma(1-s+2\pi ik/\ell),
\end{equation}
which is non-zero.

Now let $g_1$ and $g_2$ be continuous and $\ell$-periodic with $Tg_1=Tg_2$. Linearity
gives $T(g_1-g_2)=0$, so by \cref{eq:fourier-multiplier} and the non-vanishing of the
multipliers, $\widehat{(g_1-g_2)}(k)=0$ for every $k\in\bbZ$. By the uniqueness theorem
$g_1=g_2$, so $T$ is injective. The conclusion on constant functions follows directly from the
non-vanishing of multipliers.
\end{proof}

\begin{proof}[Proof of \cref{thm:profile-asymptotics}]
In the non-lattice case, \cref{thm:renewal-limits} and dominated convergence in
\cref{eq:h-definition} give
\[
  H_\mu^s(t)\to\frac1m\int_\bbR z(x)\dd x
    \int_0^\infty\phi(\eta)\dd\eta\in(0,\infty).
\]

In the lattice case with span $\ell$, let $\widetilde G$ be the periodic limit from
\cref{thm:renewal-limits} and put $\widetilde H_\mu^s=T\widetilde G$. This function is
continuous, strictly positive, and $\ell/2$-periodic, and it is non-constant by
\cref{thm:lattice-nonconvergence,thm:smoothing-injective}. Since $G$ and $\widetilde G$ are
bounded, dominated convergence gives
\[
  H_\mu^s(t)-\widetilde H_\mu^s(t)
  =\int_0^\infty\phi(\eta)
    \bigl(G(2t-\log\eta)-\widetilde G(2t-\log\eta)\bigr)\dd\eta
  \to0.\qedhere
\]
\end{proof}

\section{The four-point variance bound}\label{sec:variance}

In this section we establish a variance bound on $\cC_r(\BM_*\mu)$ by first considering Gaussian
vectors coming from the Brownian path at different points in time.
Fix $0<s<1$ and an $s$-Frostman probability measure $\mu$ on $[0,1]$ with constant
$A\geq1$. We use the variance scaling function
\begin{equation}\label{eq:variance-scale}
  V_s(r)\coloneqq
  \begin{cases}
    r^{6s},&0<s<1/2,\\
    r^3(1+\log(1/r)),&s=1/2,\\
    r^{2s+2},&1/2<s<1.
  \end{cases}
\end{equation}
For $(u,t,u',t')\in[0,1]^4$, we write $I$ and $I'$ for the open intervals with endpoints
$u,t$ and $u',t'$, respectively, and put $Z=\BM_u-\BM_t$ and $Z'=\BM_{u'}-\BM_{t'}$ for the
associated random vectors of differences in the Brownian motion.

\begin{proposition}\label{thm:four-point-integral}
There is a constant $C>0$, depending only on $s$ and $A$, such that, for every $0<r\leq1$,
\[
  \int_{\set{I\cap I'\neq\varnothing}}
  \bbP(\abs{Z}<r,\abs{Z'}<r)\dd\mu^4
  \leq C V_s(r).
\]
\end{proposition}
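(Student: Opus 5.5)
We bound the Gaussian probability pointwise in terms of the gaps between the four times, and then integrate that bound against $\mu^4$ by a dyadic decomposition using only the Frostman bound \cref{eq:frostman}.

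\emph{Step 1: pointwise Gaussian estimate.} Order the four times as $t_1\leq t_2\leq t_3\leq t_4$, with gaps $g_1=t_2-t_1$, $g_2=t_3-t_2$, $g_3=t_4-t_3$. Write $L=t_4-t_1$ for the span and $g=\max\{g_1,g_3\}$ for the larger outer gap.
\begin{itemize}
\item If $I\cap I'\neq\varnothing$, the intervals are either crossing ($\{t_1,t_3\},\{t_2,t_4\}$) or nested ($\{t_1,t_4\},\{t_2,t_3\}$). In both cases one of $Z,Z'$ has variance comparable to $L$ (at least $L/3$ in the crossing case, exactly $L$ in the nested case).
\item $Z$ and $Z'$ are linear combinations of the independent planar increments $X_k$ over the three gaps.
\item In the nested case, $Z$ contains the increments over the outer gaps and $Z'$ does not. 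Conditioning on everything except the outer increment with the larger gap shows that $Z$ lies in a ball of radius $r$ with conditional probability at most $Cr^2/g$.
\item In the crossing case, $Z$ and $Z'$ each contain exactly one outer increment. The increments $X_1$ and $X_3$ are independent of each other and of the middle one.
\end{itemize}
In all cases we aim for
\[
  \bbP(\abs{Z}<r,\abs{Z'}<r)\leq C\min\Bigl(1,\frac{r^2}{L}\Bigr)\min\Bigl(1,\frac{r^2}{g}\Bigr).
\]
To get this, condition on the middle increment and use the bound $\bbP(\abs{N+a}<r)\leq\min(1,r^2/(2\sigma^2))$ for a planar Gaussian $N$ of variance $\sigma^2$ per coordinate, uniformly in the shift $a$. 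Equivalently, bound the density of $(Z,Z')$ by $C/\det$, where the per-coordinate determinant is $\abs{I}\abs{I'}-\abs{I\cap I'}^2\gtrsim Lg$. In the crossing case one must use the better of the two orders of conditioning. This step, a small-ball bound uniform over all overlapping configurations including near-degenerate ones, is the main obstacle. I would first try the explicit $2\times2$ covariance determinant per coordinate, combined with $\min(1,\cdot)$ truncation.

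\emph{Step 2: dyadic integration.} Decompose $[0,1]^4$ by $L\sim2^{-j}$ and $g\sim2^{-k}$, with $k\geq j$ (up to constants). Let the smallest time be free, with mass at most $1$. The remaining three times lie within $2^{-j}$ of it, but the outermost point must be within $2^{-k}$ of its neighbour. Using \cref{eq:frostman} and Fubini, the $\mu^4$-mass of such a cell is at most $C2^{-2js}2^{-ks}$. Hence the integral is bounded by
\[
  C\sum_{j\geq0}2^{-2js}\min(1,r^22^j)\sum_{k\geq j}2^{-ks}\min(1,r^22^k).
\]

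\emph{Step 3: summation.} The inner sum is $\lesssim r^{2s}$ when $2^{-j}\geq r^2$, and $\lesssim2^{-js}$ otherwise; this uses $s<1$ so the geometric series converge on both sides of $2^{-k}=r^2$.
\begin{itemize}
\item For $2^{-j}<r^2$, the outer sum gives $\sum 2^{-3js}\lesssim r^{6s}$.
\item For $2^{-j}\geq r^2$, we get $r^{2s}\sum_{2^{-j}\geq r^2}r^22^{j(1-2s)}$. This is $\lesssim r^{2s}\cdot r^{4s}=r^{6s}$ if $s<1/2$, $\lesssim r^{2s}\cdot r^2=r^{2s+2}$ if $s>1/2$, and $\lesssim r^3\log(1/r)$ if $s=1/2$.
\end{itemize}
Together these give $CV_s(r)$, with constants depending only on $s$ and $A$.
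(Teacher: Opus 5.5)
Your Step 1 target inequality $\bbP(\abs{Z}<r,\abs{Z'}<r)\leq C\min(1,r^2/L)\min(1,r^2/g)$ is false, so no refinement of the Gaussian estimate can establish it. Take the nested configuration $I=(t_1,t_4)$, $I'=(t_2,t_3)$ with $g_2\leq r^2\leq g_1=g_3=g$. Then $Z'=X_2$ satisfies $\abs{Z'}<r/2$ with probability at least $1-e^{-1/8}$. Also $Z=Z'+(X_1+X_3)$, where $X_1+X_3$ is independent of $Z'$ with variance $2g$ per coordinate. Hence
\[
  \bbP(\abs{Z}<r,\abs{Z'}<r)\geq(1-e^{-1/8})\,\bbP(\abs{X_1+X_3}<r/2)\geq c\,\frac{r^2}{g},
\]
whereas your bound is of order $r^4/g^2$. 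The ratio is of order $g/r^2$, which is unbounded as $r\to0$. The crossing case with $g_2,g_3\leq r^2\ll g_1$ fails in the same way. Correspondingly, your determinant claim is wrong. In the nested case $\abs{I}\abs{I'}-\abs{I\cap I'}^2=g_2(g_1+g_3)$, and in the crossing case it is $g_2(g_1+g_3)+g_1g_3$. Both are far smaller than $Lg$ in these regimes. The underlying error is that, in the nested case, the bounds $r^2/L$ and $r^2/g$ both express the smallness of the same vector $Z$, so they cannot be multiplied. A genuine second factor must come from $Z'$ being small given $Z$, and that is governed by the middle gap, not by the span.

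The missing idea is to parametrise by $a=\abs{I\cap I'}=t_3-t_2$ and $b=\abs{I\mathbin{\triangle}I'}=g_1+g_3$, as the paper does. The identity $\abs{I}\abs{I'}-a^2=ab+(\abs{I}-a)(\abs{I'}-a)\geq ab$ bounds the joint density by $1/((2\pi)^2ab)$, giving $r^4/(4ab)$. The longer interval has length at least $(a+b)/2$, giving $r^2/(a+b)$. Together these yield $\bbP(\abs{Z}<r,\abs{Z'}<r)\leq\min\{1,r^2/a\}\min\{1,r^2/b\}$. The mass count must then be redone. Fix $t_2$, place $t_3$ within $a$ of it, and place $t_1,t_4$ within $b$ of $t_2,t_3$. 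This gives $\mu^4\{\abs{I\cap I'}<a,\ \abs{I\mathbin{\triangle}I'}<b\}\leq4!A^3\min\{1,a\}^s\min\{1,b\}^{2s}$. So the exponent $2s$ attaches to the outer gaps rather than to the span, and there is no ordering constraint between $a$ and $b$. A layer-cake or dyadic summation then factorises: the $a$-part gives $r^{2s}$, and the $b$-part gives $r^{4s}$, $r^2(1+\log(1/r))$ or $r^2$, which together give $V_s(r)$. Your Step 3 also returns $V_s(r)$ only because an invalid pointwise bound was paired with a non-sharp mass bound. Nested configurations with $a\lesssim r^2$ and $b\approx1$, for instance, have probability of order $r^2$, not $r^4$, so the agreement does not certify your argument.
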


\begin{proof}
Since $\mu$ is non-atomic, we may ignore tuples with repeated endpoints. For overlapping
intervals, put
\[
  a=|I\cap I'|\quad\text{and}\quad b=|I\mathbin{\triangle}I'|.
\]
The coordinate pairs $(Z_1,Z'_1)$ and $(Z_2,Z'_2)$ are independent centred Gaussian vectors,
each with covariance matrix
\[
  \begin{pmatrix}|I|&\pm a\\\pm a&|I'|\end{pmatrix}.
\]
The determinant of this matrix is
\[
  |I||I'|-a^2=ab+(|I|-a)(|I'|-a)\geq ab>0.
\]
Hence the joint density $f$ of $(Z,Z')$ satisfies
\[
  f(z,z')\leq\frac{1}{(2\pi)^2(|I||I'|-a^2)}\leq\frac{1}{(2\pi)^2ab}.
\]
Hence
\[
  \bbP(\abs{Z}<r,\abs{Z'}<r)
  =\int_{B(0,r)}\int_{B(0,r)}f(z,z')\dd z'\dd z
  \leq\frac{(\pi r^2)^2}{(2\pi)^2ab}
  =\frac{r^4}{4ab}.
\]
We may assume, relabelling if necessary, that $|I|\geq|I'|$, so $|I|\geq(a+b)/2$. Since
\[
  \{\abs{Z}<r,\abs{Z'}<r\}\subseteq\{\abs{Z}<r\},
\]
we obtain
\[
  \bbP(\abs{Z}<r,\abs{Z'}<r)
  \leq\int_{B(0,r)}\frac{e^{-\abs{z}^2/(2|I|)}}{2\pi|I|}\dd z
  =1-e^{-r^2/(2|I|)}
  \leq\frac{r^2}{2|I|}\leq\frac{r^2}{a+b}.
\]
Together with the trivial full probability bound, these inequalities yield
\begin{align*}
  \bbP(\abs{Z}<r,\abs{Z'}<r)
  &\leq\min\left\{1,\frac{r^2}{a+b},\frac{r^4}{4ab}\right\}\\
  &\leq\min\left\{1,\frac{r^2}{a},\frac{r^2}{b},\frac{r^4}{ab}\right\}
  =\min\left\{1,\frac{r^2}{a}\right\}\min\left\{1,\frac{r^2}{b}\right\}.
\end{align*}

The Frostman bound and $\mu(\bbR)=1$ give, for $a,b>0$,
\[
  \mu^4\set{0<|I\cap I'|<a,\ |I\mathbin{\triangle}I'|<b}
  \leq4!A^3\min\{1,a\}^s\min\{1,b\}^{2s}.
\]
These bounds and Fubini's theorem give
\begin{align*}
  \int_{\set{I\cap I'\neq\varnothing}}
    \bbP(\abs{Z}<r,\abs{Z'}<r)\dd\mu^4
  &\leq r^4\int_{r^2}^\infty\int_{r^2}^\infty
    \mu^4\set{0<|I\cap I'|<a,\ |I\mathbin{\triangle}I'|<b}
    \frac{\dd b\dd a}{a^2b^2}\\
  &\leq4!A^3
    \left(r^2\int_{r^2}^\infty\frac{\min\{1,a\}^s}{a^2}\dd a\right)
    \left(r^2\int_{r^2}^\infty\frac{\min\{1,b\}^{2s}}{b^2}\dd b\right).
\end{align*}
Their product is at most $C V_s(r)$, since
\begin{align*}
  r^2\int_{r^2}^\infty\frac{\min\{1,a\}^s}{a^2}\dd a
  &=r^2+r^2\int_{r^2}^1a^{s-2}\dd a
  =\frac{r^{2s}-sr^2}{1-s}\leq\frac{r^{2s}}{1-s},
\end{align*}
and
\begin{align*}
  r^2\int_{r^2}^\infty\frac{\min\{1,b\}^{2s}}{b^2}\dd b
  &=r^2+r^2\int_{r^2}^1b^{2s-2}\dd b\\*
  &\leq
  \begin{cases}
    r^{4s}/(1-2s),&0<s<1/2,\\
    r^2(1+2\log(1/r)),&s=1/2,\\
    2sr^2/(2s-1),&1/2<s<1.
  \end{cases}
\end{align*}
These are the required bounds, up to a multiplicative factor.
\end{proof}

\begin{theorem}\label{thm:variance}
Let $0<s<1$ and let $\mu$ be an $s$-Frostman probability measure on $[0,1]$ with Frostman
constant $A\geq1$. There is a constant $C>0$, depending only on $s$ and $A$, such that
\[
  \Var\bigl(\cC_r(\BM_*\mu)\bigr)\leq C V_s(r),
  \qquad 0<r\leq1.
\]
Moreover, $V_s(r)=o(r^{4s})$ as $r\to0$.
\end{theorem}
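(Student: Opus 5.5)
We write the variance as a four-fold integral and split it according to whether the two time intervals overlap. Expanding $\cC_r(\BM_*\mu)=\int\mathbf 1\{\abs{\BM_u-\BM_t}<r\}\dd\mu(u)\dd\mu(t)$ and applying Fubini's theorem gives
\[
  \Var\bigl(\cC_r(\BM_*\mu)\bigr)
  =\int_{[0,1]^4}\Bigl(\bbP(\abs{Z}<r,\abs{Z'}<r)-\bbP(\abs{Z}<r)\bbP(\abs{Z'}<r)\Bigr)\dd\mu^4(u,t,u',t'),
\]
with $Z,Z',I,I'$ as in \cref{thm:four-point-integral}. We then split $[0,1]^4$ into $\set{I\cap I'=\varnothing}$ and $\set{I\cap I'\neq\varnothing}$. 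Repeated endpoints form a $\mu^4$-null set because $\mu$ is atomless, so they can be ignored.

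On $\set{I\cap I'=\varnothing}$ the open intervals are disjoint, so $Z$ and $Z'$ are increments of $\BM$ over disjoint time intervals (they may share an endpoint). By independent increments they are independent, and the integrand vanishes identically. This is the key structural point: only overlapping pairs contribute.

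On $\set{I\cap I'\neq\varnothing}$ we discard the non-positive subtracted term and bound the integrand by $\bbP(\abs{Z}<r,\abs{Z'}<r)$. \Cref{thm:four-point-integral} then bounds this part by $CV_s(r)$, with $C$ depending only on $s$ and $A$. This proves the variance bound for $0<r\leq1$.

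It remains to check $V_s(r)=o(r^{4s})$.
\begin{itemize}
  \item For $0<s<1/2$, $r^{6s}/r^{4s}=r^{2s}\to0$.
  \item For $s=1/2$, $r^3(1+\log(1/r))/r^2=r(1+\log(1/r))\to0$.
  \item For $1/2<s<1$, $r^{2s+2}/r^{4s}=r^{2-2s}\to0$ because $s<1$.
\end{itemize}

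The substantive work is already contained in \cref{thm:four-point-integral}. The only step here needing care is the independence claim on the non-overlapping set. It should be justified explicitly when the intervals share an endpoint, for example $t=u'$, where independent increments still apply because the open intervals are disjoint.
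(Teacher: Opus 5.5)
Your proposal is correct and follows the paper's proof essentially verbatim. You expand the variance via Fubini, note that the covariance integrand vanishes when the open intervals $I,I'$ are disjoint (independent Brownian increments), bound it by $\bbP(\abs{Z}<r,\abs{Z'}<r)$ otherwise and invoke \cref{thm:four-point-integral}, then check $V_s(r)/r^{4s}\to0$ case by case; your explicit remark about shared endpoints is a harmless detail the paper leaves implicit.
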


\begin{proof}
Expanding the variance and applying Fubini's theorem gives
\[
  \Var\bigl(\cC_r(\BM_*\mu)\bigr)=\int\left(
  \bbP(\abs{Z}<r,\abs{Z'}<r)
  -\bbP(\abs{Z}<r)\bbP(\abs{Z'}<r)
  \right)\dd\mu^4.
\]
By independence of disjoint Brownian increments, the integrand vanishes when
$I\cap I'=\varnothing$. Otherwise it is at most $\bbP(\abs{Z}<r,\abs{Z'}<r)$, so
\cref{thm:four-point-integral} gives the variance bound. In each case of
\cref{eq:variance-scale}, $V_s(r)/r^{4s}\to0$ as $r\to0$.
\end{proof}

\section{Uniform concentration and mutual singularity}\label{sec:concentration}

Let $s$, $\mu$ and $A$ be as in \cref{sec:variance}. For $t\geq0$, put
\[
  Y_\mu(t)\coloneqq e^{2st}\cC_{e^{-t}}(\BM_*\mu).
\]
Then $\bbE Y_\mu(t)=H_\mu^s(t)$ by \cref{eq:expected-profile}. Moreover, \cref{thm:variance} gives a
constant $C>0$, depending only on $s$ and $A$, such that, for every $t\geq0$,
\begin{equation}\label{eq:y-variance}
  \Var(Y_\mu(t))\leq C
  \begin{cases}
    e^{-2st},&0<s<1/2,\\
    (1+t)e^{-t},&s=1/2,\\
    e^{-2(1-s)t},&1/2<s<1.
  \end{cases}
\end{equation}
This quickly decaying variance bound, gives almost sure convergence by a standard argument:

\begin{theorem}\label{thm:uniform-concentration}
Let $0<s<1$ and let $\mu$ be an $s$-Frostman probability measure on $[0,1]$. Then
\[
  \limsup_{t\to\infty}\abs{Y_\mu(t)-H_\mu^s(t)}=0
  \qquad\text{almost surely}.
\]
\end{theorem}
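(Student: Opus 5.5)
We first obtain almost sure convergence along a discrete sequence of times using Chebyshev's inequality and Borel--Cantelli. Then we pass to all $t\geq0$ using monotonicity of $r\mapsto\cC_r$ together with uniform continuity of $H_\mu^s$.

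\emph{Step 1 (a sparse lattice).} By \cref{eq:y-variance} there is $\kappa>0$, depending only on $s$, with $\Var(Y_\mu(t))\leq C(1+t)e^{-\kappa t}$. Concretely, $\kappa=2s$, $1$ or $2(1-s)$ in the three cases. We take the fine grid $t_n=n h$ with a fixed mesh $h>0$; since the variance decays exponentially, no sparse subsequence is needed. For fixed $\eps>0$, Chebyshev's inequality gives
\[
  \sum_{n\geq0}\bbP\bigl(\abs{Y_\mu(t_n)-H_\mu^s(t_n)}>\eps\bigr)
  \leq\eps^{-2}\sum_{n\geq0}C(1+nh)e^{-\kappa nh}<\infty .
\]
Borel--Cantelli then yields $\limsup_n\abs{Y_\mu(t_n)-H_\mu^s(t_n)}\leq\eps$ almost surely. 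Running over $\eps=1/k$ and over the countably many meshes $h=1/j$, we get a single full-probability event on which $Y_\mu(nh)-H_\mu^s(nh)\to0$ for every $h=1/j$.

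\emph{Step 2 (interpolation).} For $t\in[t_n,t_{n+1}]$, the map $r\mapsto\cC_r(\BM_*\mu)$ is non-decreasing, and $e^{-t_{n+1}}\leq e^{-t}\leq e^{-t_n}$. Hence
\[
  e^{-2sh}Y_\mu(t_n)\;\leq\;Y_\mu(t)\;\leq\;e^{2sh}Y_\mu(t_{n+1}).
\]
The profile $H_\mu^s$ is bounded, say by $B$, and uniformly continuous on $\bbR$ by the remark after \cref{eq:h-definition}. Write $\omega(h)$ for its modulus of continuity, so that $\abs{H_\mu^s(t)-H_\mu^s(t_n)}\leq\omega(h)$. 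Combining this with Step~1 gives, almost surely,
\[
  \limsup_{t\to\infty}\abs{Y_\mu(t)-H_\mu^s(t)}\leq (e^{2sh}-1)B+\omega(h)+(1-e^{-2sh})B .
\]
Letting $h=1/j\to0$ gives the claim.

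The main obstacle is this interpolation step. The lower and upper bounds involve two different grid points, and we need the deterministic comparison $H_\mu^s(t_{n+1})\approx H_\mu^s(t)$ to hold uniformly in $t$. Pointwise continuity is not enough; this is exactly why we invoke uniform continuity and boundedness of $H_\mu^s$, both of which follow from integrability of $\phi$ and boundedness of $G$.
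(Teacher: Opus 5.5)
Your proof follows the paper's argument: Chebyshev and Borel--Cantelli on the grids $t_n=n/j$, which is summable by the exponential variance decay in \cref{eq:y-variance}. You then intersect over countably many meshes, interpolate by monotonicity, and use boundedness and uniform continuity of $H_\mu^s$.

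There is one slip in Step~2: the sandwich inequality has the grid points reversed. For $t\in[t_n,t_{n+1}]$, monotonicity gives $\cC_{e^{-t_{n+1}}}(\BM_*\mu)\leq\cC_{e^{-t}}(\BM_*\mu)\leq\cC_{e^{-t_n}}(\BM_*\mu)$. Hence
\[
  e^{-2sh}Y_\mu(t_{n+1})\leq Y_\mu(t)\leq e^{2sh}Y_\mu(t_n),
\]
which is exactly \cref{eq:monotone-fill}. Your lower bound $e^{-2sh}Y_\mu(t_n)\leq Y_\mu(t)$ would instead need a doubling-type estimate $\cC_{e^{-t_n}}(\BM_*\mu)\leq e^{2s(h+t-t_n)}\cC_{e^{-t}}(\BM_*\mu)$ for the random correlation integral. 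Monotonicity does not give this, since it points the other way, and your upper bound fails for the same reason.

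The fix costs nothing. With the corrected sandwich, the rest of Step~2 goes through verbatim, and your final estimate remains valid, with a sum where a maximum would suffice.
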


\begin{proof}
Fix an integer $m\geq1$ and put $t_{j,m}\coloneqq j/m$. By \cref{eq:y-variance},
\[
  \sum_{j=0}^\infty\Var(Y_\mu(t_{j,m}))<\infty.
\]
One can then estimate $\bbP\{\abs{Y_\mu(t)-H_\mu^s(t)}>\eps\}$ by Chebyshev's inequality and apply
the Borel--Cantelli lemma, giving
\begin{equation}\label{eq:grid-convergence}
  Y_\mu(t_{j,m})-H_\mu^s(t_{j,m})\to0
  \qquad\text{almost surely as }j\to\infty.
\end{equation}

If $t\in[t_{j,m},t_{j+1,m}]$, monotonicity of $r\mapsto\cC_r(\BM_*\mu)$ gives
\begin{equation}\label{eq:monotone-fill}
  e^{-2s/m}Y_\mu(t_{j+1,m})\leq Y_\mu(t)\leq e^{2s/m}Y_\mu(t_{j,m}).
\end{equation}
Combining \cref{eq:grid-convergence,eq:monotone-fill} gives, almost surely,
\[
  \limsup_{t\to\infty}\abs{Y_\mu(t)-H_\mu^s(t)}
  \leq (e^{2s/m}-1)\sup_{t\geq0}H_\mu^s(t)
  +\sup_{\substack{t,u\geq0\\\abs{t-u}\leq1/m}}
  \abs{H_\mu^s(t)-H_\mu^s(u)}.
\]
Taking the countable intersection over $m$ and using the boundedness of $H_\mu^s$
and its uniform continuity from \cref{eq:h-definition} gives the desired limit.
\end{proof}

\begin{proof}[Proof of \cref{thm:main}]
For $i\in\set{1,2}$, define
\[
  \cE_i\coloneqq
  \set{\nu\in\cP(\bbR^2)\colon
  \lim_{n\to\infty}\sup_{\substack{q\in\bbQ\\q\geq n}}
  \abs{e^{2sq}\cC_{e^{-q}}(\nu)-H_{\mu_i}^s(q)}=0}.
\]
Each $\cE_i$ is Borel, since $\nu\mapsto\cC_r(\nu)$ is Borel for every $r>0$.
Suppose that $\nu\in\cE_1\cap\cE_2$. The triangle inequality gives
\[
  \lim_{n\to\infty}\sup_{\substack{q\in\bbQ\\q\geq n}}
  \abs{H_{\mu_1}^s(q)-H_{\mu_2}^s(q)}=0.
\]
Both expected profiles are uniformly continuous by \cref{eq:h-definition}.
Density of the rationals therefore implies $H_{\mu_1}^s(t)-H_{\mu_2}^s(t)\to0$ as
$t\to\infty$, contradicting the assumption of distinct profiles, \cref{eq:profile-separation}. Hence
$\cE_1\cap\cE_2=\varnothing$. 

Further, by \cref{thm:uniform-concentration},
\[
  \Law(\BM_*\mu_i)(\cE_i)=1.
\]
Hence the two laws give
full measure to the respective disjoint Borel sets $\cE_1$ and $\cE_2$, which proves
that they are mutually singular.
\end{proof}

We now distinguish lattice and non-lattice measures.
\begin{proof}[Proof of \cref{thm:cantor-application}]
The IFS defining $\muA$ is of lattice type, so \cref{thm:profile-asymptotics}
gives a non-constant periodic asymptotic profile for $H_{\muA}^s$, while $H_{\muB}^s$ converges
to a constant. Thus \cref{eq:profile-separation} holds, and \cref{thm:main} gives mutual
singularity.
\end{proof}

\section{Minkowski reconstruction from the image set}\label{sec:reconstruction}
In this section we prove \cref{thm:minkowski-reconstruction}. 
Fix a self-similar IFS, and use the notation of \cref{sec:renewal}.

\subsection{Neighbourhood estimates}

Write $E=\BM(K)$.
For $x,y\in\bbR^2$, a non-empty compact set $F\subset\bbR^2$, and $r,q>0$, the cut-off functions
and their integrals satisfy
\begin{equation}\label{eq:neighbourhood-union-scaling}
  \begin{split}
    \left(1-\frac{\dist(x,E)}r\right)_+
    &=\max_{1\leq i\leq N}\left(1-\frac{\dist(x,\BM(S_i(K)))}r\right)_+,\\
    M_r(y+qF)
    &=\int_{\bbR^2}\left(1-\frac{\dist(x,y+qF)}r\right)_+\dd x
      =q^2M_{r/q}(F).
  \end{split}
\end{equation}
We define the \emph{overlap defect} by
\[
  D_r\coloneqq\sum_{i=1}^NM_r(\BM(S_i(K)))-M_r(E).
\]
Fix $x\in\bbR^2$. If $x$ belongs to exactly $k\geq1$ of the neighbourhoods
$\langle\BM(S_i(K))\rangle_r$, exactly $k$ of the cut-off functions are non-zero at $x$.
There are $\binom{k}{2}$ pairs of neighbourhoods containing $x$. Thus the sum of the
indicators of their pairwise intersections equals $\binom{k}{2}\geq k-1$ at $x$.
If $x$ belongs to no neighbourhood, every cut-off function and every such indicator is zero
at $x$. By the first identity in \cref{eq:neighbourhood-union-scaling}, we therefore have
\[
  \begin{split}
    0&\leq\sum_{i=1}^N\left(1-\frac{\dist(x,\BM(S_i(K)))}r\right)_+
      -\left(1-\frac{\dist(x,E)}r\right)_+\\
    &\leq\sum_{1\leq i<j\leq N}
      \mathbf{1}_{\langle\BM(S_i(K))\rangle_r\cap\langle\BM(S_j(K))\rangle_r}(x).
  \end{split}
\]
Integrating this over $\bbR^2$ gives
\begin{equation}\label{eq:neighbourhood-defect-elementary}
  \begin{split}
    0\leq D_r
    &=\int_{\bbR^2}\left[
      \sum_{i=1}^N\left(1-\frac{\dist(x,\BM(S_i(K)))}r\right)_+
      -\left(1-\frac{\dist(x,E)}r\right)_+\right]\dd x\\
    &\leq\sum_{1\leq i<j\leq N}\int_{\bbR^2}
      \mathbf{1}_{\langle\BM(S_i(K))\rangle_r\cap\langle\BM(S_j(K))\rangle_r}(x)\dd x\\
    &=\sum_{1\leq i<j\leq N}
      \cL^2(\langle\BM(S_i(K))\rangle_r\cap\langle\BM(S_j(K))\rangle_r).
  \end{split}
\end{equation}

We next bound the moments of the neighbourhood area and the cut-off mass.

\begin{lemma}\label{thm:neighbourhood-moments}
For every $q\geq1$, there is $C_q>0$ such that, for $0<r\leq1$,
\begin{equation}\label{eq:neighbourhood-moment-bound}
  \bbE\bigl[\cL^2(\langle E\rangle_r)^q\bigr]
  +\bbE\bigl[M_r(E)^q\bigr]
  \leq C_qr^{q(2-2s)}.
\end{equation}
There is also $c>0$ such that, for $0<r\leq1$,
\begin{equation}\label{eq:neighbourhood-mean-lower-bound}
  \bbE M_r(E)\geq cr^{2-2s}.
\end{equation}
\end{lemma}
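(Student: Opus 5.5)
Since $\langle E\rangle_{r/2}\subseteq\{1-\dist(x,E)/r\geq1/2\}$ and the cut-off is supported in $\langle E\rangle_r$ and bounded by one, we have
\[
  \tfrac12\cL^2(\langle E\rangle_{r/2})\leq M_r(E)\leq\cL^2(\langle E\rangle_r).
\]
So the upper bound \cref{eq:neighbourhood-moment-bound} reduces to moments of $\cL^2(\langle E\rangle_r)$, and the lower bound \cref{eq:neighbourhood-mean-lower-bound} reduces to a lower bound on $\bbE\cL^2(\langle E\rangle_{r/2})$.

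\emph{Upper bound.} Fix $u=r^2$ and use the stopping family $\cW(u)$ from \cref{eq:stopping-family}, which has cardinality at most $C r^{-2s}$ because $p_w=r_w^s\geq (r_{\min}u)^s$ and $\sum_w p_w=1$. Then $E\subseteq\bigcup_{w\in\cW(u)}\BM(I_w)$, and each $\BM(I_w)$ lies in a ball about $\BM_{S_w(0)}$ of radius $R_w=\sup_{t\in I_w}\abs{\BM_t-\BM_{S_w(0)}}$. This gives
\[
  \cL^2(\langle E\rangle_r)\leq\pi\sum_{w\in\cW(u)}(R_w+r)^2 .
\]
By Brownian scaling, $R_w$ has the law of $\sqrt{r_w}\,R$, where $R=\sup_{[0,1]}\abs{\BM}$ has Gaussian tails. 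Since $r_w\leq r^2$, each $(R_w+r)^2$ has $q$-th moment at most $C_qr^{2q}$. Minkowski's inequality in $L^q$ then gives
\[
  \norm{\cL^2(\langle E\rangle_r)}_q\leq C r^{-2s}\cdot C_q r^2,
\]
which is exactly $r^{2-2s}$ to the power $q$ after raising. This step is routine.

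\emph{Lower bound.} This is the main obstacle, since it needs the pieces to not overlap too much in the plane. I would use a second-moment (Cauchy–Schwarz) argument on an occupation-type quantity. Let $\rho=r/2$ and take the random function
\[
  f(x)=\int\mathbf 1\{\abs{x-\BM_t}<\rho\}\dd\mu(t).
\]
It is supported in $\langle E\rangle_\rho$, and $\int f\dd x=\pi\rho^2$. Cauchy–Schwarz gives
\[
  \pi^2\rho^4\leq\cL^2(\langle E\rangle_\rho)\int f^2\dd x,
\]
and
\[
  \int f^2\dd x=\iint\cL^2\bigl(B(\BM_t,\rho)\cap B(\BM_{t'},\rho)\bigr)\dd\mu\dd\mu\leq\pi\rho^2\,\cC_{2\rho}(\BM_*\mu).
\]
Hence $\cL^2(\langle E\rangle_\rho)\geq \pi\rho^2/\cC_{2\rho}(\BM_*\mu)$. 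Taking expectations and applying Jensen ($x\mapsto1/x$ is convex) yields
\[
  \bbE\cL^2(\langle E\rangle_\rho)\geq\pi\rho^2/\bbE\cC_{r}(\BM_*\mu).
\]
By \cref{thm:gaussian-reduction} and the Frostman bound \cref{eq:correlation-frostman}, or equivalently the boundedness of $H_\mu^s$ from \cref{eq:h-definition}, we have $\bbE\cC_r(\BM_*\mu)\leq Cr^{2s}$. This gives $\bbE\cL^2(\langle E\rangle_{r/2})\geq c r^{2-2s}$, and combined with the first display this proves \cref{eq:neighbourhood-mean-lower-bound}. The only delicate points are that $\mu$ is supported on $K$, so $f$ vanishes off $\langle E\rangle_\rho$, and that $r\leq1$ keeps all constants uniform.
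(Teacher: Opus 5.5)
Your proposal is correct and follows the paper's proof essentially step for step. For the moment bound you use the same stopping cover at scale $r^2$ with Brownian scaling; you apply Minkowski's inequality in $L^q$ where the paper uses the equivalent convexity bound $(\sum_w a_w)^q\leq(\#\cW(r^2))^{q-1}\sum_w a_w^q$. For the lower bound you use the same Cauchy--Schwarz argument on $x\mapsto\BM_*\mu(B(x,r/2))$, followed by Jensen's inequality and the boundedness of $H_\mu^s$. Your Cauchy--Schwarz inequality already forces $\cC_r(\BM_*\mu)>0$, so the division needs no separate justification.
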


\begin{proof}
We apply the stopping construction of \cref{eq:stopping-family} at scale $r^2$, giving
\begin{equation}\label{eq:neighbourhood-stopping-count}
  \#\cW(r^2)\leq Cr^{-2s}.
\end{equation}
For each $w\in\cW(r^2)$, choose an endpoint $t_w$ of $S_w([0,1])$ and put
$\omega_w\coloneqq\sup_{t\in S_w([0,1])}\abs{\BM_t-\BM_{t_w}}$. The resulting discs
$\overline B(\BM_{t_w},\omega_w)$ cover $E$. Enlarging their radii by $r$ gives
\begin{equation}\label{eq:neighbourhood-cover-bound}
  M_r(E)\leq\cL^2(\langle E\rangle_r)
  \leq\pi\sum_{w\in\cW(r^2)}(r+\omega_w)^2.
\end{equation}
Brownian scaling and the maximal inequality give, for every $w\in\cW(r^2)$,
\[
  \bbE(r+\omega_w)^{2q}\leq C_qr^{2q}.
\]
By convexity,
\[
  \left(\sum_w(r+\omega_w)^2\right)^q
  \leq\bigl(\#\cW(r^2)\bigr)^{q-1}\sum_w(r+\omega_w)^{2q}.
\]
Taking expectations and using \cref{eq:neighbourhood-stopping-count,eq:neighbourhood-cover-bound}
proves
\cref{eq:neighbourhood-moment-bound}.

For the lower bound, let $\nu\coloneqq\BM_*\mu$ and define
$h_{r/2}\colon\bbR^2\to[0,1]$ by $h_{r/2}(x)\coloneqq\nu(B(x,r/2))$. Since $h_{r/2}$ vanishes
outside $\langle E\rangle_{r/2}$, Fubini's theorem gives
\begin{align*}
  \int_{\bbR^2}h_{r/2}(x)\dd x&=\frac{\pi r^2}{4},\\
  \int_{\bbR^2}h_{r/2}(x)^2\dd x
  &=\iint\cL^2(B(y,r/2)\cap B(z,r/2))\dd\nu(y)\dd\nu(z)\\
  &\leq\frac{\pi r^2}{4}\cC_r(\nu).
\end{align*}
Some ball of radius $r/2$ in a finite cover of the support of $\nu$ has positive $\nu$-measure.
Since $\cC_r(\nu)\geq\nu(B(x,r/2))^2$ for every $x\in\bbR^2$, we have $\cC_r(\nu)>0$.
Cauchy--Schwarz therefore gives
\begin{equation}\label{eq:neighbourhood-correlation-lower-bound}
  \cL^2(\langle E\rangle_{r/2})\geq\frac{\pi r^2}{4\cC_r(\nu)}.
\end{equation}
Since $M_r(E)\geq\frac12\cL^2(\langle E\rangle_{r/2})$, Jensen's inequality and
\cref{eq:neighbourhood-correlation-lower-bound} give
\[
  \bbE M_r(E)
  \geq\frac{\pi r^2}{8}\bbE\left[\frac{1}{\cC_r(\nu)}\right]
  \geq\frac{\pi r^2}{8\bbE \cC_r(\nu)}.
\]
For $0<r\leq1$, \cref{eq:expected-profile} and the boundedness of $H_\mu^s$ give
$\bbE\cC_r(\nu)\leq Cr^{2s}$ and hence \cref{eq:neighbourhood-mean-lower-bound}.
\end{proof}

We next show that the expected overlap defect is of smaller order than the expected
neighbourhood mass.
Let $\eta\in(0,1-s)$ be the exponent from \cref{thm:cross-piece-mass}.

\begin{lemma}\label{thm:neighbourhood-overlap}
There exists $C>0$ such that, for all distinct $i,j\in\{1,\ldots,N\}$ and all $0<r\leq1$,
\[
  \bbE\cL^2(\langle\BM(S_i(K))\rangle_r\cap\langle\BM(S_j(K))\rangle_r)
  \leq Cr^{2-2s+2\eta}.
\]
Moreover,
\begin{equation}\label{eq:neighbourhood-defect-mean}
  \bbE D_r\leq Cr^{2-2s+2\eta}.
\end{equation}
\end{lemma}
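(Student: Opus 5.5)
We would bound the intersection area by a double sum over small pieces of the two images, estimate each term in expectation using the independence of Brownian increments over disjoint time intervals, and then recognise the resulting sum as a weighted cross-correlation integral of $\mu$, which \cref{thm:cross-piece-mass} controls.

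\emph{Covering.} Fix $i\neq j$ and take the stopping family $\cW(r^2)$ from \cref{eq:stopping-family}. Write $\cW_i$ for the words in $\cW(r^2)$ beginning with $i$, and similarly $\cW_j$; their intervals $I_v$ cover $S_i(K)$ and $S_j(K)$ respectively. For each pair $(v,w)\in\cW_i\times\cW_j$ we choose reference times adapted to the pair: if $I_v$ lies to the left of $I_w$, take $t_v$ to be the right endpoint of $I_v$ and $t_w$ the left endpoint of $I_w$, and symmetrically otherwise. As in \cref{thm:neighbourhood-moments}, put $\omega_v=\sup_{t\in I_v}\abs{\BM_t-\BM_{t_v}}$, and define $\omega_w$ in the same way. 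Since $\langle \BM(I_v)\rangle_r\subseteq B(\BM_{t_v},r+\omega_v)$, we get
\[
  \cL^2(\langle\BM(S_i(K))\rangle_r\cap\langle\BM(S_j(K))\rangle_r)
  \leq\pi\sum_{v\in\cW_i}\sum_{w\in\cW_j}(r+\omega_v)^2\,
  \mathbf 1\set{\abs{\BM_{t_v}-\BM_{t_w}}<2r+\omega_v+\omega_w}.
\]

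\emph{One pair.} Let $g_{vw}$ be the gap between $I_v$ and $I_w$, that is, the distance between $t_v$ and $t_w$. When the interiors of $I_v$ and $I_w$ are disjoint, the three random quantities $\omega_v$, $\omega_w$ and the increment $\BM_{t_w}-\BM_{t_v}$ over the gap are independent. Conditioning on $\omega_v$ and $\omega_w$, the Gaussian density bound from the proof of \cref{thm:gaussian-reduction} gives conditional probability at most $\min\{1,C(r+\omega_v+\omega_w)^2/g_{vw}\}$. Brownian scaling shows that $\omega_v$ and $\omega_w$ have Gaussian tails at scale $\abs{I_v}^{1/2}\leq r$, so
\[
  \bbE\bigl[(r+\omega_v)^2\mathbf 1\set{\cdots}\bigr]\leq Cr^2\min\set{1,r^2/g_{vw}}.
\]
If the intervals overlap, we simply use the bound $\bbE(r+\omega_v)^2\leq Cr^2$. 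By the bounded-overlap property of $\cW(r^2)$, each $I_v$ overlaps at most $M$ of the intervals $I_w$, so these pairs are few.

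\emph{Summation.} For $x\in I_v$ and $y\in I_w$ we have $\abs{x-y}\leq g_{vw}+2r^2$, and hence $\min\{1,r^2/g_{vw}\}\leq 3\min\{1,r^2/\abs{x-y}\}$. We also have $p_v,p_w\geq c r^{2s}$ by \cref{eq:stopping-family}, and the self-similar decomposition shows that the mass of $\mu\times\mu$ on $(S_vK)\times(S_wK)$ is at least $p_vp_w$, up to bounded overlap. Combining these, the double sum is at most
\[
  Cr^{2-4s}\sum_{v,w}p_vp_w\min\{1,r^2/g_{vw}\}
  \leq Cr^{2-4s}\iint_{S_iK\times S_jK}\min\set{1,\tfrac{r^2}{\abs{x-y}}}\dd\mu\dd\mu .
\]
Since $\mu$ restricted to $S_iK$ is $p_i(S_i)_*\mu$, the last integral is a Stieltjes integral against a function dominated by $\delta\mapsto\cC^\times_\delta(\mu)$. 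Integrating by parts and applying \cref{eq:cross-piece-mass} bounds it by
\[
  C\cC^\times_{r^2}(\mu)+r^2\int_{r^2}^1\delta^{-2}\cC^\times_\delta(\mu)\dd\delta+Cr^2
  \leq Cr^{2(s+\eta)}+Cr^2.
\]
The integral converges at the right rate precisely because $s+\eta<1$. Since also $r^2\leq r^{2(s+\eta)}$, the total is at most $Cr^{2-4s}r^{2s+2\eta}=Cr^{2-2s+2\eta}$. Finally, \cref{eq:neighbourhood-defect-mean} follows by summing this bound over the finitely many pairs $i<j$ in \cref{eq:neighbourhood-defect-elementary}.

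\emph{Main obstacle.} The hardest step is the single-pair estimate, and specifically avoiding any Cauchy--Schwarz step. Applying Cauchy--Schwarz to $(r+\omega_v)^2\mathbf 1\{\cdots\}$ would replace $\min\{1,r^2/g\}$ by its square root and destroy the exponent. This is why we choose the inner endpoints as reference times, so that exact independence holds across the gap. The remaining care is in the bookkeeping that compares the discrete weights $p_vp_w$ with $\mu\times\mu$ when the intervals $I_w$ overlap, which the constant $M$ from \cref{sec:osc-geometry} absorbs.
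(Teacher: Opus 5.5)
Your proposal is correct and follows essentially the paper's route: the stopping family $\cW(r^2)$, a per-pair bound $Cr^2\min\{1,r^2/d_{wv}\}$ (the paper's $Cr^4/(r^2+d_{wv})$) from independence across the gap and the Gaussian density bound, and then summation against $\cC^\times_\delta(\mu)\leq C\delta^{s+\eta}$ using $p_vp_w\geq cr^{4s}$ and $s+\eta<1$. The paper gets the per-pair step from $\bbE\cL^2(A_w\cap A_v)\leq(2\pi d_{wv})^{-1}\bbE\cL^2(A_w)\,\bbE\cL^2(A_v)$ with $A_w=\langle\BM(S_wK)\rangle_r$, and sums by a dyadic count $N_{ij}(h)$ of close pairs, where you use ball covering and integration by parts; one correction is that bounded overlap alone gives $O(r^{-2s})$ overlapping pairs, hence a contribution $O(r^{2-2s})$ that misses the factor $r^{2\eta}$, so these pairs are not ``few'' for that reason and must be charged to $\cC^\times_{3r^2}(\mu)$, which your summation step (with $g_{vw}=0$) and the paper's $N_{ij}(r^2)$ bound both do.
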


\begin{proof}
Fix $i\neq j$. The intersection is contained in $\langle E\rangle_r$, so for $r^2\geq r_{\min}$,
\cref{thm:neighbourhood-moments} bounds its expected area by
$Cr^{2-2s}\leq Cr_{\min}^{-\eta}r^{2-2s+2\eta}$.
We may therefore assume $r^2<r_{\min}$ and consider the stopping set $\cW(r^2)$.
For $w,v\in\cW(r^2)$, put
\[
  A_w=\langle\BM(S_wK)\rangle_r,
  \qquad d_{wv}=\dist(I_w,I_v),
\]
where $I_v = S_v([0,1])$.
Since each interval $I_w$ has length at most $r^2$, the Brownian maximal estimate used in
\cref{eq:neighbourhood-cover-bound} gives $\bbE\cL^2(A_w)\leq Cr^2$.
We claim that
\begin{equation}\label{eq:stopping-brownian-overlap}
  \bbE\cL^2(A_w\cap A_v)\leq\frac{Cr^4}{r^2+d_{wv}}.
\end{equation}
For $d_{wv}\leq r^2$, this follows by bounding the intersection by $A_w$.
For $d_{wv}>r^2$, we use the Gaussian-density argument from the proof of
\cref{thm:four-point-integral} which similarly gives
\begin{align*}
  \bbE\cL^2(A_w\cap A_v)
  &\leq\frac{1}{2\pi d_{wv}}\bbE\bigl[\cL^2(A_w)\cL^2(A_v)\bigr]\\
  &=\frac{1}{2\pi d_{wv}}
       \bbE\cL^2(A_w)\,\bbE\cL^2(A_v)
  \leq\frac{Cr^4}{d_{wv}},
\end{align*}
which proves \cref{eq:stopping-brownian-overlap}.

For $h\geq r^2$, let $N_{ij}(h)$ count pairs $(w,v)\in\cW(r^2)^2$ whose first
symbols are $i,j$, respectively, and for which $d_{wv}\leq h$. Every pair of points in
$S_wK\times S_vK$ then has distance at most $h+2r^2\leq3h$.
The cross correlation bound \cref{eq:cross-piece-mass} extends to all positive scales after increasing $C$,
since $\cC^\times_\delta(\mu)\leq1$.
Decompose with the stopping family
and using $p_wp_v\geq r_{\min}^{2s}r^{4s}$, gives
\begin{equation}\label{eq:stopping-close-pairs}
  \begin{split}
    r_{\min}^{2s}r^{4s}N_{ij}(h)
    &\leq\sum_{\substack{w,v\in\cW(r^2)\\w_1=i,\ v_1=j,\ d_{wv}\leq h}}p_wp_v\\
    &\leq\cC^\times_{3h}(\mu)\leq Ch^{s+\eta}.
  \end{split}
\end{equation}

We group the pairs with $d_{wv}\leq r^2$ together and use the ranges
$2^{k-1}r^2<d_{wv}\leq2^kr^2$ for $k\geq1$. Using subadditivity,
\cref{eq:stopping-brownian-overlap,eq:stopping-close-pairs}, and $s+\eta<1$, we obtain
\begin{align*}
  \bbE\cL^2(\langle\BM(S_iK)\rangle_r\cap\langle\BM(S_jK)\rangle_r)
  &\leq Cr^4\sum_{k=0}^\infty\frac{N_{ij}(2^kr^2)}{2^kr^2}\\
  &\leq Cr^{2-2s+2\eta}\sum_{k=0}^\infty2^{k(s+\eta-1)}
  =\frac{Cr^{2-2s+2\eta}}{1-2^{s+\eta-1}}.
\end{align*}
Lastly, by \cref{eq:neighbourhood-defect-elementary},
\begin{align*}
  \bbE D_r
  &\leq\sum_{1\leq i<j\leq N}
    \bbE\cL^2(\langle\BM(S_iK)\rangle_r\cap\langle\BM(S_jK)\rangle_r)\\
  &\leq\frac{C\binom{N}{2}}{1-2^{s+\eta-1}}r^{2-2s+2\eta},
\end{align*}
which finishes the proof.
\end{proof}

\subsection{Renewal and concentration}

We first show that the expected neighbourhood mass is asymptotically unchanged by the
Brownian scaling shifts.
For $t\geq0$, put
\[
  Q(t)\coloneqq e^{(2-2s)t}\bbE M_{e^{-t}}(E).
\]

\begin{lemma}\label{thm:neighbourhood-renewal}
There exist constants $0<c\leq C<\infty$ such that $c\leq Q(t)\leq C$ for every $t\geq0$.
Moreover, for every $i$,
\begin{equation}\label{eq:neighbourhood-renewal-shift}
  Q(t-\alpha_i/2)-Q(t)\to0
  \qquad\text{as }t\to\infty.
\end{equation}
\end{lemma}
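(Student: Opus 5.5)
The two-sided bound on $Q$ is immediate from \cref{thm:neighbourhood-moments}. Taking $r=e^{-t}$ with $t\geq0$, the case $q=1$ of \cref{eq:neighbourhood-moment-bound} gives $\bbE M_{e^{-t}}(E)\leq C_1e^{-(2-2s)t}$. The lower bound \cref{eq:neighbourhood-mean-lower-bound} gives $\bbE M_{e^{-t}}(E)\geq ce^{-(2-2s)t}$. Multiplying by $e^{(2-2s)t}$ yields $c\leq Q\leq C$. Continuity of $Q$ in $t$ follows from continuity of $r\mapsto M_r(F)$ together with dominated convergence, using the moment bound. So $Q$ is bounded and continuous on $[0,\infty)$.

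For the shift statement, I will derive an approximate renewal equation for $Q$ and then invoke \cref{eq:renewal-shift}. By definition of $D_r$, $M_r(E)=\sum_iM_r(\BM(S_iK))-D_r$. For each $i$, $\BM(S_iK)=\BM_{S_i(0)}+(\BM_{S_i(x)}-\BM_{S_i(0)})_{x\in K}$. By Brownian scaling and the Markov property, $(\BM_{S_i(x)}-\BM_{S_i(0)})_{x\in[0,1]}$ has the law of $(\pm r_i^{1/2}\BM'_x)$ for a standard Brownian motion $\BM'$; if $S_i$ reverses orientation, I use time reversal of the increments on $[S_i(1),S_i(0)]$ instead. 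Since $M_r$ is translation invariant and invariant under reflections, the scaling identity in \cref{eq:neighbourhood-union-scaling} with $q=r_i^{1/2}$ gives $\bbE M_r(\BM(S_iK))=r_i\,\bbE M_{r/r_i^{1/2}}(E)$. With $r=e^{-t}$ and $r_i=e^{-\alpha_i}$, this becomes $e^{(2-2s)t}r_i\bbE M_{e^{-(t-\alpha_i/2)}}(E)=r_i\,e^{(2-2s)\alpha_i/2}Q(t-\alpha_i/2)=r_i^{s}Q(t-\alpha_i/2)=p_iQ(t-\alpha_i/2)$. Therefore
\[
  Q(t)=\sum_ip_iQ(t-\alpha_i/2)-e^{(2-2s)t}\bbE D_{e^{-t}}.
\]
Here I must make sure that $t-\alpha_i/2\geq0$, or else extend $Q$ to negative times with the same definition; the bounds remain finite there by the moment estimate with $r\leq r_{\min}^{-1/2}$. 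By \cref{eq:neighbourhood-defect-mean}, the defect term is at most $Ce^{-2\eta t}$.

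Thus $Q$ is a bounded continuous function with $\abs{Q-\vartheta'*Q}\leq Ce^{-2\eta t}$, where $\vartheta'=\sum_ip_i\delta_{\alpha_i/2}$ is a finitely supported probability distribution on $(0,\infty)$. The consequence \cref{eq:renewal-shift} of the renewal theorem, applied with $\vartheta'$, gives $Q(t-u)-Q(t)\to0$ for each $u=\alpha_i/2$ in its support, and this is \cref{eq:neighbourhood-renewal-shift}. The main point requiring care is the scaling identity for each piece $\BM(S_iK)$: handling orientation-reversing maps, where the time reversal of Brownian increments is still a Brownian motion, and checking the exact exponent bookkeeping $r_i\cdot r_i^{-(1-s)}=p_i$. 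The overlap estimate itself is already supplied by \cref{thm:neighbourhood-overlap}.
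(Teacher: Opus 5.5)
Your proposal is correct and follows the paper's proof essentially step for step. It takes the two-sided bound from \cref{thm:neighbourhood-moments}, uses the scaling identity $\bbE M_r(\BM(S_iK))=r_i\bbE M_{r/\sqrt{r_i}}(E)$ to get $Q(t)=\sum_ip_iQ(t-\alpha_i/2)-d(t)$ with $d(t)\leq Ce^{-2\eta t}$ from \cref{thm:neighbourhood-overlap}, and then applies \cref{eq:renewal-shift} with interarrival law $\sum_ip_i\delta_{\alpha_i/2}$.

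The only difference is cosmetic. The paper does not extend $Q$ to negative times; it uses the renewal identity only for $t\geq\frac12\max_i\alpha_i$, which suffices because \cref{eq:renewal-shift} needs the defect bound only for large $t$. This also avoids your appeal to the moment bound at radii $r>1$, which \cref{thm:neighbourhood-moments} does not literally cover.
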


\begin{proof}
The bounds $c\leq Q(t)\leq C$ follow from \cref{thm:neighbourhood-moments}. Brownian scaling and
\cref{eq:neighbourhood-union-scaling} give, for every $r>0$,
\[
  \bbE M_r(\BM(S_i(K)))=r_i\bbE M_{r/\sqrt{r_i}}(E).
\]
Recall that $\alpha_i=\log(1/r_i)$ and put $d(t)=e^{(2-2s)t}\bbE D_{e^{-t}}$ for $t\geq0$.
The definition of $D_r$ gives, for $t\geq\frac12\max_i \alpha_i$,
\begin{equation}\label{eq:neighbourhood-renewal-equation}
  \begin{split}
    Q(t)
    &=e^{(2-2s)t}\sum_i\bbE M_{e^{-t}}(\BM(S_i(K)))-d(t)\\
    &=\sum_i r_i e^{(1-s)\alpha_i}Q(t-\alpha_i/2)-d(t)\\
    &=\sum_i p_i Q(t-\alpha_i/2)-d(t).
  \end{split}
\end{equation}
Moreover, \cref{thm:neighbourhood-overlap} gives
\begin{equation}\label{eq:neighbourhood-renewal-defect}
  0\leq d(t)\leq Ce^{(2-2s)t}e^{-(2-2s+2\eta)t}=Ce^{-2\eta t},
  \qquad t\geq0.
\end{equation}

Continuity of $M_r(E)$ in $r$ and the moment bound in \cref{thm:neighbourhood-moments}
give continuity of $Q$ by dominated convergence. We apply the renewal theorem in the form
\cref{eq:renewal-shift} to $Q$ with interarrival distribution $\sum_i p_i\delta_{\alpha_i/2}$,
using \cref{eq:neighbourhood-renewal-equation,eq:neighbourhood-renewal-defect}, to obtain
\cref{eq:neighbourhood-renewal-shift}.
\end{proof}

We next show that the normalised neighbourhood mass concentrates around its mean. For each
finite word $w$, Brownian scaling, using time reversal when $S_w$ reverses orientation, gives
\begin{equation}\label{eq:cylinder-neighbourhood-scaling}
  \bigl(M_r(\BM(S_wK))\bigr)_{r>0}
  \overset{\mathrm d}{=}
  \bigl(r_wM_{r/\sqrt{r_w}}(E)\bigr)_{r>0}.
\end{equation}
By translation invariance, $M_r(\BM(S_wK))$ depends only on the Brownian increments within
$I_w$. The interval partition from \cref{sec:osc-geometry} therefore divides the stopping words
into at most $M$ classes within which these masses are independent.

\begin{lemma}\label{thm:neighbourhood-concentration}
There exist constants $C,\gamma>0$ such that, for every $t\geq0$,
\[
  \Var\bigl(e^{(2-2s)t}M_{e^{-t}}(E)\bigr)\leq Ce^{-\gamma t}.
\]
Consequently, for every fixed $\tau\in\bbR$, we have
\begin{equation}\label{eq:neighbourhood-grid-concentration}
  e^{(2-2s)(n+\tau)}M_{e^{-(n+\tau)}}(E)-Q(n+\tau)\to0
  \qquad\text{almost surely as }n\to\infty.
\end{equation}
\end{lemma}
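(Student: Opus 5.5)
We decompose $E$ at an intermediate scale and exploit independence of Brownian increments over disjoint time intervals. Fix $t\geq0$, $r=e^{-t}$, and a parameter $\beta\in(0,1)$ to be chosen. We work with the stopping family $\cW(u)$ at $u=r^{2\beta}$, which is much coarser than $r^2$. Iterating the first identity in \cref{eq:neighbourhood-union-scaling} over $\cW(u)$ and repeating the pointwise argument behind \cref{eq:neighbourhood-defect-elementary} gives
\[
  M_r(E)=\sum_{w\in\cW(u)}M_r(\BM(S_wK))-D_r^{(u)},
  \qquad
  0\leq D_r^{(u)}\leq\sum_{\{w,v\}}\cL^2(A_w\cap A_v),
\]
where $A_w=\langle\BM(S_wK)\rangle_r$ as before.

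\textbf{Main term.} Write $X_w=M_r(\BM(S_wK))$. By \cref{eq:cylinder-neighbourhood-scaling} and \cref{thm:neighbourhood-moments}, rescaled to the cylinder of ratio $r_w\asymp u$, we have
\[
  \Var X_w\leq\bbE X_w^2\leq C r_w^2\,(r/\sqrt{r_w})^{2(2-2s)}\asymp u^{2s}r^{4-4s}.
\]
We split $\cW(u)$ into the at most $M$ classes of \cref{sec:osc-geometry}. Within one class the intervals $I_w$ have disjoint interiors, so the $X_w$ are independent. Hence the variance of the class sum is at most $\#\cW(u)\cdot Cu^{2s}r^{4-4s}\leq Cr^{4-4s}$, using $\#\cW(u)\leq Cu^{-s}$. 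Combining the at most $M$ classes by Cauchy--Schwarz costs only a factor $M^2$. After multiplying by $e^{2(2-2s)t}$, the main term has variance at most $Cu^{s}=Ce^{-2\beta st}$.

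\textbf{Defect term.} We bound the second moment of $e^{(2-2s)t}D_r^{(u)}$, and this is where I expect the main obstacle. First-moment bounds are easy: pairs of words at the same level with distinct first branching symbols can be treated by applying \cref{thm:neighbourhood-overlap} to the subsystem rooted at their common ancestor, with Brownian scaling. Summing over ancestors $x$ with $r_x\geq u$ gives
\[
  \bbE D^{(u)}_r\leq C\sum_{r_x\geq u}r_x\,(r/\sqrt{r_x})^{2-2s+2\eta}.
\]
After normalisation this is $C\sum_{r_x\geq u}p_x\,(r^2/r_x)^{\eta}$, which is at most $C\log(1/u)\,(r^2/u)^{\eta}=Ct\,e^{-2(1-\beta)\eta t}$. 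This decays, so Markov's inequality gives convergence in probability. For the variance we need a second moment. I would try bounding it through
\[
  \bbE\bigl[D^2\bigr]\leq\bigl(\bbE D\bigr)^{1/2}\bigl(\bbE D^3\bigr)^{1/2}
\]
and controlling $\bbE D^3\leq\bbE\,\cL^2(\langle E\rangle_r)^3$ by \cref{thm:neighbourhood-moments}. This gives a normalised bound $Ct^{1/2}e^{-(1-\beta)\eta t}$. Choosing $\beta=1/2$ then yields $\Var\leq Ce^{-\gamma t}$ for some $\gamma<\min\{s,\eta/2\}$, absorbing the polynomial factor in $t$.

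\textbf{Almost sure convergence.} For the consequence \cref{eq:neighbourhood-grid-concentration}, fix $\tau$ and apply Chebyshev's inequality at times $n+\tau$. The variance bound is summable in $n$, so Borel--Cantelli gives, for each $\eps>0$, that $\abs{e^{(2-2s)(n+\tau)}M_{e^{-(n+\tau)}}(E)-Q(n+\tau)}>\eps$ only finitely often almost surely. Since the mean of the normalised quantity is exactly $Q(n+\tau)$, intersecting over rational $\eps$ gives the claim.
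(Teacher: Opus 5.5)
\textbf{Overall.} Your architecture is the paper's. With $\beta=1/2$ you stop at $\cW(r)$ and treat the main term by independence within the $M$ classes. You bound the first moment of the defect by summing rescaled one-step defects over proper prefixes, interpolate against a higher moment, and finish with Chebyshev and Borel--Cantelli. Using Cauchy--Schwarz with the third moment instead of the paper's H\"older bound $\bbE D^2\le(\bbE D)^{2/3}(\bbE D^4)^{1/3}$ is harmless. It gives $\gamma<\min\{s,\eta/2\}$ rather than $\gamma<\min\{s,2\eta/3\}$.

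\textbf{The gap.} The step $\bbE D^3\le\bbE\,\cL^2(\langle E\rangle_r)^3$ is not justified. The pointwise bound it would come from, $D^{(u)}_r\le\cL^2(\langle E\rangle_r)$, is false, because the multiplicity of overlapping cylinder neighbourhoods is unbounded. Consider the positive-probability event $\BM([0,1])\subset B(0,r/4)$. For $x\in B(0,r/4)$, every cut-off $(1-\dist(x,\BM(S_wK))/r)_+$ is at least $1/2$. The integrand of $D^{(u)}_r$ is therefore at least $(\#\cW(u)-1)/2$ on that disc. This gives $D^{(u)}_r\ge(\#\cW(u)-1)\pi r^2/32$, whereas $\cL^2(\langle E\rangle_r)\le\pi(5r/4)^2$.

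\textbf{The fix.} Use only $0\le D^{(u)}_r\le\sum_{w\in\cW(u)}X_w$, which holds since $M_r(E)\ge0, and proceed as the paper does. By \cref{eq:cylinder-neighbourhood-scaling} and \cref{thm:neighbourhood-moments} with $q=3$, valid once $r/\sqrt{r_w}\le1$, you get $\norm{X_w}_{L^3}\le Cr^{2-2s}p_w$. Minkowski's inequality and $\sum_wp_w=1$ then give $\bbE D^3\le Cr^{3(2-2s)}$, which is the bound you need.

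\textbf{A smaller point on the first moment.} Route the first-moment estimate through the telescoping identity $D^{(u)}_r=\sum_xD_{x,r}$ over proper prefixes $x$ of words in $\cW(u)$, where $D_{x,r}=\sum_iM_r(\BM(S_xS_iK))-M_r(\BM(S_xK))$. Brownian scaling and \cref{eq:neighbourhood-defect-mean} then apply to each $D_{x,r}$ directly. Your crude bound $D^{(u)}_r\le\sum_{\{w,v\}}\cL^2(A_w\cap A_v)$ sums pairwise intersections over all descendant pairs of an ancestor. The statement of \cref{thm:neighbourhood-overlap} only controls the smaller area of the intersection of the two unions, so it does not bound that sum. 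You could recover it from the proof of that lemma, which bounds exactly such pair sums at scale $r^2$, but telescoping avoids this.

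Also keep the factor $u^s$ in the main term: the class variance is at most $Cu^sr^{4-4s}$, not merely $Cr^{4-4s}$.
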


\begin{proof}
Fix $0<\gamma<\min\{s,2\eta/3\}$ and put $r=e^{-t}$. For $r_{\min}\leq r\leq1$,
\cref{thm:neighbourhood-moments} gives
\[
  \Var\bigl(r^{2s-2}M_r(E)\bigr)\leq r^{4s-4}\bbE[M_r(E)^2]\leq C
  \leq Cr_{\min}^{-\gamma}e^{-\gamma t}.
\]
We may therefore, without loss of generality, assume $0<r<r_{\min}$.
For $w\in\cW(r)$, we have $r_w>r_{\min}r>r^2$, so $r/\sqrt{r_w}<1$.
By \cref{eq:cylinder-neighbourhood-scaling,thm:neighbourhood-moments}, for every $q\geq1$,
\[
  \bbE[M_r(\BM(S_wK))^q]
  \leq C_qr_w^q\left(\frac{r}{\sqrt{r_w}}\right)^{q(2-2s)}
  =C_qr^{q(2-2s)}p_w^q.
\]
As in \cref{sec:osc-geometry}, we partition $\cW(r)$ into classes
$\cW_1(r),\ldots,\cW_M(r)$ such that the time intervals $I_w$ have pairwise disjoint interiors
within each class.
By independence within each class, Cauchy--Schwarz, and $p_w\leq r^s$, we obtain
\begin{equation}\label{eq:stopping-neighbourhood-variance}
  \begin{split}
    r^{4s-4}\Var\left(\sum_{w\in\cW(r)}M_r(\BM(S_wK))\right)
    &\leq Mr^{4s-4}\sum_{j=1}^M
      \Var\left(\sum_{w\in\cW_j(r)}M_r(\BM(S_wK))\right)\\
    &=Mr^{4s-4}\sum_{w\in\cW(r)}\Var(M_r(\BM(S_wK)))\\
    &\leq C\sum_{w\in\cW(r)}p_w^2
     \leq Cr^s\sum_{w\in\cW(r)}p_w=Cr^s.
  \end{split}
\end{equation}

We next bound the total overlap defect. Let $\cT(r)$ be the set of proper prefixes of words
in $\cW(r)$, including the empty word. For $v\in\cT(r)$, put
\[
  D_{v,r}=\sum_i M_r(\BM(S_vS_iK))-M_r(\BM(S_vK))\geq0.
\]
The terms for internal cylinders cancel, giving
\[
  \sum_{v\in\cT(r)}D_{v,r}
  =\sum_{w\in\cW(r)}M_r(\BM(S_wK))-M_r(E).
\]
For $v\in\cT(r)$, we have $r_v>r$. Brownian scaling and
\cref{eq:neighbourhood-defect-mean} give
\begin{align*}
  r^{2s-2}\bbE D_{v,r}
  &=r^{2s-2}r_v\bbE D_{r/\sqrt{r_v}}
  \leq Cr^{2s-2}r_v\left(\frac{r}{\sqrt{r_v}}\right)^{2-2s+2\eta}
   =Cr^{2\eta}p_vr_v^{-\eta}.
\end{align*}
Since $r<r_v\leq(\max_i r_i)^{\abs{v}}$, the set $\cT(r)$ has at most
$C(1+\log(1/r))$ generations. In each generation,
\[
  \sum_{\substack{v\in\cT(r)\\\abs{v}=k}}p_v
  \leq\sum_{\abs{v}=k}p_v=1.
\]
Consequently,
\begin{equation}\label{eq:stopping-neighbourhood-defect}
  \begin{split}
    r^{2s-2}\sum_{v\in\cT(r)}\bbE D_{v,r}
    &\leq Cr^{2\eta}\sum_{v\in\cT(r)}p_vr_v^{-\eta}
     \leq Cr^\eta\sum_{v\in\cT(r)}p_v\\
    &\leq Cr^\eta(1+\log(1/r)).
  \end{split}
\end{equation}
The total defect is nonnegative and at most $\sum_{w\in\cW(r)}M_r(\BM(S_wK))$.
Since $\sum_{w\in\cW(r)}p_w=1$, Jensen's inequality and the fourth-moment bound give
\begin{align*}
  r^{8s-8}\bbE\left[\left(\sum_{v\in\cT(r)}D_{v,r}\right)^4\right]
  &\leq r^{8s-8}\sum_{w\in\cW(r)}p_w^{-3}\bbE[M_r(\BM(S_wK))^4]\\
  &\leq C\sum_{w\in\cW(r)}p_w=C.
\end{align*}
H\"older's inequality and \cref{eq:stopping-neighbourhood-defect} now give
\begin{align*}
  r^{4s-4}\bbE\left[\left(\sum_{v\in\cT(r)}D_{v,r}\right)^2\right]
  &\leq r^{4s-4}\left(\sum_{v\in\cT(r)}\bbE D_{v,r}\right)^{2/3}
    \left(\bbE\left[\left(\sum_{v\in\cT(r)}D_{v,r}\right)^4\right]\right)^{1/3}\\
  &\leq Cr^{2\eta/3}(1+\log(1/r))^{2/3}.
\end{align*}
Subtracting the total defect from the stopping sum gives $M_r(E)$, so
\cref{eq:stopping-neighbourhood-variance} and the last bound yield
\begin{align*}
  \Var\bigl(r^{2s-2}M_r(E)\bigr)
  &\leq2r^{4s-4}\left(
    \Var\left(\sum_{w\in\cW(r)}M_r(\BM(S_wK))\right)
    +\bbE\left[\left(\sum_{v\in\cT(r)}D_{v,r}\right)^2\right]\right)\\
  &\leq Cr^s+Cr^{2\eta/3}(1+\log(1/r))^{2/3}\\
  &=Ce^{-st}+C(1+t)^{2/3}e^{-2\eta t/3}
   \leq Ce^{-\gamma t}.
\end{align*}

Chebyshev's inequality and the Borel--Cantelli lemma, as in the proof of
\cref{thm:uniform-concentration}, give \cref{eq:neighbourhood-grid-concentration} for each fixed
$\tau$ almost surely.
\end{proof}

\subsection{Reconstruction}

The renewal and concentration estimates in
\cref{thm:neighbourhood-renewal,thm:neighbourhood-concentration} now recover the self-similar
cylinder weights from the image set.

\begin{proof}[Proof of \cref{thm:minkowski-reconstruction}]
For each finite word $w$, put $\alpha_w=\log(1/r_w)$. By the equality in law of processes in
\cref{eq:cylinder-neighbourhood-scaling}, applying the almost-sure convergence
\cref{eq:neighbourhood-grid-concentration} with the fixed offset $-\alpha_w/2$ and taking a
countable intersection gives, simultaneously for every word $w$,
\[
  e^{(2-2s)n}M_{e^{-n}}(\BM(S_wK))-p_w Q(n-\alpha_w/2)\to0
  \qquad\text{almost surely as }n\to\infty.
\]

Since $\alpha_w$ is the sum of the logarithmic contractions along $w$, repeated application
of the mean shift relation \cref{eq:neighbourhood-renewal-shift} gives
$Q(n-\alpha_w/2)-Q(n)\to0$. Combining this with \cref{eq:neighbourhood-grid-concentration} at
$\tau=0$ and the lower bound in \cref{thm:neighbourhood-renewal}, we obtain, simultaneously
for every word $w$,
\begin{equation}\label{eq:cylinder-neighbourhood-ratio}
  \frac{M_{e^{-n}}(\BM(S_w(K)))}{M_{e^{-n}}(E)}
  =\frac{p_w Q(n-\alpha_w/2)+o(1)}{Q(n)+o(1)}\to p_w
  \qquad\text{almost surely as }n\to\infty.
\end{equation}

Fix an integer $k\geq1$. Since $E=\bigcup_{\abs{w}=k}\BM(S_w(K))$ and
$\sum_{\abs{w}=k}p_w=1$, \cref{eq:cylinder-neighbourhood-ratio} gives
\begin{equation}\label{eq:level-neighbourhood-defect}
  \begin{split}
    \frac{\sum_{\abs{w}=k}M_{e^{-n}}(\BM(S_w(K)))-M_{e^{-n}}(E)}{M_{e^{-n}}(E)}
    &=\sum_{\abs{w}=k}\frac{M_{e^{-n}}(\BM(S_w(K)))}{M_{e^{-n}}(E)}-1\\
    &\to0
    \qquad\text{as }n\to\infty.
  \end{split}
\end{equation}

Recall that $\nu_{E,e^{-n}}$ is the cut-off probability measure defined in
\cref{eq:neighbourhood-measure-definition}. Let $f\colon\bbR^2\to\bbR$ be bounded and Lipschitz.
For every word $w$ of length $k$, choose $t_w\in S_w(K)$ and put
$x_w\coloneqq\BM_{t_w}$. We claim that
\[
  \limsup_{n\to\infty}
  \abs{\int f\dd\nu_{E,e^{-n}}-\sum_{\abs{w}=k}p_wf(x_w)}
  \leq
  \Lip(f)\max_{\abs{w}=k}\diam(\BM(S_w(K))).
\]

To prove the claim, we first replace the maximum of the cylinder cut-offs by their sum. The
resulting
error is at most $\norm{f}_\infty$ times the ratio in
\cref{eq:level-neighbourhood-defect}. Moreover, if
$x\in\langle\BM(S_w(K))\rangle_{e^{-n}}$, then
\[
  \abs{f(x)-f(x_w)}
  \leq\Lip(f)(\diam(\BM(S_w(K)))+e^{-n}).
\]
Integrating against the cylinder cut-offs and dividing by $M_{e^{-n}}(E)$ bounds the error from
replacing $f(x)$ by $f(x_w)$ on each cylinder by
\[
  \Lip(f)
  \sum_{\abs{w}=k}(\diam(\BM(S_w(K)))+e^{-n})
  \frac{M_{e^{-n}}(\BM(S_w(K)))}{M_{e^{-n}}(E)}.
\]
Finally, \cref{eq:cylinder-neighbourhood-ratio} shows that the additional error incurred by
replacing the remaining cylinder ratios by $p_w$ tends to zero.

As $k\to\infty$, the diameters of the sets $S_w(K)$ tend to zero uniformly over words of length
$k$. The uniform continuity of the Brownian path therefore gives
\[
  \max_{\abs{w}=k}\diam(\BM(S_w(K)))\to0.
\]
Iterating self-similarity gives $\mu=\sum_{\abs{w}=k}p_w(S_w)_*\mu$. The same
uniform-continuity estimate therefore gives
\[
  \sum_{\abs{w}=k}p_wf(x_w)\to\int f\dd(\BM_*\mu).
\]
Letting first $n\to\infty$ and then $k\to\infty$ proves the weak convergence
\cref{eq:minkowski-reconstruction}.

Finally, $F\mapsto\nu_{F,r}$ is continuous for each $r>0$ by dominated convergence. Since
$\cP(\bbR^2)$ is Polish, the limit $\lim_{n\to\infty}\nu_{F,e^{-n}}$ is Borel measurable on
its Borel set of convergence. The almost-sure convergence above therefore proves the Borel
assertion.
\end{proof}

\section{Examples and limitations of profile separation}\label{sec:examples}

In this section we give two explicit examples of self-similar sets to illustrate the usefulness of
profile separation. The first example in \cref{sec:base-five} gives two different missing digit
sets, where distinct structure gives rise to distinct profiles. 
In \cref{sec:obstruction}, we give an example of non-isometric sets which have identical
correlation integrals, meaning that these Cantor sets and measures cannot be distinguished by the
profiles.

\subsection{Two base-five missing-digit sets with different structure}\label{sec:base-five}

Set $D_1=\{0,1,4\}$ and $D_2=\{0,2,4\}$. For $j\in\{1,2\}$, let $K_j$ be the attractor of
\[
  S_d(x)=\frac{x+d}{5},\qquad d\in D_j,
\]
and let $\mu_j$ be the self-similar probability measure assigning weight $1/3$ to each map.
Both IFSs satisfy the open set condition with $U=(0,1)$, and both measures are
$s$-Ahlfors regular for $s=\log3/\log5$~\cite{Falconer1997}. The pieces
$S_0(K_1)$ and $S_1(K_1)$ meet at $1/5$, whereas the first-level pieces of $K_2$ are
separated by gaps of length $1/5$.

Set $\ell=\log5$ and write
\(
G_j(t)=e^{st}\cC_{e^{-t}}(\mu_j)\text{ for } j\in\{1,2\}.
\)

\begin{proposition}\label{thm:base-five-profiles}
For every integer $n\geq1$,
\begin{equation}\label{eq:base-five-correlation}
  \cC_{5^{-n}}(\mu_1)=\frac32\,3^{-n}-\frac12\,9^{-n},
  \qquad \cC_{5^{-n}}(\mu_2)=3^{-n}.
\end{equation}
In particular, the expected Brownian correlation profiles satisfy
\[
  \limsup_{t\to\infty}\abs{H_{\mu_1}^s(t)-H_{\mu_2}^s(t)}>0.
\]
\end{proposition}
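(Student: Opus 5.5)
The plan is to compute $\cC_{5^{-n}}(\mu_j)$ exactly with the recursion \cref{eq:correlation-recursion}, reading off the cross terms $\cC^\times_\delta(\mu_j)$ at $\delta=5^{-n}$ directly from the geometry of the first-level pieces. I would then pass to the Brownian profiles using the lattice asymptotics of \cref{thm:renewal-limits} and the injectivity of \cref{thm:smoothing-injective}. Both systems have $r_i=1/5$ and $p_i=1/3$, so $\sum_i r_i^{2s}\cC_{5\delta}=\frac13\cC_{5\delta}$, and $\cC_1(\mu_j)=1$.

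\textbf{Computing $\cC_{5^{-n}}(\mu_2)$.} The pieces $S_0K_2\subseteq[0,1/5]$, $S_2K_2\subseteq[2/5,3/5]$ and $S_4K_2\subseteq[4/5,1]$ are at mutual distance at least $1/5$. Distances are strict, and atomlessness means ties at distance exactly $1/5$ carry no mass. Hence $\cC^\times_{5^{-n}}(\mu_2)=0$ for $n\geq1$, so $\cC_{5^{-n}}=\frac13\cC_{5^{-n+1}}$, which gives $3^{-n}$.

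\textbf{Computing $\cC_{5^{-n}}(\mu_1)$.} Among the pieces of $K_1$:
\begin{itemize}
\item the pairs $(0,4)$ and $(1,4)$ are separated by gaps of at least $2/5$, so they contribute nothing for $n\geq1$;
\item the only contributing pair is $\{0,1\}$, touching at $1/5=S_0(1)=S_1(0)$.
\end{itemize}
For $X,Y$ independent with law $\mu_1$, we have $\abs{S_0X-S_1Y}=\bigl((1-X)+Y\bigr)/5$. Put $c_k=\bbP\bigl((1-X)+Y<5^{-k}\bigr)$, so that, counting both orders,
\[
  \cC^\times_{5^{-n}}(\mu_1)=\tfrac29\,c_{n-1}.
\]
For $k\geq1$, the event $(1-X)+Y<5^{-k}$ forces $X\in S_4K_1$ and $Y\in S_0K_1$, since any other piece keeps $1-X$ or $Y$ at least $1/5$. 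Rescaling by $S_4$ and $S_0$ gives $c_k=\frac19c_{k-1}$.

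The base value follows from $(1-X)+Y<1\iff Y<X$, which has probability $1/2$ by symmetry and atomlessness. So $c_k=\frac12 9^{-k}$ and
\[
  \cC_{5^{-n}}(\mu_1)=\tfrac13\cC_{5^{-n+1}}(\mu_1)+\tfrac19\,9^{-(n-1)}.
\]
Solving with $\cC_1=1$ gives $\frac32 3^{-n}-\frac12 9^{-n}$; a check at $n=1$ gives $4/9$ from both sides.

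\textbf{Passing to the Brownian profiles.} Both systems are lattice with span $\ell=\log5$. Since $G_1(n\ell)=\frac32-\frac12 3^{-n}\to\frac32$ and $G_2(n\ell)=1$, \cref{thm:renewal-limits} gives $\widetilde G_1(0)=\frac32\neq1=\widetilde G_2(0)$. Thus $g=\widetilde G_1-\widetilde G_2$ is a non-zero continuous $\ell$-periodic function, and $Tg\neq0$ by \cref{thm:smoothing-injective}.

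By the proof of \cref{thm:profile-asymptotics}, $H_{\mu_1}^s-H_{\mu_2}^s-Tg\to0$. Since $Tg$ is periodic and not identically zero, the $\limsup$ in the statement is positive.

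The main obstacle is the cross-term bookkeeping for $\mu_1$. One must justify that at scales $5^{-n}$ only the touching pair $\{0,1\}$ contributes, handle boundary ties via atomlessness, and obtain the clean base value $c_0=1/2$.
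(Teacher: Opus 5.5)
Your proposal is correct and follows essentially the paper's argument. The paper counts the $(3^n-1)/2$ touching pairs of level-$n$ cylinders directly, each contributing $9^{-n}$ by exchangeability, while you obtain the same sum by unrolling the first-level recursion \cref{eq:correlation-recursion} together with the boundary recursion $c_k=\frac19c_{k-1}$; the passage to the Brownian profiles via $\widetilde G_1(0)=\frac32\neq1=\widetilde G_2(0)$ and the injectivity of $T$ is exactly as in the paper.
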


\begin{proof}
For $n\geq1$, each level-$n$ cylinder interval has length
$5^{-n}$ and mass $3^{-n}$. Pairs in the same cylinder contribute $3^{-n}$ to the
correlation integral. Distinct cylinder intervals either touch or are separated by at least
$5^{-n}$, with equality contributing zero since the measures are atomless. For two touching
intervals, the distance is less than $5^{-n}$ precisely when the rescaled right point is smaller
than the rescaled left point. By exchangeability, each order therefore contributes
$9^{-n}/2$.

For $K_1$, touching pairs have words with a common prefix of length $k$, followed by $0$ and
$1$, and then respectively $n-k-1$ copies of $4$ and $0$. There are consequently
$\sum_{k=0}^{n-1}3^k=(3^n-1)/2$ such pairs, whereas $K_2$ has none. Adding these
contributions proves \cref{eq:base-five-correlation}.

Both IFSs are of lattice type with span $\ell$, so the renewal limits in \cref{thm:renewal-limits}
give continuous $\ell$-periodic functions $\widetilde G_j$ with
$G_j(t)-\widetilde G_j(t)\to0$ as $t\to\infty$, and
\cref{eq:base-five-correlation} gives
\[
  \widetilde G_1(0)=\lim_{n\to\infty}G_1(n\ell)=\frac32,
  \qquad
  \widetilde G_2(0)=\lim_{n\to\infty}G_2(n\ell)=1.
\]
The expected profiles converge to their smoothed periodic limits, as in the proof of
\cref{thm:profile-asymptotics}. These limits are distinct by the injectivity of smoothing in
\cref{thm:smoothing-injective}, giving the separation.
\end{proof}

Applying \cref{thm:main} and then the Borel reconstruction map from
\cref{thm:minkowski-reconstruction}, we obtain
\[
  \Law(\BM_*\mu_1)\perp\Law(\BM_*\mu_2),
  \qquad \Law(\BM(K_1))\perp\Law(\BM(K_2)).
\]
Thus one compact planar Brownian image almost surely distinguishes these two missing-digit sets.

\subsection{Identical correlation integrals}\label{sec:obstruction}

The expected profile does not determine the original measure. For a probability measure $\mu$ on
$[0,1]$, let $\widetilde\mu$ be its pushforward under $x\mapsto-x$. If $X$ and $Y$ are independent
with law $\mu$, then $\mu*\widetilde\mu$ is the law of $X-Y$ and is called the
\emph{autocorrelation measure} of $\mu$. Two probability measures on $[0,1]$ are
\emph{homometric} if their autocorrelation measures agree.
Rosenblatt--Seymour~\cite{RosenblattSeymour1982} studied the structure of homometric sets and finite
measures. Since $\cC_r(\mu)=(\mu*\widetilde\mu)((-r,r))$ for every $r>0$, homometric measures
have identical correlation integrals and, by \cref{thm:gaussian-reduction}, the same expected
Brownian correlation integral at every radius.

We construct such a pair at dimension $s=\log6/\log30\in(1/2,1)$.
We take the digit sets
\[
  D_A\coloneqq\set{0,1,4,10,12,17}
  \quad\text{and}\quad
  D_B\coloneqq\set{0,1,8,11,13,17}.
\]
Let $\mu_{D_A}$ and $\mu_{D_B}$ be the equal-weight self-similar probability measures on $[0,1]$
for the respective digit sets, with maps
\[
  S_d(x)\coloneqq\frac{x}{30}+\frac d{18}.
\]

\begin{proposition}\label{thm:homometric-example}
The measures $\mu_{D_A}$ and $\mu_{D_B}$ are distinct and $s$-Ahlfors regular. Their defining
IFSs are strongly separated, their supports are not isometric, and
\[
  \mu_{D_A}*\widetilde{\mu_{D_A}}=\mu_{D_B}*\widetilde{\mu_{D_B}}.
\]
\end{proposition}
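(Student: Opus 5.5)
The plan has two parts. Equality of the autocorrelation measures comes from a digit-level identity: $D_A$ and $D_B$ are homometric finite sets, and this lifts through the infinite-convolution structure of the self-similar measures. Non-isometry of the supports comes from an isometry-invariant feature of the gap structure.

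\emph{Separation, regularity and the digit identity.} First, write $L$ for the right endpoint of the attractor. It is the fixed point of $S_{17}$, so $L=\frac{17}{18}\cdot\frac{30}{29}<1$, and the left endpoint is $0$, the fixed point of $S_0$; thus both attractors have convex hull $[0,L]$. Each first-level hull image is $J_d=S_d([0,L])=[d/18,\,d/18+L/30]$. Consecutive digits in each set differ by at least $1$ and $1/18>L/30$, so the $J_d$ are pairwise disjoint. This gives strong separation and hence the open set condition. The similarity dimension solves $6\cdot30^{-s}=1$, i.e.\ $s=\log6/\log30$, and $s$-Ahlfors regularity of the natural (equal-weight) measures follows as in \cite{Falconer1997}.

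For the key identity, I will list the positive differences of each digit set. For $D_A$ they are $1,4,10,12,17,3,9,11,16,6,8,13,2,7,5$, and for $D_B$ they are $1,8,11,13,17,7,10,12,16,3,5,9,2,6,4$. In both cases these are $15$ distinct values forming the same set $\{1,\dots,13,16,17\}$. Hence the law of $d-d'$, for $d,d'$ independent and uniform on $D$, is the same for $D=D_A$ and $D=D_B$.

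\emph{Lifting to the self-similar measures.} A $\mu_D$-distributed point is $X=\sum_{k\geq0}30^{-k}d_k/18$ with i.i.d.\ digits $d_k$ uniform on $D$. For an independent copy $Y$ with digits $d'_k$, we get $X-Y=\sum_k 30^{-k}(d_k-d'_k)/18$. This is an almost surely convergent series of independent terms whose laws depend only on the common difference law above. Therefore $\mu_{D_A}*\widetilde{\mu_{D_A}}=\mu_{D_B}*\widetilde{\mu_{D_B}}$, which one can confirm, for instance, by comparing characteristic functions as infinite products.

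\emph{Distinctness and non-isometry.} This is the step I expect to need the most care. My first attempt is to use complementary gaps of $K$, i.e.\ bounded components of $[0,L]\setminus K$, since their lengths are invariant under isometries of $\bbR$. Gaps lying inside a block $J_d$ have length at most $L/30$. The gap between blocks with consecutive digits $d<d^+$ has length $(d^+-d)/18-L/30$, and this exceeds $L/30$ exactly when $d^+-d\geq2$, using $2/18-L/30>L/30$. Hence the multiset of gaps longer than $L/30$ is determined by the consecutive digit spacings of at least $2$. These are $\{3,6,2,5\}$ for $D_A$ and $\{7,3,2,4\}$ for $D_B$. The multisets differ, so the supports are not isometric. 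This also rules out equality of the supports, and since a measure determines its support, $\mu_{D_A}\neq\mu_{D_B}$.
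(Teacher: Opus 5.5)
Your proposal is correct. The strong-separation check, the difference-set computation and the lifting through the random digit expansion are essentially those of the paper; you verify separation explicitly, where the paper simply asserts it. For non-isometry you take a different route.

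The paper's route is as follows.
\begin{enumerate}[(i)]
\item Both supports have convex hull $[0,85/87]$, so any isometry between them is the identity or the reflection $x\mapsto 85/87-x$.
\item Disjointness of the first-level intervals means an attractor determines its digit set.
\item This reduces everything to the finite check that neither $D_A$ nor $17-D_A=\{0,5,7,13,16,17\}$ equals $D_B$.
\end{enumerate}

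You instead use an isometry invariant directly: the multiset of complementary gap lengths exceeding $L/30$. You read it off correctly from the consecutive digit spacings that are at least $2$, namely $\{2,3,5,6\}$ versus $\{2,3,4,7\}$.

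Your version avoids classifying the isometries. It also avoids the step, only sketched in the paper, of recovering the digit set from the attractor, and it does not need the two convex hulls to agree. The paper's version is a shorter finite check once that recovery step is granted.
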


\begin{proof}
For every $k\in\bbZ$, direct calculation gives
\[
  \begin{aligned}
    \#\set{(d,e)\in D_A\times D_A\colon d-e=k}
    &=\#\set{(d,e)\in D_B\times D_B\colon d-e=k}\\
    &=\begin{cases}
      6,&k=0,\\
      1,&\abs k\in\set{1,\ldots,13,16,17},\\
      0,&\text{otherwise}.
    \end{cases}
  \end{aligned}
\]

Both supports have convex hull $[0,85/87]$, so any isometry between them is either the identity
or the reflection $x\mapsto85/87-x$. Reflection replaces the digit set $D_A$ by $17-D_A$.
Since the first-level intervals are pairwise disjoint, equality of attractors forces equality
of their digit sets. Neither $D_A$ nor $17-D_A$ equals $D_B$, proving non-isometry and hence
$\mu_{D_A}\neq\mu_{D_B}$.

Writing the self-similar measures as random digit expansions, their autocorrelations are the
laws of
\[
  \frac1{18}\sum_{k=0}^\infty\frac{d_k-e_k}{30^k},
\]
where all digits $d_k,e_k$ are independent and uniform on the respective digit set. The signed
difference multisets agree, so the two series have the same law.
\end{proof}

By \cref{thm:homometric-example,eq:expected-profile}, $H_{\mu_{D_A}}^s=H_{\mu_{D_B}}^s$, so the
two-point criterion of \cref{thm:main} cannot separate $\Law(\BM_*\mu_{D_A})$ from
$\Law(\BM_*\mu_{D_B})$. 

\section{A two-contraction family of fixed dimension}\label{sec:fixed-dimension}

\subsection{Correlation profiles}\label{sec:fixed-dimension-family}

We use the two-contraction family introduced before \cref{thm:fixed-dimension-separation}.
Thus $s\in(0,1)$ is fixed, $p\in(0,1)$, $q=1-p$, and the contractions are
$\lambda_1=p^{1/s}$ and $\lambda_2=q^{1/s}$.

The cross term of \cref{sec:renewal-equation} is
\[
  \cC^\times_\delta(\mu_p)
  =2pq\,(\mu_p\times\mu_p)\set{(x,y)\colon S_2(y)-S_1(x)\leq\delta}.
\]
Here $1-\lambda_1-\lambda_2\leq S_2(y)-S_1(x)\leq1$ for $x,y\in K_p$, and
$\cC^\times_1(\mu_p)=2pq$. For the entropy of $\mu_p$, we write $h(p)=-p\log p-q\log q$.
Reflection in $1/2$ interchanges $p$ and $1-p$, preserving distances, so
\[
  \cC^\times_\delta(\mu_p)=\cC^\times_\delta(\mu_{1-p}),
  \qquad\overline H_s(p)=\overline H_s(1-p).
\]
We therefore restrict to $(0,1/2]$ where convenient.

\begin{proposition}\label{thm:fixed-dimension-profiles}
For every $p\in(0,1)$ and $r>0$,
\begin{equation}\label{eq:fixed-dimension-correlation}
    \bbE\cC_r(\BM_*\mu_p)
    =\sum_{j,k\geq0}\binom{j+k}{j}p^{2j}q^{2k}
    \cdot\int_0^1\left(1-\exp\left(-\frac{r^2}{2\lambda_1^j\lambda_2^k\delta}\right)\right)
      \dd\cC^\times_\delta(\mu_p).
\end{equation}
The mean profile exists and satisfies
\begin{equation}\label{eq:fixed-dimension-mean}
  \overline H_s(p)=\frac{2^{-s}\Gamma(1-s)}{h(p)}
    \int_0^1\delta^{-s}\dd\cC^\times_\delta(\mu_p).
\end{equation}
If $\log\lambda_1/\log\lambda_2\notin\bbQ$, then $H_{\mu_p}^s(t)\to\overline H_s(p)$ as
$t\to\infty$.
\end{proposition}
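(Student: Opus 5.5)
We will iterate the correlation recursion \cref{eq:correlation-recursion} and express everything in terms of the cross term. Since $\cC^\times_\delta(\mu_p)$ and $\cC_\delta(\mu_p)$ are continuous distribution functions, \cref{eq:correlation-recursion} says that the law of $\abs{X-Y}$ (for $X,Y$ independent with law $\mu_p$) is the mixture
\[
  \dd\cC_\delta=p^2\,\dd\cC_{\delta/\lambda_1}+q^2\,\dd\cC_{\delta/\lambda_2}+\dd\cC^\times_\delta ,
\]
using $r_1^{2s}=p^2$ and $r_2^{2s}=q^2$. Iterating $n$ times expresses $\dd\cC_\delta$ as a sum over words in $\{1,2\}$ of length below $n$ of rescaled cross measures, plus a remainder of total mass $(p^2+q^2)^n\to0$. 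Grouping words by the numbers $j$ and $k$ of ones and twos gives the multiplicity $\binom{j+k}{j}$, the weight $p^{2j}q^{2k}$ and the scale $\lambda_1^j\lambda_2^k$. Thus
\[
  \dd\cC_\delta=\sum_{j,k}\binom{j+k}{j}p^{2j}q^{2k}\,(\text{pushforward of }\dd\cC^\times \text{ by }\delta\mapsto\lambda_1^j\lambda_2^k\delta),
\]
with total mass $\sum_{n}(p^2+q^2)^n\cdot 2pq=1$. Substituting this into the second line of \cref{eq:gaussian-reduction} gives \cref{eq:fixed-dimension-correlation}. Exchanging sum and integral is justified by monotone convergence, since the integrand is nonnegative. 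The cross measure is supported in $[1-\lambda_1-\lambda_2,1]$, so the integral over $[0,1]$ captures all of it.

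For the mean, we combine the renewal structure with the Gaussian transform. By \cref{eq:h-definition}, $H=\phi\ast G$ in logarithmic coordinates, so Cesàro averages commute with the smoothing. By dominated convergence and boundedness of $G$,
\[
  \overline H_s(p)=\Bigl(\int_0^\infty\phi\Bigr)\overline G ,
\]
provided the Cesàro mean $\overline G=\lim_R R^{-1}\int_0^R G$ exists. This requires care with the factor $2t$: the average of $G(2t-\log\eta)$ over $t\in[0,R]$ equals the average of $G$ over $[-\log\eta,2R-\log\eta]$, up to $O(1/R)$.

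The existence of $\overline G=\frac1m\int_\bbR z$ follows from the renewal theorem stated in \cref{sec:renewal-equation}. In the non-lattice case $G$ converges to this value. In the lattice case the mean of $\widetilde G$ over a period is $\frac{\ell}{m}\cdot\frac1\ell\int_\bbR z$, so the Cesàro mean agrees. Here $m=p\alpha_1+q\alpha_2=h(p)/s$.

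It remains to evaluate the two factors. For the kernel, \cref{eq:gamma-multiplier} with $k=0$ gives $\int\phi=2^{-s}\Gamma(1-s)$. For the defect, substituting $\delta=e^{-t}$ gives
\[
  \int_\bbR z=\int_0^\infty\delta^{s-1}\cC^\times_\delta\,\dd\delta .
\]
Integrating by parts over $[0,1]$ and adding the constant tail $2pq\int_1^\infty\delta^{s-1}\dd\delta$ would diverge. This is a problem, because $z(t)=e^{st}\cdot 2pq$ for $t\le0$ is integrable, so the substitution must be redone with the measure $\dd t=-\dd\delta/\delta$. Doing so gives
\[
  \int_0^\infty\delta^{-s-1}\cC^\times_\delta\,\dd\delta .
\]
Integrating by parts, with $\cC^\times_\delta=0$ for small $\delta$ and constant $2pq$ beyond $1$, the boundary terms cancel against the tail and leave
\[
  \frac1s\int_0^1\delta^{-s}\,\dd\cC^\times_\delta .
\]
Multiplying by $1/m=s/h(p)$ yields \cref{eq:fixed-dimension-mean}.

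Finally, if $\log\lambda_1/\log\lambda_2\notin\bbQ$, the IFS is non-lattice, and \cref{thm:profile-asymptotics} (more precisely, its proof via \cref{thm:renewal-limits}) gives $H\to\overline H_s(p)$.

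The main obstacle is bookkeeping of constants in the integration by parts and the Cesàro/smoothing interchange; the rest follows from earlier results.
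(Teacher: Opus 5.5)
Your proposal is correct and follows the paper's proof essentially step by step. You iterate the correlation recursion and insert it into the Gaussian reduction, obtain the Ces\`aro mean $\frac1m\int_{\bbR}z$ of $G$ from the renewal limits in both the lattice and non-lattice cases, and multiply by the zero-frequency multiplier $\int_0^\infty\phi=2^{-s}\Gamma(1-s)$ with $m=h(p)/s$. The only difference is cosmetic: the paper evaluates $\int_{\bbR}z$ by Fubini, using $\int_{-\infty}^{-\log\delta}e^{st}\dd t=\delta^{-s}/s$, where you substitute and integrate by parts. Your route is equally valid once you delete the erroneous intermediate expression $\int_0^\infty\delta^{s-1}\cC^\times_\delta\dd\delta$.
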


\begin{proof}
Iterating the correlation recursion \cref{eq:correlation-recursion} gives
\[
  \cC_\delta(\mu_p)
  =\sum_{j,k\geq0}\binom{j+k}{j}p^{2j}q^{2k}
    \cC^\times_{\delta/(\lambda_1^j\lambda_2^k)}(\mu_p).
\]
The Gaussian reduction in \cref{eq:gaussian-reduction} and Fubini's theorem now give
\cref{eq:fixed-dimension-correlation}.

For the renewal equation in \cref{sec:renewal-equation}, the mean interarrival time is
$m=h(p)/s$ and the inhomogeneous term is $z(t)=e^{st}\cC^\times_{e^{-t}}(\mu_p)$.
By Fubini's theorem,
\[
  \int_\bbR z(t)\dd t
  =\int_0^1\int_{-\infty}^{-\log\delta}e^{st}\dd t\,\dd\cC^\times_\delta(\mu_p)
  =\frac1s\int_0^1\delta^{-s}\dd\cC^\times_\delta(\mu_p).
\]
The renewal limits in \cref{thm:renewal-limits} and the lattice periodisation in
\cref{eq:lattice-periodisation} show that $G$ has Ces\`aro mean $m^{-1}\int_\bbR z$ in both
cases. Averaging the smoothing formula \cref{eq:h-definition} and applying dominated
convergence multiplies this mean by
the zero-frequency multiplier in
\cref{eq:gamma-multiplier}, that is $\int_0^\infty\phi(\eta)\dd\eta=2^{-s}\Gamma(1-s)$.
Substitution gives \cref{eq:fixed-dimension-mean}, and ordinary
convergence in the non-lattice case follows from the profile asymptotics in
\cref{thm:profile-asymptotics}.
\end{proof}

\subsection{Separation of image laws}\label{sec:fixed-dimension-separation}

We first show that, for fixed $s\in(0,1)$, the mean $\overline H_s$ extends holomorphically
to a complex neighbourhood of $(0,1)$ and tends to zero as $p\downarrow0$.
\Cref{thm:fixed-dimension-separation} then draws the consequences for its critical points and level sets.

We replace $p$ by a complex number $z\in\bbC\setminus((-\infty,0]\cup[1,\infty))$ and define
$\lambda_1(z)=z^{1/s}$ and $\lambda_2(z)=(1-z)^{1/s}$ using the principal branches. We put
\[
  S_{1,z}(x)=\lambda_1(z)x,\qquad S_{2,z}(x)=\lambda_2(z)x+1-\lambda_2(z).
\]
For a finite word $w=w_1\cdots w_n\in\{1,2\}^n$, let
\[
  x_w(z)=S_{w_1,z}\circ\cdots\circ S_{w_n,z}(0),
  \qquad
  P_w(z)=z^{N_1(w)}(1-z)^{N_2(w)},
\]
where $N_i(w)$ counts the occurrences of $i$ in $w$, with $x_\varnothing=0$ and
$P_\varnothing=1$. If $\abs{\lambda_1},\abs{\lambda_2}\leq\rho<1$, then $\abs{S_{i,z}(0)}\leq2$, and
successive compositions give
\begin{equation}\label{eq:coded-point-bounds}
  \abs{x_{wi}(z)-x_w(z)}\leq2\rho^{n}
  \qquad\text{and}\qquad
  \abs{x_w(z)}\leq\frac{2}{1-\rho}.
\end{equation}
Since $P_1+P_2=1$, the weights of the words of length $n$ sum to $1$, while
$\sum_{w\in\{1,2\}^n}\abs{P_w(z)}=(\abs z+\abs{1-z})^n$. 

For an open disc $U\subset\bbC$, we write $B=\sup_{z\in U}(\abs z+\abs{1-z})$. For a kernel
$\psi\colon U\times D\times D\to\bbC$, with $D\subset\bbC$, we write
\[
  \Lambda^{(m)}(z)=\sum_{u,v\in\{1,2\}^m}P_u(z)P_v(z)\,\psi\bigl(z;x_u(z),x_v(z)\bigr),
  \qquad m\geq0,
\]
for its \emph{level sums}. At a real parameter $p\in(0,1)$, expectations below are taken over
independent $X,Y$ with law $\mu_p$.

Analytic dependence of stationary integrals is studied by
Cipriano--Pollicott~\cite[Theorem~1.2]{CiprianoPollicott2018}. Here we also need the uniform
bound below.

\begin{lemma}\label{thm:level-sums}
Let $U\subset\bbC$ be an open disc on which $\lambda_1$ and $\lambda_2$ are holomorphic and
bounded in modulus by some $\rho<1$, with $\rho B^2<1$. Let $D\subset\bbC$ be a bounded open convex set
containing $x_w(z)$ for every $z\in U$ and every finite word $w$, and let $\psi$ be holomorphic
on $U\times D\times D$ with $\abs{\partial_x\psi},\abs{\partial_y\psi}\leq L$.
Then the level sums converge uniformly on $U$ to a holomorphic function $\Lambda$ with
\begin{equation}\label{eq:level-sum-bound}
  \sup_{z\in U}\abs{\Lambda(z)-\psi(z;0,0)}\leq\frac{4LB^2}{1-\rho B^2},
\end{equation}
and $\Lambda(p)=\bbE\,\psi(p;X,Y)$ for every $p\in U\cap(0,1)$.
\end{lemma}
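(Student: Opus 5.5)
We prove the lemma by a telescoping argument over levels. Write $\Lambda^{(m+1)}-\Lambda^{(m)}$ by refining each word: since $P_{u1}+P_{u2}=P_u$, we have
\[
  \Lambda^{(m)}(z)=\sum_{u,v\in\{1,2\}^m}\sum_{i,j\in\{1,2\}}P_{ui}(z)P_{vj}(z)\,\psi\bigl(z;x_u(z),x_v(z)\bigr).
\]
This gives
\[
  \Lambda^{(m+1)}(z)-\Lambda^{(m)}(z)=\sum_{u,v,i,j}P_{ui}P_{vj}\bigl(\psi(z;x_{ui},x_{vj})-\psi(z;x_u,x_v)\bigr).
\]
Because $D$ is convex and contains all coded points, the mean value inequality along segments in $D$ bounds each bracket by $L(\abs{x_{ui}-x_u}+\abs{x_{vj}-x_v})\leq 4L\rho^{m}$, using \cref{eq:coded-point-bounds}. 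The total weight satisfies $\sum_{u,i}\abs{P_{ui}}\,\sum_{v,j}\abs{P_{vj}}\leq B^{2(m+1)}$. Hence
\[
  \abs{\Lambda^{(m+1)}-\Lambda^{(m)}}\leq 4LB^2(\rho B^2)^m
\]
uniformly on $U$.

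Summing this geometric series (using $\rho B^2<1$) shows that the level sums are uniformly Cauchy on $U$. Each $\Lambda^{(m)}$ is a finite sum of holomorphic functions: $P_u$ is polynomial, $x_u$ is a polynomial in $\lambda_1,\lambda_2$ and hence holomorphic, and $\psi$ composed with these is holomorphic. By Weierstrass' theorem the uniform limit $\Lambda$ is holomorphic. Since $\Lambda^{(0)}=\psi(z;0,0)$, summing the telescoping bounds gives exactly \cref{eq:level-sum-bound}.

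For a real parameter $p\in U\cap(0,1)$, the weights $P_u(p)$ are the genuine probabilities $\mu_p$ assigns to the cylinders $S_u([0,1])$, via $\mu_p=\sum_{\abs u=m}P_u(S_u)_*\mu_p$. Thus $\Lambda^{(m)}(p)=\bbE\,\psi(p;X_m,Y_m)$, where $X_m=x_u$ if $X$ lies in cylinder $u$ (choosing the coding of $X$ by its symbolic sequence), and similarly for $Y_m$. By \cref{eq:coded-point-bounds}, $\abs{X-X_m}\leq 2\rho^m/(1-\rho)$. Together with the Lipschitz bound, and since $\psi(p;\cdot,\cdot)$ is continuous on $D\times D$ and the points $X,Y$ lie in $K_p$, which lies in the closure of the coded points, this gives $\Lambda^{(m)}(p)\to\bbE\psi(p;X,Y)$.

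The main subtlety is this last step. $X$ itself may lie only in $\overline D$, so we should evaluate $\psi$ on the closure via the uniform Lipschitz extension, or interpret $\bbE\psi(p;X,Y)$ as that continuous extension, which the Lipschitz bound makes well defined. The bound $\abs{\Lambda^{(m)}(p)-\bbE\psi}\leq 4L\rho^m/(1-\rho)$ then finishes the proof. One must also check that $\abs{S_{i,z}(0)}\leq 2$ and the refinement estimates hold on complex $U$; this is given in the setup.
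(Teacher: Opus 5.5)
Your proof is correct and is the paper's argument: the paper's proof is exactly this telescoping estimate $\sup_U\abs{\Lambda^{(m+1)}-\Lambda^{(m)}}\leq4LB^2(\rho B^2)^m$, obtained from the derivative bound, \cref{eq:coded-point-bounds} and $P_1+P_2=1$. It then sums from $\Lambda^{(0)}=\psi(z;0,0)$, uses locally uniform convergence for holomorphy, and passes to the limit in the finite coding approximations for the expectation identity; your explicit rate $4L\rho^m/(1-\rho)$ and your remark about evaluating $\psi$ on $\overline D$ just spell out what the paper calls ``dominated convergence'' (and in all of the paper's applications $D\supseteq[0,1]\supseteq K_p$, so that subtlety does not arise there).
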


\begin{proof}
The derivative bound, \cref{eq:coded-point-bounds}, and $P_1+P_2=1$ give
\[
  \sup_{z\in U}\abs{\Lambda^{(m+1)}(z)-\Lambda^{(m)}(z)}
  \leq4LB^2(\rho B^2)^m.
\]
Summing from $\Lambda^{(0)}=\psi(z;0,0)$ gives uniform convergence and
\cref{eq:level-sum-bound}. The limit is holomorphic by locally uniform convergence.
For $p\in U\cap(0,1)$, dominated convergence of the finite coding approximations gives the
expectation identity.
\end{proof}

\begin{lemma}\label{thm:fixed-dimension-analyticity}
For every fixed $s\in(0,1)$, the functions
\[
  p\mapsto\int_0^1\delta^{-s}\dd\cC^\times_\delta(\mu_p)
  \quad\text{and}\quad p\mapsto\overline H_s(p)
\]
are real analytic on $(0,1)$.
\end{lemma}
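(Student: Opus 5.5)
The plan is to write the cross-correlation integral as the expectation of a holomorphic kernel, and then apply \cref{thm:level-sums} on a small disc around each real parameter. By the formula for the cross term at the start of \cref{sec:fixed-dimension-family}, $\cC^\times_\delta(\mu_p)=2pq\,\bbP(S_2(Y)-S_1(X)\leq\delta)$, where $X,Y$ are independent with law $\mu_p$. The difference $S_2(Y)-S_1(X)$ lies in $[1-\lambda_1-\lambda_2,1]$, and $1-\lambda_1-\lambda_2>0$. Pushing forward therefore gives
\[
  \int_0^1\delta^{-s}\dd\cC^\times_\delta(\mu_p)
  =2pq\,\bbE\bigl(1-\lambda_2+\lambda_2Y-\lambda_1X\bigr)^{-s}.
\]
This suggests the kernel
\[
  \psi(z;x,y)=\bigl(1-\lambda_2(z)+\lambda_2(z)y-\lambda_1(z)x\bigr)^{-s},
\]
taken with the principal branch.

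Next, fix $p_0\in(0,1)$ and build the disc $U$ and the convex set $D$. Put $\rho_0=\max\{p_0^{1/s},q_0^{1/s}\}<1$. By continuity, a small disc $U$ about $p_0$ has $\abs{\lambda_i(z)}\leq\rho<1$ on it. Also $B=\sup_U(\abs z+\abs{1-z})$ tends to $1$ as the radius shrinks, so we can arrange $\rho B^2<1$.

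To choose $D$, I need the coded points $x_w(z)$ to stay uniformly close to the real interval $[0,1]$. I would prove that $\abs{x_w(z)-x_w(p_0)}\leq C\abs{z-p_0}$ uniformly in $w$. The idea is to telescope as in \cref{eq:coded-point-bounds}: $x_w$ is a sum of terms of the form (products of $\lambda$'s)$\cdot S_{i,z}(0)$. Each term is holomorphic in $z$, and its $z$-derivative is bounded by $Cn\rho^{n}$ on a slightly larger disc, by Cauchy estimates. These bounds are summable in $n$.

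With this estimate, take $D$ to be the $\eps$-neighbourhood of $[0,1]$, which is convex and bounded. Shrinking $U$ keeps every $x_w(z)$ inside $D$. On $U\times D\times D$ the base $1-\lambda_2+\lambda_2y-\lambda_1x$ then stays within a small distance of the real quantity $S_{2}(y)-S_{1}(x)$ evaluated at $p_0$ and real $x,y\in[0,1]$. That quantity is at least $1-\lambda_1(p_0)-\lambda_2(p_0)>0$. So, for $\eps$ and $U$ small enough, the base has real part bounded below by $c>0$.

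Consequently $\psi$ is holomorphic on $U\times D\times D$, and its partial derivatives in $x$ and $y$ are bounded by $s\,c^{-s-1}\sup\abs{\lambda_i}$. \Cref{thm:level-sums} then gives a holomorphic $\Lambda$ on $U$ with $\Lambda(p)=\bbE\psi(p;X,Y)$ for real $p\in U$. Hence $p\mapsto 2p(1-p)\Lambda(p)$ is real analytic near $p_0$, which proves the first claim.

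For the second claim, the entropy $h(p)=-p\log p-q\log q$ is real analytic and strictly positive on $(0,1)$. Formula \cref{eq:fixed-dimension-mean} then exhibits $\overline H_s$ as a constant times a quotient of real analytic functions with nonvanishing denominator, so $\overline H_s$ is real analytic on $(0,1)$.

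The step I expect to be the main obstacle is the uniform-in-$w$ control of the coded points $x_w(z)$ for complex $z$. It is needed to fit a single convex $D$ and to keep the base of the power away from the branch cut $(-\infty,0]$. I would handle it first, via the telescoping series with Cauchy estimates described above.
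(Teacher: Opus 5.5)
Your proposal is correct and follows the paper's proof essentially step for step. Both arguments use the same kernel $\psi$ on a small disc $U$ with $\rho B^2<1$, the same uniform-in-$w$ control of $x_w(z)$ near the real coded points, and a convex neighbourhood $D$ of $[0,1]$ on which the base has real part bounded below by a multiple of $1-\lambda_1(p_0)-\lambda_2(p_0)$. They then apply \cref{thm:level-sums}, multiply by $2z(1-z)$, and divide by the nonvanishing entropy. The only differences are immaterial: you conclude local real analyticity directly instead of extending $h$ holomorphically and patching the local extensions by the identity theorem.
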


\begin{proof}
Fix $p_0\in(0,1)$ and $\rho\in(\max\{\lambda_1(p_0),\lambda_2(p_0)\},1)$. On a small open disc
$U$ centred at $p_0$, the branches $\lambda_1$ and $\lambda_2$ are holomorphic with
$\abs{\lambda_1},\abs{\lambda_2}\leq\rho$, and $B=\sup_U(\abs z+\abs{1-z})$ is as close to $1$
as we wish, so we may assume $\rho B^2<1$.
Iterating the contraction maps gives
$\abs{x_w(z)-x_w(p_0)}\leq C\abs{z-p_0}/(1-\rho)$ uniformly in $w$. Since
$g_0=1-\lambda_1(p_0)-\lambda_2(p_0)>0$, we may shrink $U$ and choose a bounded convex
neighbourhood $D$ of $[0,1]$ containing every $x_w(z)$ such that the base of
\[
  \psi(z;x,y)=\bigl(1-\lambda_2(z)+\lambda_2(z)y-\lambda_1(z)x\bigr)^{-s}
\]
has real part at least $g_0/2$ on $U\times D\times D$. With the principal branch,
$\psi$ is holomorphic there and $\abs{\partial_x\psi},\abs{\partial_y\psi}\leq s(g_0/2)^{-s-1}$.
The level-sum lemma \cref{thm:level-sums} therefore gives a holomorphic function $\Lambda$
on $U$ with
\[
  \Lambda(p)=\frac1{2pq}\int_0^1\delta^{-s}\dd\cC^\times_\delta(\mu_p)
\]
for real $p\in U$. Multiplication by $2z(1-z)$ extends the cross-term integral holomorphically.
After shrinking $U$, the principal-branch extension $h(z)=-z\log z-(1-z)\log(1-z)$ is nonzero,
since $h(p_0)>0$, so \cref{eq:fixed-dimension-mean} also extends the mean holomorphically.
These local extensions agree on overlapping discs by the identity theorem, giving extensions to
a complex neighbourhood of $(0,1)$. Their restrictions to $(0,1)$ are real analytic.
\end{proof}

To estimate the cross term as $p\downarrow0$, let $N$ be the number of symbols $2$ before the
first symbol $1$ in the coding of $Y$. Then $\bbP(N=n)=pq^n$ for $n\geq0$, and
$Y=1-\lambda_2^N+\lambda_1\lambda_2^NY''$ with $Y''$ of law $\mu_p$ and independent of $N$.
Therefore
\begin{equation}\label{eq:initial-symbol-decomposition}
  S_2(Y)-S_1(X)=1-\lambda_2^{N+1}-\lambda_1\bigl(X-\lambda_2^{N+1}Y''\bigr),
\end{equation}
with $N$, $X$ and $Y''$ independent. We write
\[
  J(p)\coloneqq\frac1{2pq}\int_0^1\delta^{-s}\dd\cC^\times_\delta(\mu_p)
\]
for the normalised cross-term integral and put $T(p)=\log(1/\lambda_2)=\frac1s\log(1/q)$.
Omitting $\lambda_1(X-\lambda_2^{N+1}Y'')$ from the distance in
\cref{eq:initial-symbol-decomposition} gives the \emph{main term}
\begin{equation}\label{eq:main-term}
  J_0(p)\coloneqq\sum_{n\geq0}pq^n\bigl(1-\lambda_2^{n+1}\bigr)^{-s}
  =\bigl(e^{sT(p)}-1\bigr)\sum_{m\geq1}\bigl(e^{mT(p)}-1\bigr)^{-s}.
\end{equation}

\begin{proposition}\label{thm:fixed-dimension-range}
For every fixed $s\in(0,1)$, the function $p\mapsto\overline H_s(p)$ is continuous and strictly
positive on $(0,1/2]$, and
\[
  \lim_{p\downarrow0}\overline H_s(p)=0.
\]
\end{proposition}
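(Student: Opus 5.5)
The plan is to rewrite the mean profile through the normalised cross-term integral $J(p)$ and to treat the three claims separately. By \cref{eq:fixed-dimension-mean} and the definition of $J$,
\[
  \overline H_s(p)=2^{-s}\Gamma(1-s)\,\frac{2pq}{h(p)}\,J(p),
  \qquad
  J(p)=\bbE\bigl[(S_2(Y)-S_1(X))^{-s}\bigr],
\]
where $X,Y$ are independent with law $\mu_p$. Here $\dd\cC^\times_\delta(\mu_p)/(2pq)$ is the law of $S_2(Y)-S_1(X)$, which is supported in $[1-\lambda_1-\lambda_2,1]$.

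\emph{Continuity and positivity.} Continuity on $(0,1/2]$ follows at once from the real analyticity in \cref{thm:fixed-dimension-analyticity}. For positivity, the integrand satisfies $\delta^{-s}\geq1$ on $(0,1]$. The cross-term measure has total mass $\cC^\times_1(\mu_p)=2pq>0$, so $J(p)\geq1$. Also $h(p)>0$ and $\Gamma(1-s)>0$, hence $\overline H_s(p)>0$.

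\emph{Limit as $p\downarrow0$.} We have $h(p)\sim p\log(1/p)$, so $2pq/h(p)\sim 2/\log(1/p)\to0$. It therefore suffices to show that $J(p)$ stays bounded as $p\downarrow0$. This is the step I expect to be the main obstacle. The crude bound $J\leq(1-\lambda_1-\lambda_2)^{-s}\approx(s/p)^s$ is far too weak, since the gap is only of order $p/s$. The remedy is to use the decomposition \cref{eq:initial-symbol-decomposition} and compare $J$ with the main term $J_0$ of \cref{eq:main-term}.

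\emph{Reduction to the main term.} The perturbation $\lambda_1(X-\lambda_2^{N+1}Y'')$ lies in $[-\lambda_1,\lambda_1]$. Moreover, $1-\lambda_2^{n+1}\geq1-\lambda_2=1-q^{1/s}\geq p$. Since $\lambda_1=p^{1/s}\leq p/2$ for all small $p$ (as $1/s>1$), we get
\[
  S_2(Y)-S_1(X)\geq\tfrac12\bigl(1-\lambda_2^{N+1}\bigr).
\]
Hence $J(p)\leq 2^sJ_0(p)$ for small $p$.

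\emph{Boundedness of the main term.} Write $T=T(p)=\frac1s\log(1/q)\sim p/s$, so that $J_0(p)=(e^{sT}-1)\sum_{m\geq1}(e^{mT}-1)^{-s}$. The function $x\mapsto(e^{xT}-1)^{-s}$ is decreasing, so comparing the sum with an integral gives
\[
  \sum_{m\geq1}(e^{mT}-1)^{-s}\leq\int_0^\infty(e^{xT}-1)^{-s}\dd x
  =\frac1T\int_0^\infty(e^u-1)^{-s}\dd u .
\]
The last integral is finite: near $0$ the integrand behaves like $u^{-s}$ with $s<1$, and at infinity it decays like $e^{-su}$. Since $e^{sT}-1\sim sT$, this yields
\[
  \limsup_{p\downarrow0}J_0(p)\leq s\int_0^\infty(e^u-1)^{-s}\dd u<\infty .
\]
Therefore $J$ is bounded near $0$, and $\overline H_s(p)\leq C/\log(1/p)\to0$.
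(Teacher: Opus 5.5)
Your proposal is correct and follows essentially the same route as the paper. Continuity comes from analyticity and positivity from the cross-term mass $2pq$. For the limit you use the bound $\lambda_1\leq\frac12(1-\lambda_2^{N+1})$ in \cref{eq:initial-symbol-decomposition} to get $J\leq 2^sJ_0$, then bound $J_0$ by comparing the sum with $\frac1T\int_0^\infty(e^u-1)^{-s}\dd u$, and finish with $h(p)\sim p\log(1/p)$. The only cosmetic difference is that you justify $\lambda_1\leq p/2\leq\frac12(1-\lambda_2)$ via $1-q^{1/s}\geq p$ rather than via $1-\lambda_2\sim p/s$.
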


\begin{proof}
Continuity follows from \cref{thm:fixed-dimension-analyticity}, and positivity from
\cref{thm:fixed-dimension-profiles}, since the cross term has total mass $2pq>0$.

Since $1-\lambda_2\sim p/s$ and $\lambda_1=p^{1/s}=o(p)$ as $p\downarrow0$, we have
$\lambda_1\leq(1-\lambda_2)/2$ for all sufficiently small $p$, and then
\cref{eq:initial-symbol-decomposition} gives
\[
  S_2(Y)-S_1(X)\geq1-\lambda_2^{N+1}-\lambda_1\geq\tfrac12(1-\lambda_2^{N+1}).
\]
Hence, by \cref{eq:main-term},
\[
  J(p)\leq2^sJ_0(p)
  \leq2^s\,\frac{e^{sT(p)}-1}{T(p)}\int_0^\infty(e^u-1)^{-s}\dd u,
\]
where the last step compares the sum with the integral of the decreasing integrand. The
integral is finite because $0<s<1$, and $(e^{sT(p)}-1)/T(p)\to s$ as $p\downarrow0$, so the
cross-term integral is $O(p)$. Since $h(p)\sim p\log(1/p)$, the mean formula
\cref{eq:fixed-dimension-mean} gives
\[
  \overline H_s(p)=O\!\left(\frac{p}{h(p)}\right)
  =O\!\left(\frac1{\log(1/p)}\right)\longrightarrow0
  \qquad(p\downarrow0).\qedhere
\]
\end{proof}

\begin{proof}[Proof of \cref{thm:fixed-dimension-separation}]
By \cref{thm:fixed-dimension-analyticity,thm:fixed-dimension-range}, the mean is non-constant
and real analytic on $(0,1)$. Its derivative and each difference
$\overline H_s-\overline H_s(p_1)$ therefore have isolated zeros. This proves that $D_s$ has
no accumulation point in $(0,1/2]$. On each component of $(0,1/2)\setminus D_s$, the derivative
has constant sign, so the mean is strictly monotone. For fixed $p_1\in(0,1/2]$, positivity of
$\overline H_s(p_1)$ and the limit $\overline H_s(p)\to0$ as $p\downarrow0$ keep its level
set bounded away from zero. It is therefore contained in a compact subinterval of $(0,1)$
and is finite, since its points are isolated zeros of an analytic function.

Finally, let $p_1,p_2\in(0,1/2]$ have distinct means. Since the profiles are bounded and have
these Ces\`aro means,
\[
  \limsup_{t\to\infty}\abs{H_{\mu_{p_1}}^s(t)-H_{\mu_{p_2}}^s(t)}
  \geq\abs{\overline H_s(p_1)-\overline H_s(p_2)}>0,
\]
and the occupation-law criterion \cref{thm:main} gives mutual singularity.
\end{proof}

\subsection{Monotonicity for small dimension}\label{sec:small-dimension}

By \cref{thm:fixed-dimension-analyticity}, $J$ is real analytic, and $J(p)\geq1$ since the
cross term has total mass $2pq$ and is supported on $(0,1]$. We put
\[
  \kappa(p)\coloneqq\frac{d}{dp}\log\frac{pq}{h(p)}
  =\frac{q^2\log(1/q)-p^2\log(1/p)}{pq\,h(p)}.
\]
By \cref{eq:fixed-dimension-mean},
\begin{equation}\label{eq:log-derivative-mean}
  \frac{d}{dp}\log\overline H_s(p)=\kappa(p)+\frac{J'(p)}{J(p)}.
\end{equation}
We first quantify the strict increase of the entropy factor $pq/h(p)$ on $(0,1/2)$.

\begin{lemma}\label{thm:entropy-factor}
The function $\kappa$ is strictly positive on $(0,1/2)$. For $0<p\leq1/50$,
\[
  \kappa(p)\geq\frac{0.88}{p(1-\log p)},
\]
and for every $p_1\in(0,1/2)$ there is $c>0$ such that $\kappa(p)\geq c(1/2-p)$ for
$p_1\leq p<1/2$.
\end{lemma}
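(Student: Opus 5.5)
The formula for $\kappa$ makes the denominator $pq\,h(p)$ positive on $(0,1/2)$. So everything reduces to studying the numerator
\[
  F(p)\coloneqq q^2\log(1/q)-p^2\log(1/p),\qquad q=1-p.
\]
The plan is to prove positivity of $F$ on $(0,1/2)$, then extract the two quantitative bounds, one near $p=0$ and one near $p=1/2$.

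\textbf{Positivity.} Write $g(x)=x^2\log(1/x)$, so $F(p)=g(1-p)-g(p)$ and $F(1/2)=0$. I would show that $F$ is strictly decreasing on $(0,1/2)$, which forces $F>0$ there. Since
\[
  F'(p)=-g'(q)-g'(p),\qquad g'(x)=2x\log(1/x)-x,
\]
we have $-F'(p)=2p\log(1/p)+2q\log(1/q)-1=2h(p)-1$. For $p\in(0,1/2)$ this is strictly positive exactly when $h(p)>1/2$, which fails for small $p$. So monotonicity is not global, and I would instead argue via concavity. We have $F(0^+)=0$ and $F(1/2)=0$. The function $h$ is increasing on $(0,1/2)$, so $F'=1-2h$ changes sign exactly once, at the point $p^*$ with $h(p^*)=1/2$. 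Hence $F$ increases and then decreases. Together with the vanishing boundary values, this gives $F>0$ on the open interval $(0,1/2)$, and so $\kappa>0$ there.

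\textbf{Bound near $0$.} For $0<p\le1/50$ the key facts are $q^2\log(1/q)\ge q^2p$, $h(p)\le p\log(1/p)+p=p(1-\log p)$, and the subtraction $p^2\log(1/p)$. This yields
\[
  \kappa(p)\ge\frac{q^2p-p^2\log(1/p)}{pq\cdot p(1-\log p)}=\frac{q-p\log(1/p)/q}{p(1-\log p)}.
\]
It then remains to check that $q-p\log(1/p)/q\ge0.88$ on $(0,1/50]$. The left side is decreasing in $p$ there, because $p\log(1/p)$ increases for $p<1/e$ while $q$ decreases. So it suffices to evaluate at $p=1/50$: this gives $0.98-(0.02\cdot3.912)/0.98\approx0.98-0.0798=0.900\ge0.88$. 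I expect this numerical endpoint check to be the most delicate step, so I would verify the constants carefully. If needed, I would replace $h(p)\le p(1-\log p)$ by the sharper inequality $q\log(1/q)\le p$.

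\textbf{Bound near $1/2$.} Since $F(1/2)=0$ and $F'(1/2)=1-2\log 2<0$, the function $F(p)/(1/2-p)$ extends continuously to $[p_1,1/2]$ with value $2\log2-1>0$ at $1/2$. By the positivity step it is also positive on $[p_1,1/2)$, so it has a positive minimum on this compact interval. The denominator $pq\,h(p)$ is bounded above on $[p_1,1/2]$, and dividing by it gives $\kappa(p)\ge c(1/2-p)$.
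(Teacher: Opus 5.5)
Your proposal is correct and follows the paper's proof. It uses the same numerator $F=\Phi$, with $F'=1-2h$ strictly decreasing for positivity, and the same bounds $\log(1/q)\geq p$ and $q\log(1/q)\leq p$ near $0$; the latter is precisely what justifies $h(p)\leq p(1-\log p)$, not a sharper alternative to it. The only variation is the last claim: the paper uses the concave chord bound $\Phi(p)\geq\Phi(p_1)(1/2-p)/(1/2-p_1)$ with $pq\,h(p)\leq\frac14\log2$ to get an explicit $c$, while your compactness argument via $F'(1/2)=1-2\log2<0$ works equally well.
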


\begin{proof}
Put $\Phi(p)=q^2\log(1/q)-p^2\log(1/p)$, so that $\kappa=\Phi/(pqh)$. Then
$\Phi(0+)=\Phi(1/2)=0$, $\Phi'(p)=1-2h(p)$ and $\Phi''(p)=-2h'(p)<0$ on $(0,1/2)$, so $\Phi$
is strictly concave and therefore positive on $(0,1/2)$, and $\kappa>0$.

For the second claim, we use $\log(1/q)\geq p$ and $q\log(1/q)\leq p$, which give
$\Phi(p)\geq p\bigl(q^2-p\log(1/p)\bigr)$ and $h(p)\leq p(1-\log p)$. For $p\leq1/50$ we have
$q^2\geq0.96$ and $p\log(1/p)\leq0.08$, so
\[
  \kappa(p)\geq\frac{q^2-p\log(1/p)}{q\,p(1-\log p)}\geq\frac{0.88}{p(1-\log p)}.
\]

The final claim follows from concavity,
$\Phi(p)\geq\Phi(p_1)(1/2-p)/(1/2-p_1)$, and $pqh(p)\leq\frac14\log2$.
\end{proof}

We first bound the derivative of the main term in \cref{eq:main-term}.

\begin{lemma}\label{thm:main-term-derivative}
Let $0<s\leq1/2$ and $0<p\leq1/50$. Then $\abs{J_0'(p)}\leq6p^{-s}$.
\end{lemma}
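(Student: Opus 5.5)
The plan is to reduce to a one-variable problem in $T=T(p)$ using the closed form \cref{eq:main-term}, $J_0=(e^{sT}-1)F(T)$ with $F(T)=\sum_{m\geq1}(e^{mT}-1)^{-s}$. By the chain rule, $J_0'(p)=\frac{d}{dT}\bigl[(e^{sT}-1)F(T)\bigr]\cdot T'(p)$, where $T'(p)=1/(sq)$. We cannot differentiate termwise in $p$ and bound each piece separately. The reason is that the sum over $n$ is a Riemann sum with step $T\asymp p/s$. Its two natural pieces, from $\frac{d}{dp}(pq^n)$ and from $\frac{d}{dp}(1-\lambda_2^{n+1})^{-s}$, are each of order $1/p$ and cancel to leading order. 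The leading term $(e^{sT}-1)T^{-1}\int_0^\infty(e^u-1)^{-s}\dd u$ tends to the constant $s\int_0^\infty(e^u-1)^{-s}\dd u$. Only the discretisation error near $u=0$, where the integrand behaves like $u^{-s}$, produces the $p^{-s}$ growth. So the proof should isolate exactly this error.

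First, write $g(u)=(e^u-1)^{-s}$ and obtain a Euler--Maclaurin type expansion. Integrating $\sum_{m\geq1}g(mT)$ against the sawtooth $B_1(\{x\})$, or equivalently using the Mellin representation, gives
\[
  F(T)=\frac{I}{T}+\zeta(s)T^{-s}+R(T),\qquad I=\int_0^\infty g(u)\dd u.
\]
Here $R$ is built from $g(u)-u^{-s}$, which is $O(u^{1-s})$ near $0$ and smooth. I would aim to show $\abs{R(T)}$ and $\abs{R'(T)}$ are bounded by explicit small constants for $0<T\leq T_0$. Since $p\leq1/50$ and $s\leq1/2$, we have $T\leq\frac1s\log\frac{50}{49}$, but this is not small uniformly as $s\downarrow0$. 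So I would rather bound $R$ directly in terms of $\int\abs{(g-u^{-s})'}$-type quantities, which are uniformly controlled for $s\leq1/2$.

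Second, differentiate. We have
\[
  \frac{d}{dT}\Bigl[(e^{sT}-1)\tfrac IT\Bigr]=I\,\frac{sTe^{sT}-e^{sT}+1}{T^2},
\]
which is $O(s^2I)$. We also have $\frac{d}{dT}\bigl[(e^{sT}-1)\zeta(s)T^{-s}\bigr]\approx s(1-s)\zeta(s)T^{-s}$. The $R$ terms contribute $O(s)$ times bounded quantities. Multiplying by $T'(p)=1/(sq)\leq 50/(49s)$ cancels the factor $s$ in every term.

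Third, convert back to $p$. Since $T\geq p/s$, we get $T^{-s}\leq s^sp^{-s}\leq p^{-s}$. Together with $\abs{\zeta(s)}\leq 1/(1-s)+\tfrac12$-type bounds on $(0,1/2]$ (namely $\abs{\zeta(s)}\leq\abs{\zeta(1/2)}<1.47$), this gives $(1-s)\abs{\zeta(s)}\frac{50}{49}p^{-s}\leq 1.5\,p^{-s}$. The remaining $O(1)$ terms are absorbed into $6p^{-s}$ because $p^{-s}\geq1$.

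The main obstacle is the first step: obtaining the expansion of $F$ with a derivative bound on the remainder $R$ that is explicit and uniform in $s\in(0,1/2]$, including $s\to0$, where $sI$ stays bounded but $I$ itself blows up like $1/s$. I would first try writing $F(T)-I/T=-\int_0^\infty B_1(\{x\})\,T g'(Tx)\dd x$ minus the boundary term. I would then split $g'=-su^{-s-1}+(g'+su^{-s-1})$: the first part gives $\zeta(s)T^{-s}$ exactly, and the second is integrable with an $s$-uniform bound. After that I would differentiate under the integral in $T$.
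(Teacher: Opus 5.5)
Your plan would prove the lemma, but it takes a genuinely different route from the paper.

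\textbf{How the paper argues.} The paper never compares the sum with $\int_0^\infty$, so it never meets the singularity at $0$ and never needs $\zeta(s)$. Write $f(u)=(e^u-1)^{-s}$ (your $g$) and $g(u)=uf'(u)$. The paper compares $T\sum_{m\geq1}f(mT)$ with $\int_T^\infty f$, using only that $f$ is decreasing and convex. This gives $0\leq T\sum_{m\geq1}f(mT)-\int_T^\infty f\leq Tf(T)$ and $\abs{T\sum_{m\geq1}g(mT)-\int_T^\infty g}\leq T\int_T^\infty\abs{g'}\leq T(2f(T)+\abs{g(T)})$.

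The cancellation you identify then comes from $\int_T^\infty g=-Tf(T)-\int_T^\infty f$. This leaves $\int_T^\infty f$ with coefficient $(sTe^{sT}-e^{sT}+1)/T^2\leq s^2e^{sT}/2$. The growth $p^{-s}$ comes from the boundary values $f(T)\leq T^{-s}$ and $\abs{g(T)}\leq s(1+T)T^{-s}\leq T^{-s}$, not from a $\zeta(s)T^{-s}$ term. Uniformity in $s$ is automatic, since the only global quantity left is $s\int_T^\infty f\leq 2.3$.

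\textbf{What each route buys.} Your expansion gives more: the true leading behaviour $J_0'(p)\sim(1-s)\zeta(s)s^sp^{-s}$ as $p\downarrow0$. You pay for it with a uniform remainder analysis that the paper sidesteps.

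\textbf{Three fixes when you carry it out.}

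First, use $\{x\}$ rather than $B_1(\{x\})$. Since your $g(0^+)=\infty$, the $B_1$ form has an infinite boundary term and an integrand that is not integrable at $0$. The sign in front of the integral should also be $+$. The exact identity is $F(T)-I/T=\int_0^\infty\{x\}\,Tg'(Tx)\dd x$, which follows from $g(mT)-\int_{m-1}^m g(Tx)\dd x=\int_{m-1}^m\{x\}\,Tg'(Tx)\dd x$. With $\psi=g'+su^{-s-1}$, the singular part is exactly $-sT^{-s}\int_0^\infty\{x\}x^{-s-1}\dd x=\zeta(s)T^{-s}$, and $R(T)=\int_0^\infty\{x\}\,T\psi(Tx)\dd x$.

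Second, $R'(T)$ is not bounded as $T\downarrow0$; it is of order $T^{-s}$, so do not aim for a constant bound. Since $R'$ is multiplied by $e^{sT}-1\leq sTe^{sT}$, what you need is $\abs{TR'(T)}\leq\int_0^\infty\abs{(u\psi(u))'}\dd u$. Differentiating under the integral gives exactly this. You also have $\abs{R(T)}\leq\int_0^\infty\abs{\psi}$.

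Third, these constants are not small. Indeed $\int_0^\infty\abs{\psi}=2\max_u\bigl(u^{-s}-(e^u-1)^{-s}\bigr)\to2$ as $s\downarrow0$, which is also where $p^{-s}\to1$. After multiplying by $T'(p)$, you have $sI=\pi s/\sin(\pi s)\leq\pi/2$ and the $\zeta$-term is at most about $0.8\,p^{-s}$. So you must show $\int\abs{\psi}+\int\abs{(u\psi)'}$ is at most about $4$, uniformly in $s\in(0,1/2]$. The first integral is at most $2$, because $u^{-s}-(e^u-1)^{-s}$ is unimodal and at most $1$. The second appears numerically to stay below $1$. Proving this requires a genuine shape analysis, and it is the only place where the explicit constant $6$ is really at stake.
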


\begin{proof}
Put $f(x)=(e^x-1)^{-s}$ and $g(x)=xf'(x)$ for $x>0$. Since $f$ is positive, decreasing
and convex, comparison on the intervals $[mT(p),(m+1)T(p)]$ and integration by parts give
\[
  0\leq T(p)\sum_{m\geq1}f(mT(p))-\int_{T(p)}^\infty f\leq T(p)f(T(p)),
\]
and
\[
  \Bigl|T(p)\sum_{m\geq1}g(mT(p))-\int_{T(p)}^\infty g\Bigr|
  \leq T(p)\int_{T(p)}^\infty\abs{g'}
  \leq T(p)\bigl(2f(T(p))+\abs{g(T(p))}\bigr).
\]
Using $\int_{T(p)}^\infty g=-T(p)f(T(p))-\int_{T(p)}^\infty f$, 
we obtain
\begin{align*}
  \frac{\abs{J_0'(p)}}{T'(p)}
  &\leq\Bigl(\frac{se^{sT(p)}}{T(p)}-\frac{e^{sT(p)}-1}{T(p)^2}\Bigr)
    \int_{T(p)}^\infty f
  +se^{sT(p)}\bigl(3f(T(p))+\abs{g(T(p))}\bigr)\\
  &\leq se^{sT(p)}\Bigl(\frac s2\int_{T(p)}^\infty f+3f(T(p))+\abs{g(T(p))}\Bigr),
\end{align*}
where we used $0\leq xe^x-e^x+1\leq x^2e^x/2$ for $x\geq0$.

The bounds $f(x)\leq x^{-s}$ for $x>0$ and $f(x)\leq(1-e^{-1})^{-s}e^{-sx}$ for $x\geq1$
give $s\int_{T(p)}^\infty f\leq s/(1-s)+1.26\leq2.3$. Also,
$\abs{g(T(p))}\leq s(1+T(p))T(p)^{-s}$, since $xe^x\leq(1+x)(e^x-1)$ for $x\geq0$.
As $p/s\leq T(p)\leq p/(sq)$, we have $s(1+T(p))\leq s+p/q\leq1$ and
$T(p)^{-s}\leq p^{-s}$. Finally, $e^{sT(p)}=1/q$ and $T'(p)=1/(sq)$ give
\[
  \abs{J_0'(p)}
  \leq\frac{1}{q^2}\bigl(1.15+4p^{-s}\bigr)\leq6p^{-s}.\qedhere
\]
\end{proof}

The difference $J(p)-J_0(p)$ is small together with its derivative. We bound the derivative
through a holomorphic extension to a disc whose radius is comparable to $p$.

\begin{lemma}\label{thm:remainder-derivative}
Let $0<s\leq1/2$ and $0<p\leq1/50$. Then
\[
  \abs{J'(p)-J_0'(p)}\leq3000\,s\,(3p/2)^{1/s}p^{-2-s}.
\]
\end{lemma}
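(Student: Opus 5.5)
The plan is to write $J-J_0$ as a holomorphic function on a disc of radius comparable to $p$ centred at $p$, bound it uniformly there, and then apply the Cauchy estimate for the derivative. The uniform bound will come from the fact that the remainder is driven by the small contraction $\lambda_1=p^{1/s}$. By \cref{eq:initial-symbol-decomposition},
\[
  J(p)-J_0(p)=\bbE\Bigl[\bigl(1-\lambda_2^{N+1}-\lambda_1(X-\lambda_2^{N+1}Y'')\bigr)^{-s}-\bigl(1-\lambda_2^{N+1}\bigr)^{-s}\Bigr].
\]
I will complexify this exactly as in \cref{thm:fixed-dimension-analyticity}. For $z$ in the disc $U=\{\abs{z-p}<p/2\}$, replace $pq^n$ by $z(1-z)^n$ and $\lambda_i$ by the principal branches. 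The expectation over $X,Y''$ becomes the limit of level sums as in \cref{thm:level-sums}, applied for each fixed $n$ to the kernel
\[
  \psi_n(z;x,y)=\bigl(a_n(z)-\lambda_1(z)(x-\lambda_2(z)^{n+1}y)\bigr)^{-s}-a_n(z)^{-s},
  \qquad a_n(z)=1-\lambda_2(z)^{n+1}.
\]
On $U$ we have $\abs{\lambda_1(z)}\leq(3p/2)^{1/s}$, and this is where the factor $(3p/2)^{1/s}$ in the statement comes from.

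The key estimates on $U$ are as follows. First, $\operatorname{Re}a_n(z)\gtrsim\min\{1,(n+1)p\}$. This holds because $1-\lambda_2(z)\approx z/s$, so $\abs{1-\lambda_2}\geq p/(2s)$ and $\abs{\lambda_2}\leq1-cp$ on $U$. Second, the coded points stay in a bounded convex set $D$. Third, the mean-value bound
\[
  \abs{\psi_n}\leq s\,\abs{\lambda_1}\,\sup\abs{x-\lambda_2^{n+1}y}\,\bigl(\tfrac12\operatorname{Re}a_n\bigr)^{-1-s}
\]
holds once $\abs{\lambda_1}\cdot\operatorname{diam}D\leq\frac12\operatorname{Re}a_n$, which is guaranteed since $p^{1/s}\leq p^2\ll p$ for $s\leq1/2$. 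This gives
\[
  \abs{\psi_n}\lesssim s(3p/2)^{1/s}\bigl((n+1)p\bigr)^{-1-s}.
\]
A level-sum convergence argument needs $\rho B^2<1$. Here $B=\sup_U(\abs z+\abs{1-z})\leq1+p$ and $\rho=1-cp$, so this fails unless $c$ is large. To avoid this I will instead bound the limiting holomorphic function through real-parameter convergence plus a maximum-principle argument, or shrink the radius to $p/(\text{const})$, adjusting the constant $3000$ accordingly. Summing over $n$ with weights $\abs{z}\abs{1-z}^n\leq\frac32p(1+p/2)^n$ is the delicate point: the weights do not decay in $n$ for complex $z$, since $\abs{1-z}$ can exceed $1$. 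The fix is to use instead $\abs{1-z}\leq1-p/2$ on the half of the disc, or better, a disc of radius $p/2$ intersected with a sector. Concretely I would take radius $cp$ with $c<1$ such that $\abs{1-z}\leq1-(1-c)p$. Then
\[
  \sum_n\abs{z(1-z)^n}\bigl((n+1)p\bigr)^{-1-s}\lesssim p\cdot p^{-1-s}\sum_n(n+1)^{-1-s}\lesssim p^{-s}/s.
\]
I must check that the factor $1/s$ from $\sum(n+1)^{-1-s}$ cancels against the leading $s$. This yields $\sup_U\abs{J-J_0}\lesssim(3p/2)^{1/s}p^{-s}$, with an $s$-dependence that I will need to track.

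Finally, Cauchy's estimate on a disc of radius $\asymp p$ gives
\[
  \abs{J'(p)-J_0'(p)}\leq\frac{\sup_U\abs{J-J_0}}{cp}\lesssim(3p/2)^{1/s}p^{-1-s}.
\]
The statement has the weaker factor $p^{-2-s}$, which leaves room: I can afford a crude $O(1/p)$ loss in the sum over $n$, for example by using only $\operatorname{Re}a_n\geq p/(2s)\cdot\min\{1,n+1\}$ without summing the decay. That $O(1/p)$ loss is what the statement's $p^{-2-s}$ allows. The main obstacle is the uniform lower bound on $\operatorname{Re}a_n(z)$ and on the base of the power, uniformly in $n$ and over complex $z$ with principal branches, together with controlling $\abs{1-z}^n$. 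Once these are in place, the explicit constant $3000\,s$ is routine bookkeeping using $p\leq1/50$ and $s\leq1/2$.
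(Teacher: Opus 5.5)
Your overall architecture matches the paper's. You complexify the first-symbol decomposition on $U=\set{z\colon\abs{z-p}<p/2}$, control each $\Lambda_n$ by \cref{thm:level-sums}, sum against $z(1-z)^n$, and finish with Cauchy's estimate. But there is a genuine gap in the quantitative core.

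You discard the level-sum lemma on this disc because the crude bound $B\leq1+p$ only gives $\lambda_{2,\max}B^2\leq e^{-p/(2s)+2p}$, where $\lambda_{i,\max}=\sup_U\abs{\lambda_i}$. That bound does not establish $\lambda_{2,\max}B^2<1$ once $s\geq1/4$. The missing observation is that $\abs z+\abs{1-z}$ exceeds $1$ only to second order in $z-p$. Apply $\sqrt{1+x}\leq1+x/2$ to $\abs z^2=p^2+2p\Re(z-p)+\abs{z-p}^2$ and to the analogous expansion of $\abs{1-z}^2$. The linear terms cancel, and $B\leq1+\abs{z-p}^2/(2pq)\leq1+p/5$. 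Since $\lambda_{2,\max}=(1-p/2)^{1/s}\leq e^{-p/(2s)}$ and $s\leq1/2$, this gives $\lambda_{2,\max}B^2\leq e^{-3p/(10s)}$. Hence $\xi=1-\lambda_{2,\max}B^2\geq p/4$, so the lemma applies on the full disc and the constant $3000$ can be tracked.

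Neither of your fallbacks repairs this. A maximum-principle argument cannot turn convergence at real parameters into bounds at non-real $z$. Shrinking the radius gives a valid argument, but it leaves the constant uncomputed, whereas the statement fixes it. Two smaller points: $\abs{1-z}\leq1-p/2$ already holds on the whole disc, so no sector is needed. Also, the bound $\abs{x_w(z)}\leq5/2$ that you need for $\abs{\lambda_1}\diam D\ll\Re a_n$ must come from the recursion $x_w=1-\lambda_2^k+\lambda_1\lambda_2^kx_{w'}$. The estimate \cref{eq:coded-point-bounds} with $\rho=\lambda_{2,\max}$ only gives a bound that can be of order $1/p$.

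Second, your estimate $\sup_U\abs{J-J_0}\lesssim(3p/2)^{1/s}p^{-s}$ bounds the complexified expectation by $\sup\abs{\psi_n}$. For non-real $z$ the level-$m$ weights $P_u(z)P_v(z)$ have total variation $B^{2m}\to\infty$, so no such bound is available. The only control is the telescoping estimate of \cref{thm:level-sums}. For your kernel, which vanishes at $(0,0)$, it reads $\abs{\Lambda_n(z)}\leq4L_nB^2/\xi$, with $L_n\leq s\lambda_{1,\max}\bigl(\tfrac12(1-\lambda_{2,\max}^{n+1})\bigr)^{-s-1}$.

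The factor $1/\xi\asymp1/p$ is exactly where the extra power in $p^{-2-s}$ comes from. It is not a crude loss in the sum over $n$. Your version would give the stronger bound $p^{-1-s}$, which should have been a warning sign. To finish, sum $\tfrac{3p}{2}(1-p/2)^n$ against $(1-\lambda_{2,\max}^{n+1})^{-s-1}\leq2^{s+1}\bigl(1+(2s/((n+1)p))^{s+1}\bigr)$; the factor $s^{s+1}$ absorbs the $1/s$ from $\sum_n(n+1)^{-1-s}$. This gives $\sup_U\abs{\Delta}\leq1296\,s\lambda_{1,\max}p^{-1-s}$, and Cauchy's estimate with radius $p/2$ yields the stated bound.
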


\begin{proof}
Let $U=\set{z\in\bbC\colon\abs{z-p}<p/2}$. On $U$ we have $\Re z\geq p/2$,
$\abs z\leq3p/2$ and $1-3p/2\leq\abs{1-z}\leq1-p/2$. For the principal branches $\lambda_1$
and $\lambda_2$, put
\[
  \lambda_{1,\max}\coloneqq\sup_U\abs{\lambda_1}=(3p/2)^{1/s},
  \qquad
  \lambda_{2,\max}\coloneqq\sup_U\abs{\lambda_2}=(1-p/2)^{1/s}\leq e^{-p/(2s)}.
\]
Since $\lambda_{1,\max}\leq(3p/2)^2$, $1-\lambda_{2,\max}\geq p/2$ and $p\leq1/50$, we have
\begin{equation}\label{eq:contraction-modulus-bounds}
  5\lambda_{1,\max}\leq p/4\leq\tfrac12(1-\lambda_{2,\max}).
\end{equation}
Applying $\sqrt{1+x}\leq1+x/2$ to the two moduli gives
\[
  \abs z+\abs{1-z}\leq1+\frac{\abs{z-p}^2}{2pq}
  \leq1+\frac p{8q}\leq1+\frac p5.
\]
Thus $B\leq1+p/5$, and $1-e^{-x}\geq\frac12\min(x,1)$ for $x\geq0$ gives
\begin{equation}\label{eq:contraction-disc}
  \lambda_{2,\max} B^2\leq e^{-p/(2s)+2p/5}\leq e^{-3p/(10s)},
  \qquad
  \xi\coloneqq1-\lambda_{2,\max} B^2\geq\tfrac12\min\Bigl(\frac{3p}{10s},1\Bigr)\geq\frac p4,
\end{equation}
using $s\leq1/2$ and $p\leq1/50$ in the last step.

For a finite word $w$ and $z\in U$, writing $w=2^k1w'$ or $w=2^k$ gives
$x_w=1-\lambda_2^k+\lambda_1\lambda_2^kx_{w'}$ or $x_w=1-\lambda_2^k$, so
$\abs{x_w}\leq2+\lambda_{1,\max}\abs{x_{w'}}$, and induction on the length gives
$\abs{x_w(z)}<2/(1-\lambda_{1,\max})\leq5/2$. 

Let
$D=\set{x\in\bbC\colon\abs x<5/2}$.
For $n\geq0$, $z\in U$ and $x,y\in D$, define
\[
  \psi_n(z;x,y)=\bigl(1-\lambda_2^{n+1}-\lambda_1(x-\lambda_2^{n+1}y)\bigr)^{-s}.
\]
By \cref{eq:contraction-modulus-bounds}, the real part of the base is at least
$1-\lambda_{2,\max}^{n+1}-5\lambda_{1,\max}\geq\frac12(1-\lambda_{2,\max}^{n+1})>0$, so $\psi_n$ is holomorphic on
$U\times D\times D$, and
\begin{equation}\label{eq:kernel-bounds}
  \abs{\psi_n}\leq\bigl(\tfrac12(1-\lambda_{2,\max}^{n+1})\bigr)^{-s},
  \qquad
  \abs{\partial_x\psi_n},\abs{\partial_y\psi_n}
  \leq s\lambda_{1,\max}\bigl(\tfrac12(1-\lambda_{2,\max}^{n+1})\bigr)^{-s-1}.
\end{equation}
The level-sum lemma, \cref{thm:level-sums},
applies with $\rho=\lambda_{2,\max}\geq\lambda_{1,\max}$ and the bounds
in \cref{eq:kernel-bounds}, giving a holomorphic limit $\Lambda_n$ and
\begin{equation}\label{eq:lambda-bound}
  \bigl|\Lambda_n(z)-\bigl(1-\lambda_2(z)^{n+1}\bigr)^{-s}\bigr|
  \leq\frac{12\,s\lambda_{1,\max}}{\xi}\bigl(1-\lambda_{2,\max}^{n+1}\bigr)^{-s-1}.
\end{equation}
We define
\[
  \Delta(z)=\sum_{n\geq0}z(1-z)^n
  \Bigl[\Lambda_n(z)-\bigl(1-\lambda_2(z)^{n+1}\bigr)^{-s}\Bigr].
\]
By \cref{eq:lambda-bound} and $\abs{1-z}\leq1-p/2$, the series converges uniformly on $U$, so
$\Delta$ is holomorphic. For real $z\in U$, the expectation identity in \cref{thm:level-sums}
and the decomposition \cref{eq:initial-symbol-decomposition} at the first symbol $1$ give
$\Delta(z)=J(z)-J_0(z)$. Moreover,
\[
  \sup_U\abs\Delta\leq\frac{12\,s\lambda_{1,\max}}{\xi}\sum_{n\geq0}\frac{3p}{2}(1-p/2)^n
  \bigl(1-\lambda_{2,\max}^{n+1}\bigr)^{-s-1}.
\]
Since $\lambda_{2,\max}^{n+1}\leq e^{-(n+1)p/(2s)}$ and $1-e^{-x}\geq\frac12\min(x,1)$,
\[
  \bigl(1-\lambda_{2,\max}^{n+1}\bigr)^{-s-1}
  \leq2^{s+1}\Bigl(1+\Bigl(\frac{2s}{(n+1)p}\Bigr)^{s+1}\Bigr).
\]
Together with $\sum_{n\geq0}(n+1)^{-s-1}\leq1+1/s$, this bounds the sum by
\[
  3\cdot2^{s+1}+\frac32\,4^{s+1}s^s(s+1)\,p^{-s}
  \leq27p^{-s},
\]
using $p^{-s}\geq1$ and $s\leq1/2$. With $1/\xi\leq4/p$ from \cref{eq:contraction-disc},
we obtain $\sup_U\abs\Delta\leq1296\,s\lambda_{1,\max} p^{-1-s}$. Cauchy's estimate now gives
\[
  \abs{J'(p)-J_0'(p)}\leq\frac2p\sup_U\abs\Delta
  \leq3000\,s\lambda_{1,\max} p^{-2-s}.\qedhere
\]
\end{proof}

For parameters away from zero, we bound the second derivative on a disc of fixed radius.
For $s>0$, we put $\gamma=(1-5\cdot10^{-5})^{1/s}$.

\begin{lemma}\label{thm:second-derivative-small-s}
Let $0<s\leq2\cdot10^{-5}$ and $10^{-4}\leq p\leq1/2$. Then
$\abs{J''(p)}\leq10^{11}s\gamma$.
\end{lemma}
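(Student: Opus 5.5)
The plan is to bound $J''$ through a holomorphic extension of $J$ to a disc of \emph{fixed} radius, and then apply Cauchy's estimate, as in \cref{thm:remainder-derivative}. The key observation is that for such small $s$ both contraction ratios are tiny on the whole parameter range. Indeed, $\gamma=(1-5\cdot10^{-5})^{1/s}\leq e^{-5\cdot10^{-5}/s}\leq e^{-5/2}<0.09$. So $J$ should be within $O(s\gamma)$ of the constant $1$, uniformly on a complex neighbourhood.

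\textbf{Step 1: choice of disc.} Fix $p\in[10^{-4},1/2]$ and let $U=\set{z\colon\abs{z-p}<r_0}$ with $r_0=5\cdot10^{-5}$. On $U$ we have $\abs z\leq1/2+r_0\leq1-r_0$ and $\abs{1-z}\leq1-p+r_0\leq1-r_0$. Hence the principal branches satisfy $\abs{\lambda_1},\abs{\lambda_2}\leq\gamma$ on $U$.

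As in the proof of \cref{thm:remainder-derivative}, the estimate $\sqrt{1+x}\leq1+x/2$ gives
\[
  B\leq1+\frac{r_0^2}{2pq}\leq1+2\cdot10^{-5}.
\]
Therefore $\rho B^2<1/10$ with $\rho=\gamma$. The coded points satisfy $\abs{x_w(z)}\leq2/(1-\gamma)<5/2$ by \cref{eq:coded-point-bounds}, so I take $D=\set{\abs x<5/2}$.

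\textbf{Step 2: the kernel.} Take the kernel from the proof of \cref{thm:fixed-dimension-analyticity},
\[
  \psi(z;x,y)=\bigl(1-w\bigr)^{-s},\qquad w=\lambda_2(1-y)+\lambda_1x.
\]
On $U\times D\times D$ we have $\abs w\leq\gamma(7/2+5/2)=6\gamma<0.55$. Hence $\Re(1-w)\geq0.45$, so $\psi$ is holomorphic there. Moreover
\[
  \abs{\partial_x\psi},\abs{\partial_y\psi}\leq s\gamma\,(0.45)^{-s-1}\leq3s\gamma\eqqcolon L.
\]
\Cref{thm:level-sums} then gives a holomorphic $\Lambda$ on $U$ with $\Lambda=J$ on $U\cap(0,1)$ and
\[
  \sup_U\abs{\Lambda-\psi(\cdot;0,0)}\leq\frac{4LB^2}{1-\gamma B^2}\leq14s\gamma .
\]
Separately, $\psi(z;0,0)=(1-\lambda_2)^{-s}$ satisfies $\abs{(1-\lambda_2)^{-s}-1}\leq Cs\gamma$ with a small absolute $C$. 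This uses $\abs{\log(1-\lambda_2)}\leq2\gamma$ and $\abs{e^{u}-1}\leq2\abs u$ for small $u$. Combining the two bounds gives $\sup_U\abs{\Lambda-1}\leq20s\gamma$, say.

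\textbf{Step 3: Cauchy's estimate.} The constant $1$ has vanishing second derivative. Cauchy's estimate on the disc of radius $r_0$ therefore gives
\[
  \abs{J''(p)}\leq\frac{2}{r_0^2}\sup_U\abs{\Lambda-1}\leq\frac{2\cdot20}{2.5\cdot10^{-9}}\,s\gamma
  =1.6\cdot10^{10}s\gamma\leq10^{11}s\gamma.
\]

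\textbf{Main obstacle.} The numerics are generous, so I expect the main difficulty to be bookkeeping. First, I must check that the level-sum identification $\Lambda=J$ from \cref{thm:fixed-dimension-analyticity} holds on this fixed-size disc, not just on a small one. Second, I must check that the principal-branch bounds $\abs{\lambda_i}\leq\gamma$ really hold near both ends of the range. This is tightest at $p=10^{-4}$ for $\lambda_2$ and at $p=1/2$ for $\lambda_1$, which is exactly why $\gamma$ carries the margin $5\cdot10^{-5}$.
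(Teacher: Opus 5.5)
Your proposal is correct and follows essentially the same route as the paper. Both use the disc of radius $5\cdot10^{-5}$ about $p$, the bounds $\abs{\lambda_1},\abs{\lambda_2}\leq\gamma$ obtained from $\abs z,\abs{1-z}\leq1-5\cdot10^{-5}$, the level-sum lemma with $\rho=\gamma$, $L=3s\gamma$ and $D=\set{\abs x<5/2}$ to get $\sup_U\abs{\Lambda-1}=O(s\gamma)$, and Cauchy's estimate for $J''$.

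The differences are cosmetic. You bound $\abs{\psi(\cdot;0,0)-1}$ separately and use a sharper bound on $B$, which gives $20s\gamma$ in place of the paper's $32s\gamma$. One small correction to your closing remark: the margin in $\gamma$ is tight only for $\lambda_2$ at $p=10^{-4}$, since $\abs{\lambda_1}\leq(1/2+5\cdot10^{-5})^{1/s}$ is far smaller than $\gamma$.
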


\begin{proof}
Put $r=5\cdot10^{-5}$ and $U=\set{z\in\bbC\colon\abs{z-p}<r}$. On $U$ we have
$\abs z\leq1/2+r$ and $\abs{1-z}\leq1-r$, both with positive real parts, so the principal
branches satisfy $\abs{\lambda_1(z)}\leq(1/2+r)^{1/s}\leq\gamma$ and
$\abs{\lambda_2(z)}\leq\gamma$. The bound on $s$ gives $\gamma\leq1/12$, and
$B\coloneqq\sup_U(\abs z+\abs{1-z})\leq1+2r$, so
$\gamma B^2<1$. By \cref{eq:coded-point-bounds}, every coded point satisfies
$\abs{x_w(z)}\leq2/(1-\gamma)<5/2$, and we write $D=\set{x\in\bbC\colon\abs x<5/2}$.

Put $\psi(z;x,y)=(1-\lambda_2+\lambda_2 y-\lambda_1 x)^{-s}$ on $U\times D\times D$. Its base
is within $6\gamma\leq1/2$ of $1$. The bound $\abs{(\zeta^{-s})'}\leq3s$ on
$\abs{\zeta-1}\leq1/2$ therefore gives
\[
  \abs{\psi-1}\leq18s\gamma,
  \qquad
  \abs{\partial_x\psi},\abs{\partial_y\psi}\leq3s\gamma.
\]
The level-sum lemma \cref{thm:level-sums}, with $\rho=\gamma$ and $L=3s\gamma$, gives a
holomorphic function $\Lambda$ equal to $J(z)$ for real $z\in U$ and satisfying
\[
  \sup_U\abs{\Lambda-1}
  \leq18s\gamma+\frac{12B^2s\gamma}{1-\gamma B^2}\leq32s\gamma.
\]
Cauchy's estimate gives $\abs{J''(p)}\leq2\cdot32s\gamma/r^2\leq10^{11}s\gamma$.
\end{proof}

For $0<s\leq1/2$ and $0<p\leq1/50$,
\cref{thm:entropy-factor,thm:main-term-derivative,thm:remainder-derivative} combine to
\begin{equation}\label{eq:small-p-margin}
  p(1-\log p)\bigl(\kappa(p)-\abs{J'(p)}\bigr)
  \geq0.88-\Bigl(6p^{1-s}+3000\,s\,(3p/2)^{1/s}p^{-1-s}\Bigr)(1-\log p).
\end{equation}

\begin{proof}[Proof of \cref{thm:small-dimension-monotone}]
Let $0<s\leq2\cdot10^{-5}$. By \cref{eq:log-derivative-mean} and $J(p)\geq1$, it suffices to
prove $\kappa(p)>\abs{J'(p)}$ for $0<p<1/2$.

For $0<p\leq10^{-4}$, using $(3p/2)^{1/s}\leq(3p/2)^4$ and $s\leq1/2$, the subtracted term
in \cref{eq:small-p-margin} is at most $(6p^{1/2}+2\cdot10^4p^{5/2})(1-\log p)$. This increases
on $(0,1/e)$ and is less than $0.62$ at $p=10^{-4}$, proving $\kappa(p)>\abs{J'(p)}$ throughout
$(0,10^{-4}]$.

For $10^{-4}\leq p<1/2$, the symmetry $J(p)=J(1-p)$ gives $J'(1/2)=0$, so
\cref{thm:second-derivative-small-s} yields $\abs{J'(p)}\leq10^{11}s\gamma\,(1/2-p)$. Let $c$
be the constant of \cref{thm:entropy-factor} for $p_1=10^{-4}$. Since $s\gamma\to0$ as
$s\downarrow0$, there is $s_0\leq2\cdot10^{-5}$ such that $10^{11}s\gamma<c$ for all
$s\leq s_0$, and then $\kappa(p)>\abs{J'(p)}$ on $[10^{-4},1/2)$.

Thus the mean is strictly increasing on $(0,1/2]$ for $s\leq s_0$, and
\cref{thm:fixed-dimension-separation} gives mutual singularity.
\end{proof}

\begin{proof}[Proof of \cref{thm:critical-set-finite}]
For each fixed $0<s\leq1/2$, the error in
\cref{eq:small-p-margin} tends to zero as $p\downarrow0$, since $1/s-1-s\geq1/2$. Hence
\cref{eq:log-derivative-mean} and $J(p)\geq1$ give a positive derivative on $(0,p_0]$ for
some $p_0>0$. The remaining critical points lie in a compact subinterval of $(0,1/2]$ and
cannot accumulate by \cref{thm:fixed-dimension-separation}, so $D_s$ is finite.
\end{proof}

\end{document}